\documentclass[11pt]{article}

\usepackage{comment,url,algorithm,algorithmic,graphicx,subcaption,relsize}
\usepackage{amssymb,amsfonts,amsmath,amsthm,amscd,dsfont,mathrsfs,mathtools,nicefrac}
\usepackage{float,psfrag,epsfig,color,xcolor,url,hyperref}
\usepackage{epstopdf,bbm,mathtools,enumitem}
\usepackage[toc,page]{appendix}
\usepackage[mathscr]{euscript}

\usepackage[top=1in, bottom=1in, left=1in, right=1in]{geometry}

\def\balign#1\ealign{\begin{align}#1\end{align}}
\def\baligns#1\ealigns{\begin{align*}#1\end{align*}}
\def\balignat#1\ealign{\begin{alignat}#1\end{alignat}}
\def\balignats#1\ealigns{\begin{alignat*}#1\end{alignat*}}
\def\bitemize#1\eitemize{\begin{itemize}#1\end{itemize}}
\def\benumerate#1\eenumerate{\begin{enumerate}#1\end{enumerate}}

\newenvironment{talign*}
 {\csname align*\endcsname}
 {\endalign}
\newenvironment{talign}
 {\csname align\endcsname}
 {\endalign}

\def\balignst#1\ealignst{\begin{talign*}#1\end{talign*}}
\def\balignt#1\ealignt{\begin{talign}#1\end{talign}}

\let\originalleft\left
\let\originalright\right
\renewcommand{\left}{\mathopen{}\mathclose\bgroup\originalleft}
\renewcommand{\right}{\aftergroup\egroup\originalright}

\def\tinycitep*#1{{\tiny\citep*{#1}}}
\def\tinycitealt*#1{{\tiny\citealt*{#1}}}
\def\tinycite*#1{{\tiny\cite*{#1}}}
\def\smallcitep*#1{{\scriptsize\citep*{#1}}}
\def\smallcitealt*#1{{\scriptsize\citealt*{#1}}}
\def\smallcite*#1{{\scriptsize\cite*{#1}}}

\def\reals{\mathbb{R}} %

\def\<{\left\langle} %
\def\>{\right\rangle}

\DeclareSymbolFont{rsfs}{U}{rsfs}{m}{n}
\DeclareSymbolFontAlphabet{\mathscrsfs}{rsfs}

\newcommand{\grad}{\boldsymbol \nabla}

\ifdefined\nonewproofenvironments\else
\ifdefined\ispres\else
\newtheorem{theorem}{Theorem}
\newtheorem{lemma}[theorem]{Lemma}
\newtheorem{corollary}[theorem]{Corollary}
\newtheorem{definition}[theorem]{Definition}

\renewenvironment{proof}{\noindent\textbf{Proof.}\hspace*{.3em}}{\qed\\}
\newenvironment{proof-sketch}{\noindent\textbf{Proof Sketch}
  \hspace*{1em}}{\qed\bigskip\\}
\newenvironment{proof-idea}{\noindent\textbf{Proof Idea}
  \hspace*{1em}}{\qed\bigskip\\}
\newenvironment{proof-of-lemma}[1][{}]{\noindent\textbf{Proof of Lemma {#1}}
  \hspace*{1em}}{\qed\\}
\newenvironment{proof-of-theorem}[1][{}]{\noindent\textbf{Proof of Theorem {#1}}
  \hspace*{1em}}{\qed\\}
 \newenvironment{proof-of}[1][{}]{\noindent\textbf{Proof of {#1}}
  \hspace*{1em}}{\qed\\}
\newenvironment{proof-attempt}{\noindent\textbf{Proof Attempt}
  \hspace*{1em}}{\qed\bigskip\\}

\newenvironment{remark}{\noindent\textbf{Remark.}
  \hspace*{0em}}{\smallskip}%

\fi

\newtheorem{proposition}[theorem]{Proposition}

\newtheorem{assumption}{Assumption}
\fi
\makeatletter
\@addtoreset{equation}{section}
\makeatother

\hypersetup{
  colorlinks,
  linkcolor={red!50!black},
  citecolor={blue!50!black},
  urlcolor={blue!80!black}
}

\newcommand{\eq}[1]{\begin{align}#1\end{align}}

\renewcommand{\Pr}[1]{\mathbb{P}\left( #1 \right)}

\usepackage[normalem]{ulem}
\usepackage{soul}

\usepackage{graphicx,xcolor,caption,enumitem}
\usepackage{wrapfig}

\usepackage{mathtools}

\renewcommand {\bf}{\mathbf}
\newcommand {\bm}{\boldsymbol}

\newcommand {\trsp}{^\mathsf {T}}

\newcommand {\Exp}{ \mathbb E }
\newcommand {\sg}{\operatorname{sgn}}
\renewcommand {\Pr}{ \mathbb P }
\renewcommand{\le}{\leqslant}
\renewcommand{\ge}{\geqslant}
\renewcommand{\d}{\mathrm d}

\newcommand{\step}{\gamma}
\newcommand{\noise}{\bxi}
\newcommand{\bx}{\boldsymbol x}
\newcommand{\barx}{\overline{\bm x}}
\newcommand{\bxi}{\boldsymbol \xi}
\newcommand{\sF}{\mathcal{F}}
\newcommand{\dist}{ \nu}

\newcommand{\sphere}{S}
\newcommand{\cone}{\mathbb S}

\usepackage{theoremref}
\usepackage{bbm}
\usepackage{natbib}
\mathtoolsset{showonlyrefs}

\makeatletter
\def\@fnsymbol#1{\ensuremath{\ifcase#1\or *\or \dagger\or \ddagger\or
   \mathsection\or \|\or **\or \dagger\dagger
   \or \ddagger\ddagger \else\@ctrerr\fi}}
    \makeatother

\begin{document}

\title{Convergence Rates of Stochastic Gradient Descent \\ under Infinite Noise Variance}

 \author{\!\!\!\!\!
 Hongjian Wang\thanks{
  Department of Computer Science and Technology at Tsinghua University \texttt{hongjian.wang@aol.com}
 },
 \  
 Mert G\"{u}rb\"{u}zbalaban\thanks{Department of Management Science and Information Systems at Rutgers Business School \texttt{mg1366@rutgers.edu}},
 \  
 Lingjiong Zhu\thanks{Department of Mathematics at Florida State University \texttt{zhu@math.fsu.edu}},
 \  
 Umut \c{S}im\c{s}ekli\thanks{INRIA - D\'{e}pt.\ d'Informatique de l'\'{E}cole Normale Sup\'{e}rieure - PSL Research University \texttt{umut.simsekli@inria.fr}},
 \ 
 Murat A. Erdogdu\thanks{
  Department of Computer Science and Department of Statistical Sciences at
  University of Toronto, and Vector Institute \texttt{erdogdu@cs.toronto.edu}
 }
}

\maketitle

\begin{abstract}
Recent studies have provided both empirical and theoretical evidence illustrating that heavy tails can emerge in stochastic gradient descent (SGD) in various scenarios. Such heavy tails potentially result in iterates with diverging variance, which hinders the use of conventional convergence analysis techniques that rely on the existence of the second-order moments. In this paper, we provide convergence guarantees for SGD under a state-dependent and heavy-tailed noise with a potentially infinite variance, for a class of strongly convex objectives. In the case where the $p$-th moment of the noise exists for some $p\in [1,2)$, we first identify a condition on the Hessian, coined `$p$-positive (semi-)definiteness', that leads to an interesting interpolation between positive semi-definite matrices ($p=2$) and diagonally dominant matrices with non-negative diagonal entries ($p=1$). Under this condition, we then provide a convergence rate for the distance to the global optimum in $L^p$. Furthermore, we provide a generalized central limit theorem, which shows that the properly scaled Polyak-Ruppert averaging converges weakly to a multivariate $\alpha$-stable random vector.
Our results indicate that even under heavy-tailed noise with infinite variance, SGD can converge to the global optimum without necessitating any modification neither to the loss function or to the algorithm itself, as typically required in robust statistics.
We demonstrate the implications of our results to applications such as linear regression and generalized linear models subject to heavy-tailed data.
\end{abstract}

\section{Introduction}

We consider the unconstrained minimization problem
\begin{align}\label{eq:min-f}
  \underset{\bx\in\reals^d}{\text{minimize}}\ f(\bx),
\end{align}
using the stochastic gradient descent (SGD) algorithm. Initialized at $\bx_0 \in\reals^d$,
the SGD algorithm is given by the iterations,
\begin{equation} \label{eq:sgd}
  \bx_{t+1} = \bx_{t} - \step_{t+1} \left( \grad f(\bx_{t}) + \bxi_{t+1} (\bx_t)\right),\ \ t= 0,1,2,...
\end{equation}
where $\{\step_t\}_{t\in \mathbb N^+}$ denotes the step-size sequence,
and  $\{\noise_t\}_{t\in \mathbb N^+}$ is a martingale difference sequence adapted to a filtration $\{\sF_t\}_{t\in \mathbb N}$, characterizing the noise in the gradient (the sequence $\{\bx_t\}_{t\in \mathbb N}$ is also adapted to
the same filtration, if we assume $\bx_0$ is $\sF_0$-measurable).
Our focus is on the case where the noise is state dependent, 
and its variance is infinite, i.e., $\Exp{\big[\|\bxi_t\|_2^2\big]}=\infty$.

Many problems in modern statistical learning can be written in the form \eqref{eq:min-f},
where
$f(\bx)$ typically corresponds to the population risk, that is, $f(\bx) \coloneqq \Exp_{z\sim \dist}[\ell(\bx,z)]$ for a given loss function $\ell$ and an unknown data distribution $\dist$.
In practice, one observes independent and identically distributed (i.i.d.) samples $z_i\sim \dist$ for $i\in[n]$,
and estimates the population gradient $\grad f(\bx)$ with a noisy gradient at each iteration,
which is based on an empirical average over a subset of the samples $\{z_i\}_{i\in[n]}$.
Due to its simplicity, superior generalization performance, and well-understood theoretical guarantees,
SGD has been the method of choice for minimization problems arising in statistical machine learning.

Starting from the pioneering works of~\cite{robbins1951stochastic, chung1954stochastic, sacks1958asymptotic, fabian1968asymptotic, ruppert1988efficient, shapiro1989asymptotic,polyak1992acceleration},
theoretical properties of the SGD algorithm and its variants
have been receiving a growing attention under different scenarios.
Recent works, for example~\cite{tripuraneni2018averaging, su2018statistical, duchi2016local, toulis2017asymptotic,fang2018online, anastasiou2019normal, yu2020analysis}
establish convergence rates for SGD in various settings, and build on the analysis of~\cite{polyak1992acceleration} to prove a central limit theorem (CLT) for the
Polyak-Ruppert averaging,
which leads to novel methodologies to compute
confidence intervals using SGD. 
However, a recurring assumption in this line of work is the finite noise variance,
which may be violated frequently in modern frameworks.

Heavy-tailed behavior in statistical methodology may naturally arise from the underlying model,
or through the iterative optimization algorithm used during model training.
In robust statistics, one often encounters heavy-tailed noise behavior in data, 
which in conjunction with standard loss functions leads to infinite noise variance in SGD.
Very recently, heavy-tailed behavior is shown to emerge from the multiplicative noise in SGD,
when the step-size is large and/or the batch-size is small~\citep{hodgkinson2020multiplicative,gurbuzbalaban2020heavy}.
On the other hand, there is strong empirical evidence 
in modern machine learning that
the gradient noise often exhibits a heavy-tailed behavior, which indicates an infinite variance. For example, this is observed in fully connected and convolutional neural networks~\citep{simsekli19a,gurbuzbalaban2020fractional} as well as recurrent neural networks~\citep{zhang2019adam}. 
Thus, understanding the behavior of SGD under infinite noise variance becomes extremely important for at least two reasons. A \emph{computational complexity reason:}
modern machine learning and robust statistics frameworks lead to heavy-tailed behavior in SGD;
thus, understanding the performance of this algorithm in terms of precise convergence rates as well as the required conditions on the step-size sequence as a function of the `heaviness' of the tail become crucial in this setup.
A \emph{statistical reason:}
many inference methods that rely on Polyak-Ruppert averaging utilize
a CLT that holds under finite noise variance (see e.g.
online bootstrap and variance estimation approaches~\citep{fang2018online, su2018statistical,chen2020statistical}).
Using the same methodology in the aforementioned modern framework (under heavy-tailed noise) will ultimately result in incorrect confidence intervals, jeopardizing the statistical procedure.
Thus, establishing the limit distribution in this setting is of great importance.

In this work, we study the behavior of the SGD algorithm with diminishing step-sizes for a class of strongly convex problems when the noise variance is infinite. We establish the convergence rates of the SGD iterates towards the global minimum, and identify a sufficient condition on the Hessian of $f$, which interpolates positive semi-definiteness and diagonal dominance with non-negative diagonal entries.
We further study the Polyak-Ruppert averaging of the SGD iterates,
and show that the limit distribution is a multivariate $\alpha$-stable distribution. 
We illustrate our theory on linear regression and generalized linear models, demonstrating how to verify the conditions of our theorems. 
Perhaps surprisingly, our results show that even under heavy-tailed noise with infinite variance, SGD with diminishing step-sizes can converge to the global optimum without requiring any modification neither to the loss function or to the algorithm itself, as opposed to the conventional techniques used in robust statistics~\citep{huber2004robust}.
Finally, we argue that our work has potential implications in constructing confidence intervals in
the infinite noise variance setting.

\section{Preliminaries and Technical Background}
\par \textbf{Notational Conventions.}
By $\mathbb N$, $\mathbb N^+$ and $\mathbb R$ we denote the set of non-negative integers, positive integers, and real numbers respectively. For $m\in\mathbb N^+$, we define $[m]=\{1, \ldots, m\}$. We use italic letters (e.g. $x, \xi$) to denote scalars and scalar-valued functions, bold face italic letters (e.g. $\bm x, \bm \xi$) to denote vectors and vector-valued functions, and bold face upper case letters (e.g. $\bf A$) to denote matrices. We use $| \bm x |$ and $\| \bm x \|_p$ to denote the 2-norm and $p$-norm of a vector $\bm x$; $\|\bf A\|$ and $\| \bf A \|_p$ the operator 2-norm and operator $p$-norm of a matrix $\bf A$. The transpose of a matrix $\bf A$ and a vector $\bm x$ (viewed as a matrix with 1 column) are denoted by $\bf A\trsp$ and $\bm x \trsp$. 
If $\{\bf A_i\}_{i\in\mathbb N}$ is a sequence of matrices and $k>\ell$, the empty product $\prod_{i=k}^\ell \bf A_i$ is understood to be the identity matrix $\bf I$.
The asymptotic notations are defined in the usual way: for two sequences of real numbers $\{a_t\}_{t\in\mathbb N}$, $\{b_t\}_{t\in\mathbb N}$, we write $a_t = \mathcal O(b_t)$ if $\limsup_{t\to\infty} |a_t|/|b_t| < \infty$, $a_t = o(b_t)$ if $\limsup_{t\to\infty} |a_t|/|b_t| = 0$, $a_t = \Theta(b_t)$ if both $a_t = \mathcal O(b_t)$ and $b_t = \mathcal O(a_t)$ hold, and $a_t \asymp b_t$ if $\lim_{t\to\infty} |a_t|/|b_t|$ exists and is in $(0,\infty)$. If $a_t = \mathcal{O}(b_t t^\varepsilon)$ for any $\varepsilon>0$, we say $a_t = \tilde{\mathcal{O}}(b_t)$. Sufficiently large or sufficiently small positive constants whose values do not matter are written as $C, C_0, C_1, \ldots$, sometimes without prior introduction.
If $\bm X_1, \bm X_2, \ldots$ is a sequence of random vectors taking value in $\mathbb R^n$ and $\mu$ is a probability measure on $\mathbb R^n$, we write $\bm X_t \xrightarrow[t\to \infty]{\mathcal D} \mu $
if $\{\bm X_t\}_{t\in\mathbb N^+}$ converges in distribution (also called `converges weakly') to $\mu$.

\vspace{4pt}

\noindent\textbf{Stochastic Approximation.}
In the SGD recursion \eqref{eq:sgd}, we can replace $\grad f$ with arbitrary continuous function $\bm R:\mathbb R^n \to \mathbb R^n$, and consider the same iterations that stochastically approximate the zero $\bm x^*$ of $\bm R$,
\begin{equation}\label{eqn:SA}
    \bx_{t+1} = \bx_{t} - \step_{t+1} \left( \bm R(\bx_{t}) + \bxi_{t+1} (\bx_t)\right).
\end{equation}
This is called the \emph{stochastic approximation} process \citep{robbins1951stochastic}, which is a predecessor of stochastic gradient descent and describes a larger family of iterative algorithms (\cite{kushner2003stochastic}, Chapters 2 and 3). Theoretical investigation into recursion \eqref{eqn:SA} has been active ever since its invention, especially under finite noise variance assumption: \cite{robbins1951stochastic} prove the recursion \eqref{eqn:SA} can lead to the $L^2$ convergence $\lim_{t\to\infty}\Exp[|\bm x_t - \bm x^*|^2]=0$; \cite{chung1954stochastic} further calculates an exact convergence rate (see \eqref{eqn:chung-fv-rate}); \cite{blum1954approximation} presents an elegant proof that the convergence of $\bx_t$ to $\bm x^*$ can hold almost surely. The asymptotic distribution of \eqref{eqn:SA} is also the discovery of \cite{chung1954stochastic}, the Theorem 6 of which states that $\gamma_t^{-1/2}(\bx_t - \bx^*)$ converges weakly to a normal distribution; \cite{polyak1992acceleration} and \cite{ruppert1988efficient} independently introduce the concept of `averaging the iterates',
\begin{equation}
\label{eqn:sa_intro}
    \overline{\bx}_t = \frac{\bx_0 + \ldots + \bx_{t-1}}{t},
\end{equation}
showing the striking result that $\sqrt t (\overline{\bx}_t - \bx^*)$ converges weakly to a fixed normal distribution \emph{regardless of the choice of the step-size $\{\gamma_t\}_{t\in \mathbb N^+}$}.
Recently, optimization algorithms that can handle heavy-tailed $\bxi$ have been proposed \citep{davis2019low,nazin2019algorithms,gorbunov2020stochastic}; however, they still rely on a \emph{uniformly} bounded variance assumption, hence do not cover our setting.

\par Compared with the copious collection of theoretical studies on stochastic approximation with finite variance mentioned above, papers on \emph{infinite} variance stochastic approximation are extremely scarce, and we shall summarize the only four papers known to us to the best of our knowledge. \cite{krasulina1969stochastic} is the first to consider such problems, proving almost sure and $L^p$ convergence for the one-dimensional stochastic approximation process without variance. The weak convergence of the iterates (without averaging) $t^{1/\alpha}(\bx_t - \bx^*)$ is also considered by \cite{krasulina1969stochastic}, but only for the fastest-decaying step-size $\gamma_t = 1/t$. \cite{goodsell1976almost} discuss how $\bx_t \to \bx^*$ in probability can imply $\bx_t \to \bx^*$ almost surely, when no finite variance is assumed, and \cite{li1994almost} provides a necessary and sufficient condition for almost sure convergence of $\bx_t \to \bx^*$, stating that faster-decaying step-size $\gamma_t = o(t^{-1/p})$ is required when moments of lower orders $\Exp[|\bm \xi_t|^p]$ are not in place. \cite{anantharam2012stochastic} show that although step-size that decays slower than $t^{-1/p}$ cannot yield almost sure convergence, $L^p$ convergence can still hold under what they call the `stability assumption', but their analysis technique provides no convergence rate.
Recently, \cite{csimcsekli2019heavy} and \cite{zhang2019adam} considered SGD with heavy-tailed $\bxi$ having \emph{uniformly bounded} $p$-th order moments. Besides not being able to handle state-dependent noise due to this uniform moment condition, \cite{csimcsekli2019heavy} imposed further conditions on $\bm R = \nabla f$ such as global H\"{o}lder continuity for a non-convex $f$, whereas \cite{zhang2019adam} modified SGD with `gradient clipping', in order to be able to compensate the effects of the heavy-tailed noise.

\par Finally, we shall mention that a class of stochastic recursions similar to \eqref{eqn:SA} have been considered in the dynamical systems theory \citep{mirek2011heavy,buraczewski2012asymptotics,buraczewski2016stochastic}, for which generalized central limit theorems with $\alpha$-stable limits have been proven. However, such techniques typically require $\bm R$ to be (asymptotically) linear and the step-sizes to be constant as they heavily rely on the theory of time-homogeneous Markov processes. Hence, their approach does not readily generalize to the setting of our interest, i.e., non-linear $\bm R$ and diminishing step-sizes, where the latter is crucial for ensuring convergence towards the global optimum.

\vspace{4pt}

\noindent\textbf{Stable Distributions.} 
In probability theory, a random variable $X$
is \emph{stable} if its distribution is non-degenerate
and satisfies the following property: Let $X_{1}$ and $X_{2}$ be independent copies of $X$.
Then, for any constants $a,b>0$, the random variable
$aX_{1}+bX_{2}$ has the same distribution as $cX+d$ 
for some constants $c>0$ and $d$ (see e.g. \citep{ST1994}).
The stable distribution is also referred to as
the $\alpha$-stable distribution, first proposed
by \cite{paul1937theorie}, where $\alpha \in (0,2]$ denoting the stability parameter. The case $\alpha=2$ corresponds to the normal distribution,
and the variance under this distribution is undefined for any $\alpha<2$.
The multivariate $\alpha$-stable distribution dates back to \cite{Feldheim}, 
which is a multivariate generalization of the univariate $\alpha$-stable distribution, which is also uniquely characterized 
by its characteristic function. 
In particular, an $\mathbb{R}^{d}$-valued random vector $X$ 
has a multivariate $\alpha$-stable distribution, denoted
as $\bm X\sim\mathcal{S}(\alpha,\Lambda,\delta)$ if
the joint characteristic function of $\bm X$ is given by
\begin{equation}\label{char:alpha:stable:multi}
\Exp\left[\exp\left(i\bm u\trsp\bm X\right)\right]
=\exp\Big\{-\int_{\bm s\in S_{2}}(|\bm u\trsp\bm s|^{\alpha}+i\nu(\bm u\trsp\bm s,\alpha))\Lambda(\d\bm s)+i\bm u\trsp\delta\Big\},
\end{equation}
for any $\bm u\in\mathbb{R}^{d}$, and $0<\alpha\le 2$.
Here, $\alpha$ is the tail-index, $\Lambda$ is a finite measure on $S_{2}$ known as the spectral measure, $\bm \delta\in\mathbb{R}^{d}$ is a shift vector, 
and $\nu(y,\alpha):=-\sg(y)\tan(\pi\alpha/2)|y|^{\alpha}$
for $\alpha\neq 1$ and $\nu(y,\alpha):=(2/\pi)y\log|y|$ for $\alpha=1$ for any $y\in\mathbb{R}$, 
and $S_{2}$ denotes the unit sphere in $\mathbb{R}^{d}$; i.e. $S_{2}=\{\bm s\in\mathbb{R}^{d}:\Vert\bm s\Vert_{2}=1\}$. 
Stable distributions also appear as the limit
in the Generalized Central Limit Theorem (GCLT) \citep{gnedenko1968limit}, which states that for a sequence of i.i.d.\ random variables whose distribution
has a power-law tail with index $0< \alpha <2$, 
the normalized sum converges to an $\alpha$-stable distribution as the number of summands grows.
\vspace{4pt}

\noindent\textbf{Domains of Normal Attraction of Stable Distributions.}
Let $\bm X_{1},\bm X_{2},\ldots,\bm X_{n}$ be an i.i.d. sequence
of random vectors in $\mathbb{R}^{d}$ with a common distribution function $F(\bm x)$.
If there exists some constant $a>0$ and a sequence $b_{n}\in\mathbb{R}^{d}$ such that
\begin{equation}\label{eqn:a}
\frac{\bm X_{1}+\cdots+\bm X_{n}}{an^{1/\alpha}}-b_{n}
\xrightarrow[n\to \infty]{\mathcal D} \mu,
\end{equation}
then
$F(\bm x)$ is said to belong to the \emph{domain of normal attraction} of the law $\mu$, 
and $\alpha$ is the characteristic exponent of the law $\mu$ \cite[page 181]{gnedenko1968limit}.
If $\mu$ is an $\alpha$-stable
distribution, then we say $F(\bm x)$ is said to belong to the domain of normal attraction of an $\alpha$-stable distribution.
For example, the Pareto distribution
belongs to the domain of normal attraction of an $\alpha$-stable law.
In Appendix~\ref{sec:more-prelim}, we provide more details as well as a sufficient and necessary condition
for being in the domain of normal attraction of an $\alpha$-stable law.

\section{Convergence of SGD under Heavy-tailed Gradient Noise}
\label{sec:convergence}
In this section, we identify sufficient conditions for the convergence of SGD under heavy tailed gradient noise, and derive the explicit rate estimates.
In the standard setting when the noise variance is finite,
some notion of positive definite Hessian assumption is frequently utilized to achieve convergence (see for example ~\cite{polyak1992acceleration,tripuraneni2018averaging, su2018statistical, duchi2016local, toulis2017asymptotic,fang2018online,anastasiou2019normal}).
When the noise variance is infinite, but it has finite $p$-th moment for $p \in [1,2)$,
one requires a stronger notion of positive definiteness on the Hessian,
which leads to an interesting interpolation between the positive semi-definite cone (as $p\to 2$), 
and the cone of diagonally dominant matrices with non-negative diagonal entries ($p=1$).

\subsection{$p$-Positive Definiteness}

First, we introduce a signed power of vectors which will be used when defining a family of matrices.

\begin{wrapfigure}{r}{0.37\textwidth}
\centering
\includegraphics[width=0.37\textwidth]{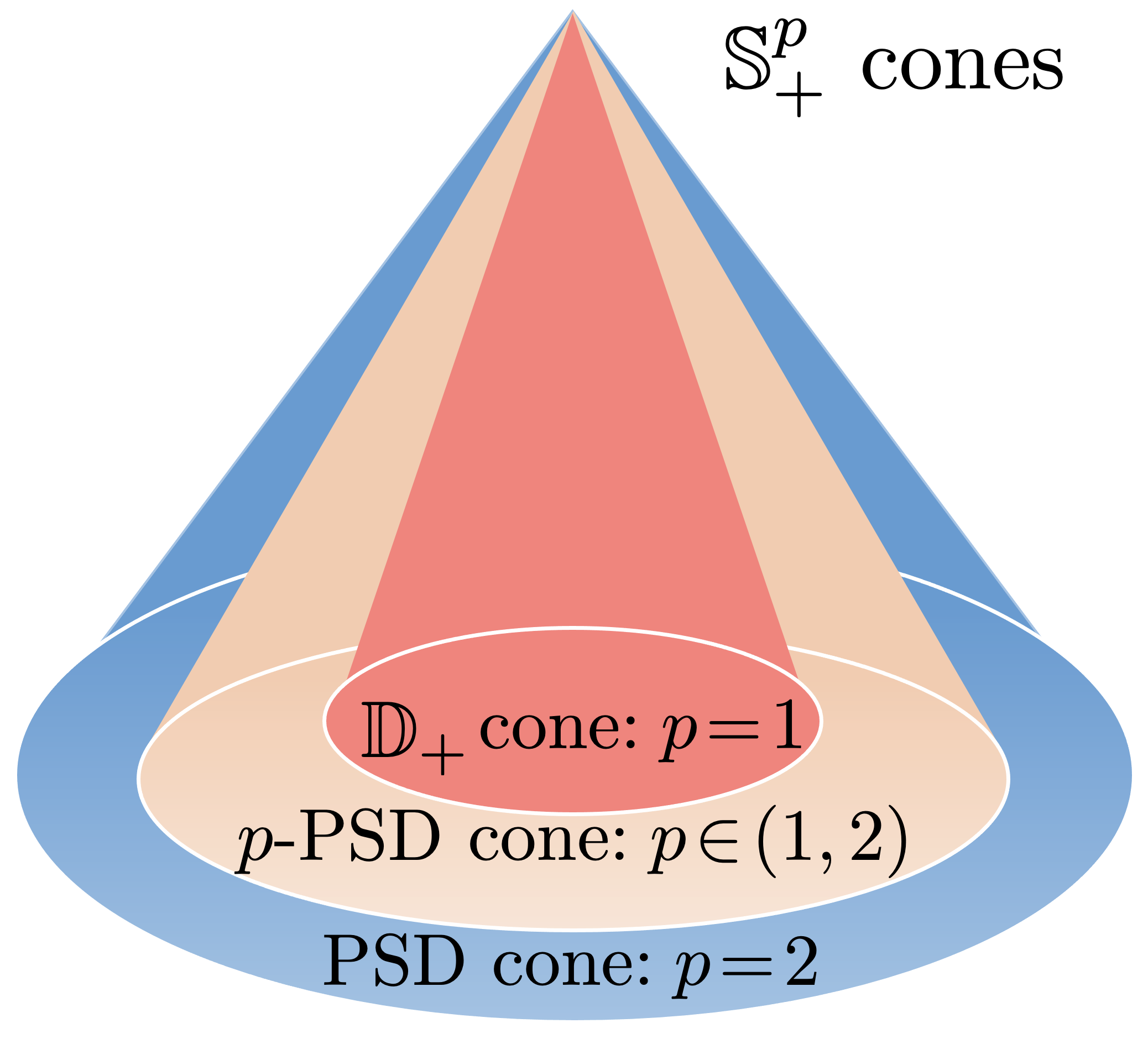}
\vspace{-15pt}
\caption{\small\!\! Geometry of $p$-PSD matrices.
$\mathbb D_+$ cone refers to the cone of diagonally dominant matrices with non-negative diagonal entries. Their inclusion relationship is given in Propositions~\ref{rmk:1pd=udd} and \ref{rmk:ppd->2pd}.} 
\label{fig:ppos}
\vspace{-15pt}
\end{wrapfigure}
For $\bm v = (v^1, \ldots, v^n) \trsp \in \mathbb R^n$ and $q \ge 0$, we let
\begin{equation}\label{eqn:signed-power}
    \bm v^{\langle q \rangle} = \left(\sg\left(v^1\right) \left|v^1\right|^q, \ldots, \sg\left(v^n\right)\left|v^n\right|^q  \right) \trsp. 
\end{equation}
Denoting the $n$-dimensional $\ell_p$ unit sphere with $\sphere_p = \{\bm v \in \reals^n  : \|\bm v\|_p=1 \}$,
and the set of $n\times n$ symmetric matrices with $\mathbb S$,
we now define the following subset of $\mathbb S$.

\begin{definition}[$p$-positive definiteness]\thlabel{def:p+}
Let $p\ge 1$ and $\bf Q$ be a symmetric matrix. We say that $\bf Q$ is $p$-positive definite if 
for all $\bm v \in \sphere_p$, $\bm v \trsp \bf Q \bm v^{\langle p - 1 \rangle} > 0$.
Similarly, we call $\bf Q$ $p$-positive semi-definite if
for all $\bm v \in \sphere_p$, $\bm v \trsp \bf Q \bm v^{\langle p - 1 \rangle} \ge 0$.
\end{definition}

It is not hard to see that the set of $p$-positive semi-definite matrices ($p$-PSD) defines a closed pointed cone, which we denote by $\mathbb S^p_+$, 
with interior as the set of $p$-positive definite matrices ($p$-PD),
denoted by $\cone^p_{++}$.
We are mainly interested in the case $1\le p<2$. Note that
$\cone^2_{+}$ coincides with the standard PSD cone,
and we show in Section~\ref{sec:p+} that $\cone^1_{+}$ is exactly the cone of diagonally dominant matrices with non-negative diagonal entries,
denoted by $\mathbb D_+$. For any $p \in [1,2]$, these cones satisfy the following
\eq{
\mathbb D_+ = \cone^1_+ \subseteq \cone^p_+ \subseteq \cone^2_+.
}
Figure~\ref{fig:ppos} is an hypothetical illustration of the relationships between these cones.

For a uniform version of Definition~\ref{def:p+}, we recall that every operator norm $\|\cdot\|_p$ induces the same topology on the set of $n$-dimensional matrices, which is just the usual topology on $\mathbb R^{n\times n}$. Further, the set of symmetric matrices $\cone$, as the set of zeros of the continuous function $\bf X \mapsto \bf X - \bf X \trsp$, is a closed set. Hence for a set $\mathcal M \subseteq \cone$, denoting its topological closure with $\overline{\mathcal{M}}$, we also have $\overline{\mathcal{M}} \subseteq\cone$. We are interested in the case where $\mathcal{M}$ is bounded.

\begin{definition}[uniform $p$-PD]\thlabel{def:unfmp+}
Let $p\ge 1$ and $\mathcal M \subset \mathbb S$ be a non-empty set of symmetric matrices. We say that $\mathcal M$ is uniformly $p$-PD if 
for all $\bf Q \in \overline{\mathcal M}$, 
we have $\bf Q \in \cone^p_{++}$.
\end{definition}
\par Notice that $\mathcal M$ is uniformly 2-PD if and only if the eigenvalues of the symmetric matrices in the set $\mathcal{M}$ are all lower bounded by a positive real number. Notice also that a finite subset of symmetric matrices is uniformly $p$-PD if and only if each element of the set is $p$-PD.

$p$-PSD cone emerges naturally when analyzing SGD algorithm in the heavy-tailed setting, interpolating between the standard PSD cone
to the cone of diagonally dominant matrices with non-negative diagonal entries. To the best of our knowledge, we are the first to study such families of matrices and their application in stochastic optimization. For further details about these cones, we refer interested reader to Appendix~\ref{sec:p+}.

We make the following uniform smoothness and the curvature assumptions on the Hessian of the objective function.
\begin{assumption}\label{as:ppos}
The set of matrices $\{ \grad^2 f(\bx): \bm x \in \mathbb R^n \}$ is bounded and uniformly $p$-PD.
\end{assumption}

\subsection{Rate of Convergence in $L^p$}

We fix a probability space $(\Omega, \mathcal F, \Pr)$ with filtration $\{\mathcal F_t\}_{t\in \mathbb N}$, and make the following assumption on the gradient noise sequence.

\begin{assumption}\label{as:noise}
Let $\bx_0$ be $\sF_0$-measurable. The gradient noise sequence $\{\bxi_t\}_{t\in \mathbb N^+}$ is given as 
\begin{equation}\label{eqn:noise-decomp}
    \bxi_{t+1}(\bx_t) =  \bm m_{t+1}(\bx_t)+ \bm \zeta_{t+1},
\end{equation}
 where $\{ \bm \zeta_t \}_{t\in \mathbb N^+}$ is an i.i.d. sequence with $\Exp[\bm \zeta_t] = 0$,
and $\Exp[|\bm \zeta_t|^p] < \infty$ for some $p$, and $\{ \bm m_t \}_{t\in \mathbb N^+}$ is a martingale difference sequence, and both sequences are adapted
 to the filtration 
 $\{\mathcal F_t\}_{t\in \mathbb N}$.
 
 Further, the state dependent component of the noise satisfies, for some $K>0$,
 \begin{equation}\label{eqn:state-dep-var}
     \Exp\left[ \left|\bm m_{t+1}(\bx_t)\right|^2  \mid  \mathcal F_t\right] \le K\left(1+|\bx_t|^2\right).
 \end{equation}
\end{assumption}

We note that the above assumptions also imply that both 
the gradient noise sequence $\{\bxi_t\}_{t\in \mathbb N^+}$ as well as the SGD iterates $\{\bx_t\}_{t\in \mathbb N}$
are adapted to the same filtration $\{\mathcal F_t\}_{t\in \mathbb N}$.
 We call $\bm m_t$ the \emph{state-dependent component} of the gradient noise,
 which naturally has a state dependent conditional second moment. The variance of this component of the noise can be arbitrarily large depending on the state; yet, 
 for a given state $\bx_t$, it is guaranteed to be finite.
 The \emph{heavy-tailed} noise behavior is due to $\bm \zeta_t$, which may have an infinite variance for $p<2$ (i.e., the second moment is not undefined).
 We point out that such a decomposition \eqref{eqn:noise-decomp} arises in
many instances of stochastic approximation subject to heavy-tailed noise with long-range dependencies and has been considered in the literature, see e.g. \citet{polyak1992acceleration} and \citet{anantharam2012stochastic}. We shall show in Section~\ref{sec:examples} that such noise structure arises in practical applications such as linear regression and generalized linear models subject to heavy-tailed data.

Our first result provides a convergence rate in $L^p$, for the SGD algorithm to the unique minimizer $\bx^*$ of the objective function $f$ with uniformly $p$-PD Hessian, when the noise sequence $\{\bm \xi_t\}_{t \in \mathbb N^+}$ has potentially an infinite variance. 
\begin{theorem}\thlabel{thm:rate-lp} Suppose Assumptions~\ref{as:ppos} and \ref{as:noise} hold for some $1 < p\le 2$. For step-size satisfying $\gamma_t \asymp t^{-\rho}$ with $\rho \in (0, 1)$,
the error of the SGD iterates $\{ \bx_t\}_{t\in \mathbb N}$ from the minimizer $\bx^*$ satisfies
\begin{equation}\label{eqn:lpweak}
\Exp\left[|\bx_t - \bx^*|^p\right] = \mathcal{O}\left(t^{ -\rho (p-1)  }  \right).
\end{equation}
Consequently, we have $\sup_{t\in \mathbb N^+}\Exp[|\bm \xi_t|^p] < \infty$.
\end{theorem}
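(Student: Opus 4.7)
I would derive a one-step Lyapunov inequality for $u_t := \E[\|\bm e_t\|_p^p]$ with $\bm e_t := \bm x_t - \bm x^*$; the $\ell_p$ norm is the natural companion of the $p$-PD hypothesis, and equivalence of norms on $\R^n$ transfers the resulting bound to the Euclidean norm used in the statement. Using the mean-value theorem, write $\nabla f(\bm x_t) = \bf H_t \bm e_t$ with $\bf H_t := \int_0^1 \nabla^2 f(\bm x^* + s\bm e_t)\, \d s$, so the recursion becomes
\begin{equation}
\bm e_{t+1} = \bm e_t - \gamma_{t+1}\big(\bf H_t \bm e_t + \bm \xi_{t+1}(\bm x_t)\big).
\end{equation}
As a convex integral of Hessians, $\bf H_t$ lies in the closed convex hull of $\{\nabla^2 f(\bm x):\bm x\in\R^n\}$; convexity of $\cone^p_{++}$ combined with Assumption~\ref{as:ppos} then yields a uniform modulus $\mu>0$, independent of $t$, such that $\bm v^\T \bf H_t \bm v^{\langle p-1\rangle} \ge \mu\,\|\bm v\|_p^p$ for every $\bm v\in\R^n$.

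The key analytic tool is the coordinate-wise von~Bahr--Esseen expansion $|a+b|^p \le |a|^p + p\,\sg(a)|a|^{p-1}b + K_p |b|^p$, valid for $1\le p\le 2$ with a constant $K_p$ depending only on $p$; summed over coordinates this gives
\begin{equation}
\|\bm a + \bm b\|_p^p \le \|\bm a\|_p^p + p\,\bm a^{\langle p-1\rangle\T}\bm b + K_p \|\bm b\|_p^p.
\end{equation}
Applying this with $\bm a = \bm e_t$ and $\bm b = -\gamma_{t+1}(\bf H_t\bm e_t + \bm \xi_{t+1})$, taking conditional expectation on $\sF_t$ so that the cross term with $\bm \xi_{t+1}$ vanishes by the martingale property, and invoking the $p$-PD lower bound on $\bm e_t^{\langle p-1\rangle \T}\bf H_t\bm e_t$ produces
\begin{equation}
\E\big[\|\bm e_{t+1}\|_p^p \,\big|\,\sF_t\big] \le (1 - p\mu \gamma_{t+1})\,\|\bm e_t\|_p^p + K_p \gamma_{t+1}^p\, \E\big[\|\bf H_t \bm e_t + \bm \xi_{t+1}\|_p^p\,\big|\,\sF_t\big].
\end{equation}

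To close the recursion, I would bound the $p$-th moment of the driving term via Assumption~\ref{as:noise}: Jensen's inequality (with concavity of $x\mapsto x^{p/2}$) upgrades $\E[|\bm m_{t+1}|^2\mid\sF_t]\le K(1+|\bm x_t|^2)$ to $\E[|\bm m_{t+1}|^p\mid\sF_t]\le C(1+|\bm e_t|^p)$, while $\E[|\bm \zeta_{t+1}|^p]$ is a finite constant; combined with the boundedness of $\bf H_t$ and norm equivalence on $\R^n$, this yields $\E[\|\bf H_t \bm e_t + \bm \xi_{t+1}\|_p^p\mid\sF_t]\le C_2(1+\|\bm e_t\|_p^p)$. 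Taking full expectations,
\begin{equation}
u_{t+1}\le \big(1 - p\mu\gamma_{t+1} + C_3 \gamma_{t+1}^p\big)u_t + C_3 \gamma_{t+1}^p.
\end{equation}
Because $p>1$ and $\gamma_t\asymp t^{-\rho}$ with $\rho\in(0,1)$, we have $\gamma_{t+1}^p = o(\gamma_{t+1})$, so for all large $t$ this reduces to the canonical form $u_{t+1}\le (1-a\gamma_{t+1})u_t + b\gamma_{t+1}^p$. A standard Chung-type lemma then yields $u_t = \mathcal O(\gamma_t^{p-1}) = \mathcal O(t^{-\rho(p-1)})$, and norm equivalence transfers this to $\E[|\bm e_t|^p]$. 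The supremum claim $\sup_t \E[|\bm \xi_t|^p]<\infty$ is immediate from the noise bound and the boundedness of $\{u_t\}$.

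The main obstacles I anticipate are two structural ingredients that must be checked carefully rather than any deep step: (i) preservation of uniform $p$-PD under convex integration, which reduces to showing that $\cone^p_{++}$ is an open convex cone and that the uniform infimum in Definition~\ref{def:unfmp+} propagates through convex combinations so that $\mu$ is genuinely $t$-independent; and (ii) the one-sided expansion above, where the non-smoothness of $\bm v\mapsto\bm v^{\langle p-1\rangle}$ on coordinate hyperplanes for $p<2$ rules out a naive Taylor expansion and forces the use of the inequality with the remainder term $K_p\|\bm b\|_p^p$ rather than $O(\|\bm b\|^2)$. Getting the two constants $\mu$ and $K_p$ genuinely uniform becomes especially delicate as $p\to 1^+$, where the $\ell_p$ geometry degenerates most severely.
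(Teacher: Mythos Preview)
Your proposal is correct and follows essentially the same route as the paper: the $\ell_p^p$ Lyapunov function, the one-sided expansion $\|\bm a+\bm b\|_p^p\le\|\bm a\|_p^p+p\,\bm b^\T\bm a^{\langle p-1\rangle}+K_p\|\bm b\|_p^p$ (the paper's \thref{lem:vecexpandp} with $K_p=4$), the martingale cancellation of the noise cross term, and the Chung--Fabian recursion lemma are exactly the ingredients the paper uses. The only cosmetic variation is in how $p$-PD is invoked: the paper first groups the deterministic drift with $\bm e_t$ and bounds $\|(\bf I-\gamma_{t+1}\nabla^2 f(\bm x_t^\flat))\bm e_t\|_p^p$ via the operator-norm characterization $\|\bf I-\gamma\bf Q\|_p^p\le 1-L\gamma$, whereas you expand around $\bm e_t$ and invoke the quadratic-form characterization $\bm v^\T\bf H_t\bm v^{\langle p-1\rangle}\ge\mu\|\bm v\|_p^p$---the two are equivalent by \thref{thm:equivalent-unfmp+}, and your worry (i) is immediate since this inequality is linear in $\bf Q$ and hence passes through the integral $\bf H_t=\int_0^1\nabla^2 f(\bm x^*+s\bm e_t)\,\d s$.
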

The proof of \thref{thm:rate-lp} is provided in Appendix~\ref{sec:proofs}.
We observe that the convergence rate of SGD depends on the highest finite moment $p$ of the noise sequence, and faster rates are achieved for larger values of $p$. 
The fastest rate implied by our result is near $\mathcal{O}\left(t^{ -p+1}  \right)$, which is achieved for $\rho\approx1$; yet, SGD converges even for very slowly decaying step-size sequences with $\rho$ closer to $0$.

If the noise has further integrability properties with a finite $p$-th moment for all $p \in [q,\alpha)$ for some $q<\alpha$ and if uniform $p$-PD assumption holds, then faster rates are achievable. In particular, the following result is a consequence of \thref{thm:rate-lp}, and its proof is provided in Appendix~\ref{sec:proofs}.

\begin{corollary}\thlabel{thm:rate-lpalpha}
 For constants $q,\alpha$ satisfying $1 < q < \alpha \le 2$, suppose that Assumptions~\ref{as:ppos} and \ref{as:noise} hold for every $p \in [q, \alpha)$. For step-size satisfying $\gamma_t \asymp t^{-\rho}$ with $\rho \in (0, 1)$,
 the error of the SGD iterates $\{ \bx_t\}_{t\in \mathbb N}$ from the minimizer $\bx^*$ satisfies

\begin{equation}\label{eqn:lpstrong}
\Exp\left[|\bm x_t - \bx^*|^q\right] =  \tilde{\mathcal{O}} \left(t^{-\rho q \frac{\alpha - 1}{\alpha}}  \right). 
\end{equation}
\end{corollary}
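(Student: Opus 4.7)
The plan is to deduce Corollary~\ref{thm:rate-lpalpha} directly from Theorem~\ref{thm:rate-lp} via an interpolation argument. Under the assumptions of the corollary, Assumptions~\ref{as:ppos} and \ref{as:noise} are in force for every $p \in [q,\alpha)$, so Theorem~\ref{thm:rate-lp} applies with any such $p$ and yields
\[
\Exp\!\left[|\bm x_t - \bm x^*|^{p}\right] = \mathcal{O}\!\left(t^{-\rho(p-1)}\right)
\qquad \text{for each } p \in [q,\alpha).
\]
The remaining task is to convert the $L^p$-bound into an $L^q$-bound and optimize over the available $p$.

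First I would apply Lyapunov's (or Jensen's) inequality in the form $\Exp[|Y|^q]^{1/q} \le \Exp[|Y|^p]^{1/p}$ whenever $1 \le q \le p$. This gives, for any fixed $p \in [q,\alpha)$,
\[
\Exp\!\left[|\bm x_t - \bm x^*|^{q}\right] \le \Bigl(\Exp\!\left[|\bm x_t - \bm x^*|^{p}\right]\Bigr)^{q/p} = \mathcal{O}\!\left(t^{-\rho q (p-1)/p}\right).
\]
Next, I would note that the exponent $\rho q (p-1)/p = \rho q \bigl(1 - \tfrac{1}{p}\bigr)$ is strictly increasing in $p$ on $[q,\alpha)$ and its supremum as $p \uparrow \alpha$ is $\rho q (\alpha-1)/\alpha$, matching the target exponent in~\eqref{eqn:lpstrong}.

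Finally, to match the $\tilde{\mathcal{O}}$ claim, I would fix an arbitrary $\varepsilon > 0$ and pick $p \in [q,\alpha)$ sufficiently close to $\alpha$ so that
\[
\rho q\!\left[\frac{\alpha-1}{\alpha} - \frac{p-1}{p}\right] = \rho q \cdot \frac{\alpha - p}{\alpha p} \;\le\; \varepsilon,
\]
which is possible because the left-hand side tends to $0$ as $p \uparrow \alpha$. The resulting bound $\Exp[|\bm x_t - \bm x^*|^{q}] = \mathcal{O}(t^{-\rho q(\alpha-1)/\alpha + \varepsilon})$ holding for every $\varepsilon > 0$ is precisely the definition of $\tilde{\mathcal{O}}(t^{-\rho q(\alpha-1)/\alpha})$ given in the notational preliminaries.

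There is no real obstacle here; the corollary is essentially an application of Lyapunov's inequality followed by letting $p \uparrow \alpha$. The only subtlety is that Theorem~\ref{thm:rate-lp} fails exactly at $p = \alpha$ (since the theorem requires the $p$-th moment to be finite and uniform $p$-PD to hold), which is why the supremum cannot be attained and the logarithmic/polynomial slack encoded in $\tilde{\mathcal{O}}$ appears in the statement.
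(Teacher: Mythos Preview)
Your proposal is correct and follows essentially the same approach as the paper: apply Theorem~\ref{thm:rate-lp} for each $p\in[q,\alpha)$, use Jensen's inequality to pass from the $L^p$ bound to an $L^q$ bound, and let $p\uparrow\alpha$ to obtain the $\tilde{\mathcal O}$ rate. Your additional remark explaining why the supremum cannot be attained (and hence why only $\tilde{\mathcal O}$ is claimed) is a helpful clarification not spelled out in the paper.
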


\begin{remark}
The additional integrability assumption yields faster rates for any feasible step-size sequence since $p(\alpha-1)/\alpha \ge p-1$ for $p\in(1,2]$.
\end{remark}

\par Let us briefly compare our results stated above to those in the setting where the noise sequence has a finite variance. A classical convergence result that goes back to \citet[Theorem~5]{chung1954stochastic}\footnote{This result, like many other similar studies in the 1950s, concerns only the one-dimensional case. But they generalize easily to higher dimensions.} states that
\begin{equation}\label{eqn:chung-fv-rate}
  \Exp\left[|\bx_t - \bx^*|^r\right] = \Theta\left(t^{-\rho r/2}\right),
\end{equation}
where $r\ge2$ is an integer such that the $r$-th moment exists for the stochastic approximation process, and this
is achieved for strongly convex objective functions in one dimension (whose second derivative $\{ f''(\bx): \bm x \in \mathbb R \}$ satisfies the uniformly $2$-PD property)
with a step-size choice $\gamma_t \asymp t^{-\rho}$ for some $\rho \in (1/2,1)$. We point out that our rate \eqref{eqn:lpstrong} recovers the rate implied by \eqref{eqn:chung-fv-rate} when $r=2$,
 and extends it further to the case $1\le r<2$.

\section{Stable Limits for the Polyak-Ruppert Averaging}
\label{sec:averaging}
\par In this section, we establish the limit distribution of the Polyak-Ruppert averaging under infinite noise variance, extending
the asymptotic normality result given by \cite{polyak1992acceleration}
to $\alpha$-stable distributions. Let us fix an $\alpha \in (1, 2]$ and assume the following throughout this subsection.

\begin{assumption}\label{as:dona}
Let $\bx_0$ be $\sF_0$-measurable. The gradient noise sequence $\{\bxi_t\}_{t\in \mathbb N^+}$ is given as 
\begin{equation}\label{eqn:noise-decomp-2}
    \bxi_{t+1}(\bx_t) =  \bm m_{t+1}(\bx_t)+ \bm \zeta_{t+1},
\end{equation}
 where $\{ \bm \zeta_t \}_{t\in \mathbb N^+}$ is an i.i.d. sequence with $\Exp[\bm \zeta_t] = 0$,
 and it is in the domain of normal attraction of an $n$-dimensional symmetric $\alpha$-stable distribution $\mu$, i.e.,
 \begin{equation}\label{eqn:noise-attracted}
    \frac{\bm \zeta_1 + \ldots + \bm \zeta_t}{t^{1/\alpha}}  \xrightarrow[t\to \infty]{\mathcal D} \mu.
\end{equation}
The state dependent component $\{ \bm m_t \}_{t\in \mathbb N^+}$ is a martingale difference sequence
with a second-moment satisfying~\eqref{eqn:state-dep-var}, 
and both sequences are adapted
 to the filtration 
 $\{\mathcal F_t\}_{t\in \mathbb N}$.
\end{assumption}
The above assumption also implies that $\Exp[|\bm \zeta_t|^p] < \infty$ for every $p\in[1,\alpha)$, i.e., the moment condition on the i.i.d. heavy-tailed component of the noise in Assumption~\ref{as:noise} holds for every $p\in[1,\alpha)$.
\par Denoting the Polyak-Ruppert averaging by $\barx_t \coloneqq \frac{1}{t} (\bx_0+...+\bx_{t-1})$, we are interested in the asymptotic behavior of
\begin{equation}
    t^{1-1/\alpha} (\barx_t - \bx^*) =  \frac{(\bm x_0  + \ldots + \bm x_{t-1}) - t \bm x^*}{t^{1/\alpha}},
\end{equation}
for $\alpha \in (1,2]$. In the special case when $\alpha = 2$, it is known that this limit converges
to a multivariate normal distribution (which is a 2-stable distribution), a result proven in the seminal work by \cite{polyak1992acceleration}. Similarly, we begin with a result that considers a quadratic objective where the function $\grad f(\bx)$ is linear in $\bx$, and then building on this result, we establish the limit distribution of Polyak-Ruppert averaging also in the more general non-linear case.

\begin{theorem}[linear case]\thlabel{thm:stab-linear} Suppose the function $\grad f(\bx)$ is affine, i.e. $\grad f (\bm x) = \bf A \bm x - \bm b$ for a real matrix $\bf A \in \reals^{n\times n}$ and a real vector $\bm b \in \reals^n$ and there exist scalars $p,\rho$ satisfying
\begin{align}
    \max \left( \frac{\alpha + \alpha \rho}{1 + \alpha \rho}, \alpha\rho \right) \le p\le \alpha,
\end{align}
such that $\bf A$ is $p$-PD and $\rho \in (0, 1)$. 
If the noise sequence satisfies Assumption~\ref{as:dona},
then
for the step-size satisfying $\gamma_t \asymp t^{-\rho}$,
the normalized average $t^{1-1/\alpha} (\barx_t - \bx^*)$ converges weakly to an $n$-dimensional $\alpha$-stable distribution.
\end{theorem}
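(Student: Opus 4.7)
I would adapt the residual-identity strategy of \cite{polyak1992acceleration} to the infinite-variance setting. Writing $\bm e_t := \bm x_t - \bm x^*$, and using $\grad f(\bm x) = \bf A\bm x - \bm b$ together with $\bf A \bm x^* = \bm b$, the SGD recursion rearranges, thanks to invertibility of $\bf A$ (a direct consequence of $p$-positive definiteness: $\bf A \bm w = 0$ for $\bm w\ne 0$ would violate $\bm v^\trsp \bf A\bm v^{\langle p-1\rangle} > 0$ at the $\bm v \in \sphere_p$ proportional to $\bm w^{\langle 1/(p-1)\rangle}$), into
\[
\bm e_t = -\bf A^{-1}\gamma_{t+1}^{-1}(\bm e_{t+1}-\bm e_t) - \bf A^{-1}\bm m_{t+1}(\bm x_t) - \bf A^{-1}\bm\zeta_{t+1}.
\]
Summing over $t=0,\dots,T-1$ and dividing by $T^{1/\alpha}$ yields $T^{1-1/\alpha}(\barx_T - \bm x^*) = I_T + II_T + III_T$, with $I_T$ the Abel-summation remainder coming from the first term, $II_T := -\bf A^{-1}T^{-1/\alpha}\sum_{t=0}^{T-1}\bm m_{t+1}(\bm x_t)$ the rescaled state-dependent martingale, and $III_T := -\bf A^{-1}T^{-1/\alpha}\sum_{t=1}^{T}\bm\zeta_t$. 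Assumption~\ref{as:dona} and the continuous mapping theorem immediately give $III_T \xrightarrow[T\to\infty]{\mathcal{D}} -\bf A^{-1}\mu$, which is again $n$-dimensional $\alpha$-stable because affine transforms preserve $\alpha$-stability. By Slutsky's theorem, it then suffices to show that $I_T$ and $II_T$ both converge to $0$ in probability.

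The term $I_T$ is handled by the Abel identity $\sum_{t=0}^{T-1}\gamma_{t+1}^{-1}(\bm e_{t+1}-\bm e_t) = \gamma_T^{-1}\bm e_T - \gamma_1^{-1}\bm e_0 + \sum_{t=1}^{T-1}(\gamma_t^{-1}-\gamma_{t+1}^{-1})\bm e_t$: with $\gamma_t\asymp t^{-\rho}$ one has $\gamma_t^{-1}-\gamma_{t+1}^{-1} = \mathcal{O}(t^{\rho-1})$, and combining this with the moment estimate $\Exp[|\bm e_t|^p] = \tilde{\mathcal{O}}(t^{-\rho p(\alpha-1)/\alpha})$ from Corollary~\ref{thm:rate-lpalpha}, a direct calculation gives that the boundary term and the weighted sum are both $\tilde{\mathcal{O}}(T^{(\rho-1)/\alpha})$ in $L^p$, hence $o_{\Pr}(1)$ since $\rho<1$. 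The two lower bounds on $p$ in the hypothesis are chosen precisely so that Corollary~\ref{thm:rate-lpalpha} is applicable at the exponent which makes this bookkeeping work.

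The real difficulty is $II_T$. A naive $L^p$ bound via the von Bahr--Esseen inequality only gives $\bigl(\Exp\bigl|\sum_t \bm m_{t+1}(\bm x_t)\bigr|^p\bigr)^{1/p} = \mathcal{O}(T^{1/p})$, which after the $T^{-1/\alpha}$ normalisation does \emph{not} vanish, because $p<\alpha$. The key observation is that while the unconditional variance of $\bm m_{t+1}$ may be infinite, its \emph{conditional} second moment $\Exp[|\bm m_{t+1}|^2 \mid \sF_t] \le K(1+|\bm x_t|^2)$ is finite almost surely. Since Corollary~\ref{thm:rate-lpalpha} ensures $\{\bm x_t\}$ is tight (indeed bounded in $L^p$ for every $p<\alpha$), I would truncate on the event $\{|\bm x_t| \le M\}$ and invoke a martingale CLT for the truncated process to obtain the Gaussian-type rate $\sum_{t=0}^{T-1}\bm m_{t+1}(\bm x_t) = \mathcal{O}_{\Pr}(\sqrt T)$; because $1/2 < 1/\alpha$ whenever $\alpha<2$, this yields $II_T = o_{\Pr}(1)$, while $\alpha=2$ degenerates to the classical Polyak--Juditsky CLT. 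Combining the three pieces via Slutsky gives $T^{1-1/\alpha}(\barx_T-\bm x^*) \xrightarrow[T\to\infty]{\mathcal{D}} -\bf A^{-1}\mu$, the claimed $n$-dimensional $\alpha$-stable limit. The main obstacle is entirely in controlling the residual from the complementary event $\{|\bm x_t|>M\}$ uniformly in $t$: the conditional second-moment bound on $\bm m_{t+1}$, together with the tightness of $\{\bm x_t\}$ (which in turn drives the constraints on $p$), is what ultimately makes this step go through.
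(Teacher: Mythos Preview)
Your Abel-summation decomposition into $I_T + II_T + III_T$ is a valid alternative to the paper's route, which instead quotes the matrix identity of \citet[Lemma~2]{polyak1992acceleration} to write $t^{1-1/\alpha}\overline{\bm\delta}_t$ as an initial-condition term $t^{-1/\alpha}\bf F_t\bm\delta_0$, the leading sum $t^{-1/\alpha}\sum_j\bf A^{-1}\bm\xi_j$, and a residual $t^{-1/\alpha}\sum_j\bf W_j^t\bm\xi_j$. The leading pieces ($III_T$ for you, $\bm I^{(2)}_t$ in the paper) match, and your treatment of $I_T$ is essentially right, modulo one slip: Corollary~\ref{thm:rate-lpalpha} requires Assumption~\ref{as:ppos} to hold for a \emph{range} of exponents, whereas \thref{thm:stab-linear} only posits $p$-positive definiteness for a single $p$. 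You must therefore invoke \thref{thm:rate-lp} directly, which with $\|\bm e_t\|_{L^p}=\mathcal O(t^{-\rho(p-1)/p})$ still gives $I_T\to 0$ in $L^p$ precisely under the constraint $p\ge\alpha\rho$.

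The genuine gap is $II_T$. The truncated piece $\sum_t\bm m_{t+1}\mathbbm 1_{|\bm x_t|\le M}$ does obey an $L^2$-orthogonality bound of order $\sqrt T$, but the complementary piece $\sum_t\bm m_{t+1}\mathbbm 1_{|\bm x_t|>M}$ is not controlled by your argument: an $L^p$ martingale inequality on it yields only $\sum_t\Exp[|\bm m_{t+1}|^p\mathbbm 1_{|\bm x_t|>M}]=\mathcal O(T)$, because $\Exp[|\bm x_t|^p\mathbbm 1_{|\bm x_t|>M}]$ need not decay in $t$ and you have no moment of order $>p$ to trade against the indicator; after the $T^{-p/\alpha}$ normalisation this diverges. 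Tightness of $\{\bm x_t\}$ is insufficient, and replacing it by $\sup_{t\le T}|\bm x_t|\le M$ would require a maximal inequality you do not have. What actually closes the argument is a Burkholder-type bound for $p\in(1,2]$ (the paper invokes a remark of \cite{neveu1975discrete}),
\[
\Exp\Bigl[\Bigl|\sum_t\bm m_{t+1}\Bigr|^p\Bigr]\le C\,\Exp\Bigl[\Bigl(\sum_t\Exp[|\bm m_{t+1}|^2\mid\sF_t]\Bigr)^{p/2}\Bigr],
\]
which consumes the conditional second-moment hypothesis directly, with no truncation. Substituting $\Exp[|\bm m_{t+1}|^2\mid\sF_t]\le K(1+|\bm x_t|^2)$ and then $\Exp[|\bm\delta_t|^p]=\mathcal O(t^{-\rho(p-1)})$ from \thref{thm:rate-lp} yields $\Exp[|II_T|^p]\le C\bigl(T^{p/2-p/\alpha}+T^{1-\rho(p-1)-p/\alpha}\bigr)$; the second exponent is negative exactly when $p>\alpha(1+\rho)/(1+\alpha\rho)$. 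Thus that lower bound on $p$ is the price of handling $II_T$, not $I_T$ as you suggest.
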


We observe from the above theorem that $\alpha$-stable limit is achieved for Polyak-Ruppert averaging for any step-size sequence with index $\rho \in (0,1]$. Thus, in the linear case,
the size of the interval of feasible indices is the same in both heavy- and light-tailed noise settings (see e.g.
\cite{polyak1992acceleration} and \cite{ruppert1988efficient}).
Notably, $\alpha$-stable limit of the averaged iterates does not depend on the index $\rho$.
Non-asymptotic rates are required to see the effect of step-size more clearly.

The next result generalizes Theorem~\ref{thm:stab-linear} to the setting where $\grad f(\bx)$ is non-linear.
\begin{theorem}[non-linear case]\thlabel{thm:stab-nonlin} Let $1 < 1/\rho < q < \alpha$ and suppose Assumption~\ref{as:ppos} holds for every $p\in[q,\alpha)$. Assume further that the gradient $\grad f(\bm x)$ can be approximated using the Hessian matrix $\bf \grad^2 f(\bx^*)$ around the minimizer $\bm x^*$ as
\begin{align}\label{eq:local-linearity}
  \left|\grad f(\bm x) - \bf \grad^2 f(\bx^*) (\bm x - \bm x^*)  \right| \le K \left|\bm x - \bm x^*\right|^q.
\end{align}
If the noise sequence satisfies Assumption~\ref{as:dona},
for the step-size satisfying $\gamma_t \asymp t^{-\rho}$, the normalized average $t^{1-1/\alpha} (\barx_t - \bx^*)$ converges weakly to an $n$-dimensional $\alpha$-stable distribution.
\end{theorem}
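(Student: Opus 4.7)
The strategy is to linearize the recursion around $\bx^*$ and reduce the problem to the linear case already settled in Theorem~\ref{thm:stab-linear}. Setting $\bf A := \grad^2 f(\bx^*)$ and defining the nonlinear remainder $\bm r_t := \grad f(\bx_t) - \bf A (\bx_t - \bx^*)$, the local linearity hypothesis~\eqref{eq:local-linearity} yields $|\bm r_t| \le K|\bx_t - \bx^*|^q$, so the SGD iteration takes the form
\begin{equation}
\bx_{t+1} - \bx^* = (\bf I - \gamma_{t+1}\bf A)(\bx_t - \bx^*) - \gamma_{t+1}\bxi_{t+1} - \gamma_{t+1}\bm r_t.
\end{equation}
Since Assumption~\ref{as:ppos} is postulated for every $p\in[q,\alpha)$, the matrix $\bf A$ lies in $\cone^p_{++}$ for every such $p$, in particular it is invertible.

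The first ingredient is an $L^q$ convergence rate for the iterates. Applying Corollary~\ref{thm:rate-lpalpha} with the exponent $q$ from the hypothesis (Assumption~\ref{as:dona} implies Assumption~\ref{as:noise} for every $p\in[1,\alpha)$) yields
\begin{equation}
\Exp|\bx_t - \bx^*|^q = \tilde{\mathcal{O}}\bigl(t^{-\rho q (\alpha - 1)/\alpha}\bigr)
\qquad\text{and hence}\qquad
\Exp|\bm r_t| \le K\,\Exp|\bx_t - \bx^*|^q = \tilde{\mathcal{O}}\bigl(t^{-\rho q(\alpha - 1)/\alpha}\bigr).
\end{equation}
The second ingredient is a Polyak--Ruppert telescoping. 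Solving the recursion for $\bf A(\bx_t - \bx^*)$ gives $\bf A(\bx_t - \bx^*) = \gamma_{t+1}^{-1}\bigl[(\bx_t-\bx^*) - (\bx_{t+1}-\bx^*)\bigr] - \bxi_{t+1} - \bm r_t$, and after summation over $s=0,\dots,t-1$ combined with Abel's transform on the varying $\gamma_s^{-1}$ one arrives at
\begin{equation}
t^{1 - 1/\alpha}(\barx_t - \bx^*) = \bf A^{-1}\left[\, t^{-1/\alpha}\bm B_t - t^{-1/\alpha}\sum_{s=1}^{t}\bxi_s - t^{-1/\alpha}\sum_{s=0}^{t-1}\bm r_s\, \right],
\end{equation}
where $\bm B_t$ collects the endpoint contributions $\gamma_1^{-1}(\bx_0 - \bx^*)$, $-\gamma_t^{-1}(\bx_t - \bx^*)$ and the telescopic Abel sum $\sum_{s=1}^{t-1}(\gamma_{s+1}^{-1} - \gamma_s^{-1})(\bx_s - \bx^*)$.

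It then remains to identify the limit of each term in the bracket. Exactly as in the linear case, $-t^{-1/\alpha}\sum_{s=1}^{t}\bxi_s$ converges weakly to the symmetric $\alpha$-stable law $\mu$: the martingale component $\bm m_s(\bx_{s-1})$ has conditional second moment bounded uniformly in $s$ via~\eqref{eqn:state-dep-var} together with the $L^q$-tightness of $\bx_s$ from above, so its scaled partial sum is $o_p(1)$ because $\alpha < 2$ makes $1/\alpha > 1/2$; the i.i.d.\ component normalized by $t^{1/\alpha}$ converges to $\mu$ by Assumption~\ref{as:dona}. The boundary block $t^{-1/\alpha}\bm B_t$ vanishes in probability: by Jensen $\Exp|\bx_s-\bx^*|\le \tilde{\mathcal{O}}(s^{-\rho(\alpha-1)/\alpha})$, so each of its three components has $L^1$-norm $\tilde{\mathcal{O}}(t^{(\rho-1)/\alpha}) \to 0$ using $\gamma_t^{-1}\asymp t^\rho$ and $\gamma_{s+1}^{-1}-\gamma_s^{-1}=\mathcal{O}(s^{\rho - 1})$.

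The genuinely new term, and the step I expect to be the main obstacle, is the nonlinear remainder $t^{-1/\alpha}\sum_{s=0}^{t-1}\bm r_s$. The $L^1$ bound above gives
\begin{equation}
\Exp\,\Bigl|\, t^{-1/\alpha}\sum_{s=0}^{t-1}\bm r_s\,\Bigr|
 \;\le\; t^{-1/\alpha}\sum_{s=1}^{t-1} \tilde{\mathcal{O}}\bigl(s^{-\rho q(\alpha-1)/\alpha}\bigr)
 \;=\; \tilde{\mathcal{O}}\bigl(t^{(\alpha - 1)(1 - \rho q)/\alpha}\bigr),
\end{equation}
where in the regime $\rho q(\alpha-1)/\alpha \ge 1$ the partial sum stays $\tilde{\mathcal{O}}(1)$ and the right-hand side is even smaller. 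The exponent $(\alpha-1)(1 - \rho q)/\alpha$ is strictly negative precisely when $\rho q > 1$, which is exactly where the theorem's hypothesis $1/\rho < q$ enters and drives the whole argument. Hence the remainder tends to $0$ in $L^1$ and so in probability, and Slutsky's theorem delivers $t^{1-1/\alpha}(\barx_t - \bx^*) \xrightarrow{\mathcal D} -\bf A^{-1}\mu$, an $n$-dimensional $\alpha$-stable distribution, completing the argument.
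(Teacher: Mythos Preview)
Your plan is correct, but it follows a genuinely different route from the paper's proof. The paper introduces an auxiliary \emph{linear} process $\bx_t^1$ driven by the same noise $\bxi_{t+1}(\bx_t)$, and then splits $t^{1-1/\alpha}\overline{\bm\delta}_t$ into $t^{1-1/\alpha}\overline{\bm\delta}_t^1$ plus the difference $t^{1-1/\alpha}(\overline{\bm\delta}_t^1-\overline{\bm\delta}_t)$. The first piece is handled by a direct appeal to \thref{thm:stab-linear}; the second is expressed through the Polyak--Juditsky matrix products $\bf W_j^t$ (via \thref{lemkappa}) and shown to vanish \emph{almost surely} by bounding $t^{-1/\alpha}\sum_j|\bm\delta_j|^q$ with Kronecker's lemma. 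Your argument, by contrast, bypasses the auxiliary process and the matrix machinery entirely: the one-line identity $\bf A\bm\delta_t=\gamma_{t+1}^{-1}(\bm\delta_t-\bm\delta_{t+1})-\bxi_{t+1}-\bm r_t$ plus Abel summation gives a decomposition whose three blocks are controlled directly in $L^1$. This is more elementary --- it does not need \thref{lem:rho-exp}, \thref{lemkappa}, or \thref{thm:stab-linear} as a black box, and in fact yields an alternative proof of the linear case by setting $\bm r_s\equiv 0$. What the paper's route buys is the slightly stronger almost-sure (rather than $L^1$) vanishing of the nonlinear remainder, and a cleaner conceptual parallel with the classical finite-variance argument of Polyak and Juditsky. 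One small imprecision worth tightening: the conditional second moment of $\bm m_s$ is not ``bounded uniformly in $s$'' but only by $K(1+|\bx_{s-1}|^2)$; the conclusion you want (that $t^{-1/\alpha}\sum_s\bm m_s\to 0$ in probability) still holds, but requires the Burkholder-type argument of \thref{lemneveu} together with the $L^q$ rate from \thref{thm:rate-lpalpha}, not just $L^q$-tightness.
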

The additional assumption~\eqref{eq:local-linearity} is standard (see e.g. \citet[Assumption~3.2]{polyak1992acceleration}), which simply imposes a linearity condition on the gradient of $f$
with an order-$q$ polynomial error term. We notice that the size of the interval of feasible indices
$\rho \in (1/\alpha, 1)$ is smaller this time compared to the light tailed case, where
\citet[Theorem~2]{polyak1992acceleration} allows $\rho \in (1/2, 1)$.

The above theorem establishes that,
when the noise has diverging variance,
the Polyak-Ruppert averaging admits an $\alpha$-stable limit rather than a standard CLT. This result has potential implications in statistical inference in the presence of heavy-tailed data. Inference procedures that take into account the computational part of the training procedure (instead of drawing conclusions for the minimizer of the empirical risk) rely typically on variations of Polyak-Ruppert averaging and the CLT they admit~\citep{fang2018online, su2018statistical,chen2020statistical}. The above theorem simply states this CLT does not hold under heavy-tailed gradient noise. Therefore, many of these procedures require further adaptation, if the gradient has undefined variance.
Finally, it is well-known that Polyak-Ruppert averaging achieves the Cram\'er-Rao lower bound~\citep{polyak1992acceleration,gadat2017optimal}, which is a lower bound on the variance of an unbiased estimator. However, it is not clear what this type of optimality means when the variance is not defined. These are important directions that require thorough investigations, and they will be studied elsewhere.

\section{Examples in the Presence of Heavy-tailed Noise}
\label{sec:examples}
In this section, we demonstrate how the stochastic approximation framework discussed in our paper covers several interesting examples, most notably linear regression and generalized linear models (GLMs), such that the heavy-tailed behavior naturally arise and the assumptions we proposed for Theorems~\ref{thm:rate-lp}, \ref{thm:stab-linear}, and \ref{thm:stab-nonlin} are all met.
\subsection{Ordinary Least Squares}
\par Let us first consider the following linear model,
\begin{equation}\label{eqn:m-lin-model}
    y =  \bm z\trsp \bm \beta_0  + \epsilon,
\end{equation}
where $\bm \beta_0 \in\reals^n$ is the true coefficients, $y \in \mathbb R$ is the response, the random vector $\bm z \in \mathbb R ^n$ denotes the covariates with a positive-definite second moment $0\prec\Exp[\bm z \bm z\trsp] < \infty$, and $\epsilon$ is a noise with zero conditional mean $\Exp[ \epsilon | \bm z ] = 0$.
In the classical setting, the noise $\epsilon$ is assumed to be Gaussian, whose variance is well defined. In this case, the population version of the maximum likelihood estimation (MLE) problem corresponds to minimizing $f(\bx) = \Exp[(y - \bm z\trsp \bx)^2]/2$ (where the expectation is taken over the $(y,\bm z)$ pair), or equivalently solving the following normal equations
\begin{equation}\label{eqn:lin-model-obj}
    \grad f(\bm x) \coloneqq \Exp\big[ \bm z \bm z \trsp\big] \bm x - \Exp[\bm z y] = 0.
\end{equation}
It can be easily verified that the true coefficients $\bm \beta_0$ is the unique zero of the
above equation, i.e. we have $\bx^* = \bm\beta_0$.

\par Now, suppose we are given access to a stream of i.i.d.\ drawn instances of the pair $(y, \bm z)$, denoted by $\{y_t, \bm z_t \}_{t\in \mathbb N^+}$. In large-scale settings, 
one generally runs the following stochastic approximation process, which is simply online SGD on the population MLE objective $f(\bx)$:
\begin{equation}\label{eqn:lin-model-SA-ver2}
    \bm x_{t} = \bm x_{t-1} - \gamma_t \left(\bm z_t \bm z_t\trsp \bm x_{t-1} - \bm z_t y_t\right).
\end{equation}
Manifestly, \eqref{eqn:lin-model-SA-ver2} is a special case of \eqref{eqn:SA},
where the gradient noise admitting the decomposition $\bm \xi_t = \bm \zeta_t+\bm m_t$,
for an i.i.d.\ component $\bm \zeta_t$ and a state-dependent component $\bm m_t$ (see \eqref{eqn:noise-decomp-2}),
\begin{equation}
    \begin{cases}\bm \zeta_t = \Exp[\bm z y] - \bm z_t y_t, \\\bm m_t = \left(\bm z_t \bm z_t\trsp - \Exp\left[ \bm z \bm z \trsp \right]\right)\bm x_{t-1}. \end{cases}
\end{equation}

In the presence of heavy-tailed noise, 
i.e., $\epsilon$ has possibly infinite variance,
the population MLE objective $f(\bx)$ may not be finite and
one should typically resort to methods from M-estimation and choose an appropriate loss function within robust statistics framework~\citep{huber2004robust,van2000asymptotic}. 
However, the SGD iterations \eqref{eqn:lin-model-SA-ver2} may still be employed to estimate the true coefficients $\bm\beta_0$ (potentially due to model misspecification), as we demonstrate below.

First, notice that 
the noise sequence can be decomposed in two parts, and the i.i.d. 
component $\{\bm \zeta_t\}_{t\in \mathbb N}$ exhibits the heavy-tailed behavior. 
Assume that this component has the highest defined moment order $1\le p<2$, i.e., $\Exp[|\bm \zeta_t|^p]<\infty$. Further,
the state dependent component $\bm m_t$ defines a martingale difference sequence, 
and the condition
\eqref{eqn:state-dep-var} is met since the covariates $\bm z$ 
have finite second moment, i.e.,
\begin{equation}
    \Exp\left[|\bm m_t|^2 \mid \bm x_{t-1}\right] \le C |\bm x_{t-1}|^2.
\end{equation}
Hence, Assumption~\ref{as:noise} is satisfied.
Next, assuming that the second moment of the covariates $\grad^2 f(\bx) = \Exp[\bm z \bm z\trsp]$ is $p$-PD, one can guarantee that Assumption~\ref{as:ppos} is satisfied.
Therefore, our convergence results can be invoked.
We emphasize that this assumption is always satisfied if $\Exp[\bm z \bm z\trsp]$ is diagonally dominant, but the condition is milder for $p>1$. %

\subsection{Generalized Linear Models}

In this section, we consider the problem of estimating the coefficients in generalized linear models (GLMs) in the presence of heavy-tailed noise.
GLMs play a crucial role in numerous statistics problems, and
provide a miscellaneous framework for many regression and classification tasks,
with many applications~\citep{mccullagh1989generalized,nelder1972generalized}.

For a response $y \in \mathbb R$ and random covariates $\bm z \in \mathbb R ^n$, the population version of an $\ell_2$-regularized MLE problem in the canonical GLM framework reads
\begin{equation}\label{objective:GLM}
\underset{\bx}{\text{minimize}} \ f(\bx)\coloneqq \Exp\left[\psi\left(\bx\trsp \bm z\right)- y\bx\trsp \bm z\right] + \frac{\lambda}{2} |\bx|^2\ \quad \ \text{ for }\ \quad \ \lambda>0.
\end{equation}
Here, $\psi : \reals \to \reals$ is referred to as the cumulant generating function (CGF) and assumed to be convex. Notable examples include $\psi(x)=x^2/2$ yielding linear regression, $\psi(x)=\log(1+e^{x})$ yielding logistic regression, and $\psi(x) = e^{x}$ yielding Poisson regression.
Gradient of the above objective \eqref{objective:GLM} is given by
\begin{equation}\label{eq:glm-grad}
    \grad f(\bm x) = \Exp\left[\bm z \psi'\big(\bm z\trsp\bm x\big)\right] - \Exp[\bm z y] + \lambda \bx.
\end{equation}

We define the unique solution of the population GLM problem as the unique zero of \eqref{eq:glm-grad}, which we denote by $\bx^*$. Note that we do not assume a model on data, allowing for model misspecification similar to~\citet{erdogdu2016scaled,erdogdu2019scalable}.
As in the previous section, 
we assume that the covariates have finite fourth moment and 
the response is contaminated with heavy-tailed noise with 
infinite variance. In this setting, the objective function is always defined,
even if the response has infinite variance. 

We are given access to a stream of i.i.d. drawn instances of the pair $(y, \bm z)$, denoted by $\{y_t, \bm z_t \}_{t\in \mathbb N^+}$, and we solve the above non-linear problem using the following stochastic process,
\begin{equation}\label{eqn:glin-model-SA}
    \bm x_{t} = \bm x_{t-1} - \gamma_t \left(\bm z_t \psi'\big(\bm z_t\trsp \bm x_{t-1}\big) - \bm z_t y_t + \lambda \bx_{t-1}\right),
\end{equation}
with gradient noise admitting the decomposition $\bm \xi_t = \bm \zeta_t + \bm m_t$ where
\begin{equation}
    \begin{cases}\bm \zeta_t = \Exp[\bm z y] - \bm z_t y_t, \\\bm m_t = \bm z_t \psi'\left(\bm z_t\trsp \bm x_{t-1}\right) - \Exp\left[\bm z_t \psi'\big(\bm z_t\trsp \bm x_{t-1}\big)\right]. \end{cases}
\end{equation}

In what follows, we verify our assumptions for a CGF
satisfying $|\psi'(x)|\le C(1+|x|)$ and $\psi''(x) \ge 0$ for all $x\in\reals$.
These assumptions can be easily verified for any convex CGF that grows at most linearly (e.g. $\psi(x) = \log(1+e^x)$).
$\bm\zeta_t$ are i.i.d.\ and contain the entire heavy-tailed part of the gradient noise. Assume that this component has the highest defined moment order $1\le p<2$, i.e., $\Exp[|\bm \zeta_t|^p]<\infty$.
Further observe that the state dependent component defines a martingale difference sequence and satisfies the condition
\eqref{eqn:state-dep-var} since the covariates $\bm z$ 
have finite fourth moment, and $|\psi'|$ grows at most linearly.
Therefore, Assumption~\ref{as:noise} is satisfied.

We note that the Hessian of the objective $f$ is given as
\begin{equation}\label{eq:glm-hess}
    \grad^2 f(\bm x) = \Exp\left[\bm z\bm z\trsp \psi''\big(\bm z\trsp\bm x\big)\right] + \lambda \bf I.
\end{equation}
Since $\psi'' (x)\ge 0$, $\grad^2 f(\bm x)$ is clearly PD for all $\lambda >0$. For sufficiently large $\lambda$, this matrix can also be made diagonally dominant,
which implies that it is $p$-PD for any $p\ge 1$, further implying Assumption~\ref{as:ppos}.
Therefore, for an appropriate step-size sequence, our convergence results on the SGD can be applied to this framework.

\section{Conclusion}
In this paper, we considered SGD subject to state-dependent and heavy-tailed noise with a potentially infinite variance when the objective belongs to a class of strongly convex functions. We provided a convergence rate for the distance to the optimizer in $L^p$ under appropriate assumptions. Furthermore, we provided a generalized central limit theorem that shows that the averaged iterates converge to a multivariate $\alpha$-stable distribution. We also discussed the implications of our results to applications such as linear regression and generalized linear models subject to heavy-tailed input data. Finally, 
while we leave it for a future study, we emphasize the importance of adapting existing statistical inference techniques that rely on the averaged SGD iterates in the presence of heavy-tailed gradient noise which arises naturally in modern statistical learning applications.

\section*{Acknowledgements}
MAE is partially funded by CIFAR AI Chairs program, and CIFAR AI Catalyst grant. 
MG’s research is supported in part by
the grants NSF DMS-1723085 and NSF CCF-1814888.
LZ is grateful to the support from a Simons Foundation Collaboration Grant.

\newpage
\bibliographystyle{abbrvnat}
{\small \bibliography{notes}}
\newpage
\appendix

\section{Lemmas and Discussions}
\label{sec:discussion}

\subsection{Key Lemmas}
\par In this subsection, we present some key lemmas used in the proof of our main theorems, which are helpful when considering stochastic problems with \emph{infinite} variance.

\par The concept of \emph{uncorrelatedness} has long been used by probabilists as a trick when computing and estimating variance. For example, consider a sequence of uncorrelated random vectors $\{\bm X_t\}_{t\in\mathbb N^+}$ (e.g. square-integrable martingale difference). Then
\begin{equation}\label{eqn:expansion-l2-uncorr}
    \Exp\left[|\bm X_1 + \ldots + \bm X_t|^2 \right] = \Exp\left[|\bm X_1|^2  \right] + \ldots + \Exp\left[|\bm X_t|^2  \right].
\end{equation}
Indeed, this type of expansion is used in \cite{polyak1992acceleration} to show $L^2$ convergence in the normality analysis of stochastic approximation problems.

\par However, correlatedness is \emph{only} defined when random elements have \emph{finite} variance. The following lemma provides an infinite-variance version of expansion \eqref{eqn:expansion-l2-uncorr}, stating that the $p$-th moment ($p < 2$) of a martingale without square-integrability assumption can also be bounded \emph{simpliciter} by the sum of the $p$-th moments of its differences, at the cost of a multiplicative constant that may depend only on $p$ and the dimension $n$. It is a generalization of the recent study \citet[Lemma~4.2]{cherapanamjeri2020optimal}.

\begin{lemma}\thlabel{lem:p-expand} Suppose $p \in [0, 1]$ and let $\{\bm S_t \}_{t\in\mathbb N}$ be an $n$-dimensional martingale adapted to the filtration $\{ \mathcal F_t \}_{t\in\mathbb N}$, with $\Exp[ | \bm S_t |^{1+p} ] < \infty$ for every $t$ and $\bm S_0 = 0$. Let $\bm X_i = \bm S_i - \bm S_{i-1}$. Then
\begin{align}
\Exp\left[ \left| \bm S_t \right|^{1+p}\right] 
\le 2^{1-p}n^{1-\frac{1+p}{2}} \sum_{i=1}^t \Exp\left[ \left| \bm X_i \right|^{1+p}\right].
\end{align}
\end{lemma}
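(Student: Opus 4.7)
My plan is to reduce the vector statement to a one-dimensional martingale moment inequality by exploiting the concavity of $u \mapsto u^{(1+p)/2}$ on $[0,\infty)$, then prove the scalar bound by an induction on $t$ driven by a pointwise inequality of von Bahr--Esseen type.

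\textbf{Step 1: Coordinatewise reduction.} Write $\bm S_t = (S_t^1, \dots, S_t^n)\trsp$ and $\bm X_i = (X_i^1, \dots, X_i^n)\trsp$, so each scalar sequence $\{S_t^j\}_{t\in\mathbb N}$ is an $(\mathcal F_t)$-martingale with increments $\{X_i^j\}$. Since $(1+p)/2 \in [1/2, 1]$, subadditivity of $u \mapsto u^{(1+p)/2}$ on $[0,\infty)$ applied to $a_j = (S_t^j)^2$ yields
\begin{equation*}
|\bm S_t|^{1+p} \;=\; \Bigl(\textstyle\sum_j (S_t^j)^2\Bigr)^{(1+p)/2} \;\le\; \textstyle\sum_j |S_t^j|^{1+p},
\end{equation*}
whereas Hölder's inequality with conjugate exponents $2/(1-p)$ and $2/(1+p)$ gives the dual bound
\begin{equation*}
\textstyle\sum_j |X_i^j|^{1+p} \;\le\; n^{(1-p)/2} \Bigl(\textstyle\sum_j (X_i^j)^2\Bigr)^{(1+p)/2} \;=\; n^{(1-p)/2}\,|\bm X_i|^{1+p}.
\end{equation*}

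\textbf{Step 2: Scalar martingale bound.} It now suffices to establish, for each coordinate $j$, the one-dimensional inequality $\Exp[|S_t^j|^{1+p}] \le 2^{1-p} \sum_i \Exp[|X_i^j|^{1+p}]$. Setting $r = 1 + p \in [1,2]$, the backbone is the elementary pointwise estimate
\begin{equation*}
|a+b|^r \;\le\; |a|^r + r\,\sg(a)\,|a|^{r-1} b + 2^{2-r}\,|b|^r \qquad \text{for all } a,b \in \reals,
\end{equation*}
which I would verify by homogeneity (rescaling to $|b| = 1$) and examining $h(t) := |t+1|^r - |t|^r - r\,\sg(t)\,|t|^{r-1}$: the claim reduces to $\sup_{t\in\reals} h(t) \le 2^{2-r}$, which is sharp at the endpoints ($r=1$ gives $h = 2t + 2$ on $(-1,0)$ attaining $2$, and $r=2$ gives $h\equiv 1$). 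Granting this pointwise bound, substituting $a = S_{t-1}^j$ and $b = X_t^j$ and conditioning on $\mathcal F_{t-1}$ kills the middle term by the martingale property $\Exp[X_t^j \mid \mathcal F_{t-1}] = 0$, producing the recursion
\begin{equation*}
\Exp[|S_t^j|^r] \;\le\; \Exp[|S_{t-1}^j|^r] + 2^{2-r}\,\Exp[|X_t^j|^r],
\end{equation*}
which iterates (using $S_0^j = 0$) to the claimed scalar inequality with constant $2^{2-r} = 2^{1-p}$. Chaining this with Step 1,
\begin{equation*}
\Exp[|\bm S_t|^{1+p}] \;\le\; \textstyle\sum_j \Exp[|S_t^j|^{1+p}] \;\le\; 2^{1-p} \textstyle\sum_i \sum_j \Exp[|X_i^j|^{1+p}] \;\le\; 2^{1-p} n^{(1-p)/2} \textstyle\sum_i \Exp[|\bm X_i|^{1+p}],
\end{equation*}
which is the desired bound, since $1 - (1+p)/2 = (1-p)/2$.

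\textbf{Where the work is.} The crux is the pointwise scalar inequality with the \emph{interpolated} constant $2^{2-r}$: the convexity bound $|a+b|^r \le 2^{r-1}(|a|^r + |b|^r)$ is far too lossy, while Chatterji's classical martingale version of von Bahr--Esseen offers only the uniform constant $2$, which misses the $r \to 2$ endpoint where the orthogonality of martingale differences forces the constant down to $1$. I expect verification of $\sup_t h(t) \le 2^{2-r}$ to require splitting $\reals$ into $(-\infty,-1]$, $(-1,0)$ and $[0,\infty)$, treating the blow-up of $h'$ near $t = 0$ and $t = -1$ separately, and possibly a convexity-in-$r$ interpolation between the two endpoints where equality holds, to squeeze the constant from the naive $2$ down to $2^{2-r}$ for intermediate $r$.
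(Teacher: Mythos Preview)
Your proposal is correct and matches the paper's proof in architecture: both reduce to coordinates (your subadditivity/H\"older pair is equivalent to the paper's passage through the $\|\cdot\|_{1+p}$ norm and yields the same factor $n^{(1-p)/2}$), and both establish the scalar bound from the same pointwise inequality $|a+b|^r \le |a|^r + r\,\sg(a)|a|^{r-1}b + 2^{2-r}|b|^r$, killing the linear term with the martingale property and telescoping.

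The only real difference is in how that pointwise inequality is verified. You flag it as the crux and anticipate a delicate case split with a convexity-in-$r$ interpolation. The paper instead invokes the H\"older-continuity estimate $\bigl|\sg(x)|x|^{p}-\sg(y)|y|^{p}\bigr|\le 2^{1-p}|x-y|^{p}$ for $p\in[0,1]$ (equivalently $|g'(x)-g'(y)|\le 2^{1-p}g'(|x-y|)$ for $g(x)=|x|^{1+p}$), which has a two-line proof: when $x,y$ share a sign it is subadditivity of $t\mapsto t^{p}$, and when they have opposite signs it is the power-mean bound $u^{p}+v^{p}\le 2^{1-p}(u+v)^{p}$. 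Integrating this in $b$ from $0$ to $b$ gives exactly your pointwise inequality with the sharp constant $2^{2-r}=2^{1-p}$, with no region-splitting or interpolation needed. So the step you expect to be hardest is in fact immediate once you pass to the derivative; otherwise your argument stands as written.
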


\par Next, we present a Taylor-expansion-type inequality for the function $\|\bm x \|_p^p$. Recall that we have defined the signed power of a vector in \eqref{eqn:signed-power}.

\begin{lemma}\thlabel{lem:vecexpandp} Let $p\in[1,2]$. For any $\bm x, \bm y \in \mathbb R^n$, $\|\bm x+\bm y\|^p_p\le\|\bm x\|^p_p + 4\|\bm y\|^p_p + p \bm y \trsp \bm x^{\langle p-1 \rangle}$.
\end{lemma}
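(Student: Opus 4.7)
My plan is to reduce the vector inequality to a one-dimensional inequality and then prove the latter using the fundamental theorem of calculus together with a Hölder-type estimate.

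First, I would observe that both sides of the claimed inequality are separable across coordinates: $\|\bm z\|_p^p = \sum_i |z^i|^p$ by definition, and $\bm y^\top \bm x^{\langle p-1\rangle} = \sum_i y^i \cdot \sg(x^i) |x^i|^{p-1}$ by the definition of the signed power in \eqref{eqn:signed-power}. Summing coordinate-wise, it therefore suffices to prove the scalar analogue, namely that for all $x, y \in \mathbb R$ and $p\in[1,2]$,
\begin{equation*}
|x+y|^p \le |x|^p + 4|y|^p + p y \cdot \sg(x)|x|^{p-1}.
\end{equation*}

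To prove the scalar inequality, I would apply the fundamental theorem of calculus to $g(t) := |x+ty|^p$, which is $C^1$ with $g'(t) = py\cdot \sg(x+ty)|x+ty|^{p-1}$. Writing $g(1)-g(0)=\int_0^1 g'(t)\dt$ and separating off the linear contribution gives
\begin{equation*}
|x+y|^p - |x|^p - p y\cdot\sg(x)|x|^{p-1} = \int_0^1 p y\bigl[\sg(x+ty)|x+ty|^{p-1} - \sg(x)|x|^{p-1}\bigr]\dt.
\end{equation*}
The main (and only) nontrivial step is to bound the bracketed term by $2|ty|^{p-1}$. This is the Hölder-type estimate $|\sg(u)|u|^{p-1}-\sg(v)|v|^{p-1}| \le 2|u-v|^{p-1}$ for $p\in[1,2]$, which I would prove by case analysis: if $u,v$ have the same sign, the claim reduces to $|a^{p-1}-b^{p-1}|\le|a-b|^{p-1}$ for $a,b\ge0$, which follows from the subadditivity $(a+(b-a))^{p-1}\le a^{p-1}+(b-a)^{p-1}$ valid for exponent $p-1\in[0,1]$; if $u,v$ have opposite signs, then $|u|^{p-1}+|v|^{p-1}\le 2\max(|u|,|v|)^{p-1}\le 2(|u|+|v|)^{p-1}=2|u-v|^{p-1}$.

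Plugging this estimate with $u=x+ty$, $v=x$ into the integral bounds the absolute value of the remainder by $\int_0^1 2p|y|\cdot|ty|^{p-1}\dt = 2|y|^p$, which is comfortably below $4|y|^p$. The case analysis for the Hölder bound is the only place requiring any real care, and it is entirely elementary; I note that the argument in fact yields the sharper constant $2$ in place of the stated $4$, so the lemma is stated with room to spare.
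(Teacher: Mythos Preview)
Your proof is correct and follows the same overall structure as the paper: both reduce the vector inequality to a coordinate-wise sum and then prove the scalar inequality
\[
|x+y|^p \le |x|^p + p\,y\,\sg(x)|x|^{p-1} + C|y|^p.
\]
The difference lies in how the scalar step is handled. The paper simply invokes the elementary inequality $|1+a|^p \le 1 + pa + 4|a|^p$ as a known fact (substituting $a=y/x$), whereas you derive the scalar bound from scratch via the fundamental theorem of calculus and the H\"older-type estimate $|\sg(u)|u|^{p-1}-\sg(v)|v|^{p-1}|\le 2|u-v|^{p-1}$. Your route is more self-contained and, as you note, yields the sharper constant $2$ in place of $4$; the paper's route is shorter if one is willing to take the one-variable inequality for granted.

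One small technicality: for $p=1$ the function $g(t)=|x+ty|$ is not $C^1$ but merely Lipschitz (hence absolutely continuous), so the fundamental theorem of calculus still applies with the a.e.\ derivative; alternatively the case $p=1$ is immediate from the triangle inequality. Either fix is trivial and does not affect the argument.
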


This inequality traces back to \cite{krasulina1969stochastic}, where the one-dimensional version $|x+y|^p \le |x|^p + C|y|^p+pyx^{p-1}\sg(x)$ is used\footnote{The paper \cite{krasulina1969stochastic} contains a minor error in ignoring the signum function $\sg(x)$ in this inequality. Our proof of \thref{thm:rate-lp} can be thought of its correction as well as extension.} to derive an $L^p$ rate of convergence for the one-dimensional stochastic approximation process with step-size $1/t$. In our current study, this lemma is used not only to derive $L^p$ rate of convergence for general infinite-variance process in $\mathbb R^n$ with variable step-size scheme (\thref{thm:rate-lp}), but also in the proof of the equivalent definitions of $p$-PD (\thref{thm:equivalent-p+}).

\par Finally, we quote \citet[Lemma~4.2]{fabian1967stochastic}, which we shall use to calculate the exact convergence rate (see also \cite{chung1954stochastic}).

\begin{lemma}[\cite{fabian1967stochastic}, Lemma~4.2]\thlabel{lem:chung-fabian} Let $\{b_t\}_{t\in\mathbb N}, A, B, \alpha, \beta$ be real numbers such that $0
<\alpha < 1$, $A>0$ and suppose the recursion
\begin{equation}
    b_{t+1} = b_t(1-At^{-\alpha}) + Bt^{-\alpha-\beta}
\end{equation}
holds. Then, $b_t = \Theta(t^{-\beta})$.
\end{lemma}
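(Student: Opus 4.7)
The plan is to identify the leading-order behavior of $b_t$ by comparing to the ansatz $b_t \sim (B/A)\,t^{-\beta}$, then bound the error. The constant $B/A$ is forced by matching: if $b_t \approx C t^{-\beta}$, the recursion yields $b_{t+1}-b_t \approx (B-CA)t^{-\alpha-\beta}$, while the smoothness of the ansatz gives $b_{t+1}-b_t \approx -\beta C t^{-\beta-1}$. Since $0<\alpha<1$, the term $t^{-\alpha-\beta}$ dominates $t^{-\beta-1}$ at infinity, forcing $C = B/A$.

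I would set $u_t := b_t - (B/A)\,t^{-\beta}$. Substituting into the recursion, the $Bt^{-\alpha-\beta}$ terms cancel exactly and we get
\begin{equation*}
u_{t+1} = (1-At^{-\alpha})\,u_t + r_t, \qquad r_t := \tfrac{B}{A}\bigl[t^{-\beta} - (t+1)^{-\beta}\bigr] = O(t^{-\beta-1}).
\end{equation*}
Iterating from a $t_0$ large enough that $At^{-\alpha}\in(0,1)$ for all $t \ge t_0$,
\begin{equation*}
u_t = u_{t_0}\prod_{k=t_0}^{t-1}(1-Ak^{-\alpha}) + \sum_{k=t_0}^{t-1} r_k \prod_{j=k+1}^{t-1}(1-Aj^{-\alpha}).
\end{equation*}
Using $\log(1-x) = -x + O(x^2)$ and the asymptotic $\sum_{j=s}^{t-1} j^{-\alpha} = \tfrac{1}{1-\alpha}(t^{1-\alpha}-s^{1-\alpha}) + O(1)$, one obtains the kernel bound $\prod_{j=k+1}^{t-1}(1-Aj^{-\alpha}) \lesssim \exp\bigl(-\tfrac{A}{1-\alpha}(t^{1-\alpha}-k^{1-\alpha})\bigr)$, so the homogeneous piece decays faster than any polynomial in $t$. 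For the convolution sum, concavity of $x\mapsto x^{1-\alpha}$ yields $t^{1-\alpha}-k^{1-\alpha} \ge (1-\alpha)t^{-\alpha}(t-k)$ on $k\in[t/2,t]$; writing $j = t-k$ reduces the sum to a geometric series in $j$ with ratio $e^{-At^{-\alpha}}$, giving the bound $t^{-\beta-1}\cdot t^{\alpha} = t^{\alpha-\beta-1}$. Since $\alpha<1$, $u_t = o(t^{-\beta})$, so $b_t = (B/A)\,t^{-\beta}(1+o(1)) = \Theta(t^{-\beta})$ whenever $B\ne 0$.

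The main obstacle is this discrete Laplace-method estimate. One must justify the asymptotic expansion of both the product kernel and the remainder $r_k$ uniformly in $k$ and $t$, and carefully handle the tail $k<t/2$, where the linearized exponent bound becomes loose but the exponential factor $\exp(-\tfrac{A}{2(1-\alpha)}t^{1-\alpha})$ is super-polynomially small and can absorb any polynomial. The cleanest presentation is to split the sum at $k = t/2$, use the linearized bound on the main piece, and crudely bound the tail. A secondary subtlety is that $1-At^{-\alpha}$ may be negative or large in magnitude for small $t$; this is handled by starting the iteration at $t_0$ large enough that $At_0^{-\alpha}\in(0,1)$, since initial-condition errors decay super-polynomially.
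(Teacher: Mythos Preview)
The paper does not prove this lemma; it is quoted verbatim from \cite{fabian1967stochastic} as a known tool and invoked once in the proof of \thref{thm:rate-lp}. So there is no in-paper argument to compare against.

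Your proposal is the standard route to this type of result and is correct in outline. Subtracting the ansatz $(B/A)t^{-\beta}$ is exactly the right move: it kills the forcing term and leaves a homogeneous recursion perturbed by a remainder $r_t = O(t^{-\beta-1})$ that is one full power smaller than the target. The product-kernel bound $\prod_{j=k+1}^{t-1}(1-Aj^{-\alpha}) \lesssim \exp\bigl(-\tfrac{A}{1-\alpha}(t^{1-\alpha}-k^{1-\alpha})\bigr)$ and the split at $k=t/2$ are the canonical way to handle the convolution; your linearization via concavity of $x\mapsto x^{1-\alpha}$ on the main piece and the crude super-polynomial bound on the tail are both sound. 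Two small points worth tightening in a final write-up: (i) the conclusion $b_t = \Theta(t^{-\beta})$ genuinely needs $B\neq 0$, which the lemma as stated in the paper does not make explicit---you already flag this; (ii) when bounding the tail $k<t/2$, note that $\sum_{k<t/2}|r_k|$ is at worst polynomial in $t$ for any $\beta\in\mathbb{R}$, so the factor $\exp(-c\,t^{1-\alpha})$ absorbs it regardless of sign. With those caveats your argument goes through and yields the sharper statement $b_t = (B/A)t^{-\beta}(1+o(1))$, which is slightly more than the paper needs.
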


\subsection{Discussions on $p$-Positive Definiteness and Uniform $p$-Positive Definiteness}
\label{sec:p+}
\par Let us now focus on $p$-PD and uniform $p$-PD assumptions which we defined back in \thref{def:p+} and \thref{def:unfmp+}. Our next theorem provides several equivalent characterizations of $p$-PD. 

\begin{theorem}[Equivalent definitions of $p$-PD]\thlabel{thm:equivalent-p+} Let $\bf Q$ be a symmetric matrix. The following are equivalent when $p\in[1,2]$.
\begin{itemize}
    \item[i)] There exist $\delta, L > 0$, such that $\| \bf I - t \bf Q \|_p^p \le 1 - Lt$ for all $t\in[0,\delta)$.
    \item[ii)] There exists $\lambda > 0$ such that for all $\bm v \in \mathbb R^n$, $\bm v \trsp \bf Q \bm v^{\langle p - 1 \rangle} \ge \lambda \|\bm v\|_p^p$. 
    \item[iii)] For all $\bm v \in \sphere_p$, $\bm v \trsp \bf Q \bm v^{\langle p - 1 \rangle} > 0$.
    \item[iv)] For all $\bm v \in\sphere_p$, there exists $t_0 > 0$ such that $\|\bm v - t_0 \bf Q \bm v \|_p < 1$.
\end{itemize}
\end{theorem}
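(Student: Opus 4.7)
The plan is to establish the four-way equivalence through the cycle $iii) \Rightarrow ii) \Rightarrow i) \Rightarrow iv) \Rightarrow iii)$, with the reverse implication $ii) \Rightarrow iii)$ being immediate by restricting to the unit sphere $\sphere_p$. The key analytic tool is \thref{lem:vecexpandp}, which supplies the first-order expansion $\|\bm x + \bm y\|_p^p \le \|\bm x\|_p^p + 4\|\bm y\|_p^p + p\, \bm y\trsp \bm x^{\langle p-1\rangle}$; it will drive the implication $ii) \Rightarrow i)$, while $iv) \Rightarrow iii)$ relies on convexity of the map $t \mapsto \|\bm v - t\bf Q\bm v\|_p^p$ together with a derivative computation at $t=0$, and $iii) \Rightarrow ii)$ uses compactness of $\sphere_p$ combined with $p$-homogeneity.

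Concretely, for $iii) \Rightarrow ii)$ I would observe that the continuous function $\bm v \mapsto \bm v\trsp \bf Q \bm v^{\langle p-1\rangle}$ attains a positive minimum $\lambda$ on the compact set $\sphere_p$, and then extend the bound to every $\bm v \in \mathbb R^n$ by $p$-homogeneity of both sides. For $ii) \Rightarrow i)$, I would apply \thref{lem:vecexpandp} with $\bm x = \bm v$ and $\bm y = -t\bf Q\bm v$, using $\bf Q = \bf Q\trsp$, to obtain for every $\bm v \in \sphere_p$
\begin{equation}
\|\bm v - t\bf Q \bm v\|_p^p \le 1 + 4t^p\|\bf Q\bm v\|_p^p - pt\,\bm v\trsp \bf Q\bm v^{\langle p-1\rangle} \le 1 + 4 M^p t^p - p\lambda t
\end{equation}
with $M = \|\bf Q\|_p$; since $t^p = o(t)$ as $t \to 0^+$ for $p > 1$, the middle term can be absorbed into $(p\lambda/2)\,t$ on a small interval $[0, \delta)$, and taking the supremum over $\bm v \in \sphere_p$ yields $\|\bf I - t\bf Q\|_p^p \le 1 - Lt$. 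The implication $i) \Rightarrow iv)$ then follows directly from the definition of the operator $p$-norm. For $iv) \Rightarrow iii)$, the function $g(t) \coloneqq \|\bm v - t\bf Q\bm v\|_p^p$ is convex as a composition of $\|\cdot\|_p$ with an affine map followed by the increasing convex power $x \mapsto x^p$; together with $g(0) = 1$ and $g(t_0) < 1$, convexity gives $g'(0) \le (g(t_0) - 1)/t_0 < 0$, and computing $g'(0) = -p\bm v\trsp \bf Q\bm v^{\langle p-1\rangle}$ via $\tfrac{d}{dx}|x|^p = p\sg(x)|x|^{p-1}$ produces iii).

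The main obstacle is the boundary case $p=1$, where both analytic moves above break down: in $ii) \Rightarrow i)$ the term $4t^p \|\bf Q\bm v\|_p^p$ becomes $4t\|\bf Q\bm v\|_1$ and is no longer of lower order than $p\lambda t$, while in $iv) \Rightarrow iii)$ the map $x \mapsto |x|$ is non-differentiable at the origin. I would handle $ii) \Rightarrow i)$ at $p = 1$ by bypassing \thref{lem:vecexpandp} and computing the operator $1$-norm directly as the maximum absolute column sum, which for small $t$ reduces to $1 - t\min_j(Q_{jj} - \sum_{i \ne j}|Q_{ij}|)$, then verifying via coordinate-sign test vectors that ii) forces this minimum to be strictly positive (i.e., strict diagonal dominance). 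For $iv) \Rightarrow iii)$ at $p = 1$, I would replace the derivative by the convex right derivative $g'(0^+)$, which exists by convexity, and observe that the contribution from coordinates with $v_i = 0$ is non-negative, so $g'(0^+) < 0$ still implies $\bm v\trsp \bf Q\bm v^{\langle 0 \rangle} > 0$.
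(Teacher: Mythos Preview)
Your proposal is correct and follows the same cycle of implications as the paper's proof (the paper argues i)$\Rightarrow$iv)$\Rightarrow$iii)$\Rightarrow$ii)$\Rightarrow$i), identical to your four arrows), using the same tools at each step: the operator-norm definition for i)$\Rightarrow$iv), convexity of $t\mapsto\|\bm v-t\bf Q\bm v\|_p^p$ for iv)$\Rightarrow$iii), compactness of $\sphere_p$ for iii)$\Rightarrow$ii), and \thref{lem:vecexpandp} for ii)$\Rightarrow$i). Your handling of the boundary case $p=1$ is in fact more careful than the paper's: the paper's bound $\|\bf I-t\bf Q\|_p^p\le 1+4t^p\|\bf Q\|_p^p-pt\lambda$ from \thref{lem:vecexpandp} fails to yield i) when $p=1$ (since $4t$ is no longer $o(t)$), so your diagonal-dominance workaround via coordinate-sign test vectors genuinely patches a gap in the published argument.
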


Next, we provide several equivalent characterizations of uniform $p$-PD. 

\begin{theorem}[Equivalent definitions of uniform $p$-PD]\thlabel{thm:equivalent-unfmp+} Let $\mathcal{M}$ be a bounded set of symmetric matrices. The following are equivalent when $p\in[1,2]$.
\begin{itemize}
    \item[i)] There exist $\delta, L > 0$, such that $\| \bf I - t \bf Q \|_p^p \le 1 - Lt$ for all $t\in[0,\delta)$ and $\bf Q \in \mathcal M$.
    \item[ii)] There exists $\lambda > 0$ such that for all $\bm v \in \mathbb R^n$ and $\bf Q \in \mathcal M$, $\bm v \trsp \bf Q \bm v^{\langle p - 1 \rangle} \ge \lambda \|\bm v\|_p^p$.
    \item[iii)] For all $\bm v \in \sphere_p$ and $\bf Q \in \overline{\mathcal M}$, $\bm v \trsp \bf Q \bm v^{\langle p - 1 \rangle} > 0$.
    \item[iv)] For all $\bm v \in\sphere_p$ and $\bf Q \in \overline{\mathcal M}$, there exists $t_0 > 0$ such that $\|\bm v - t_0 \bf Q \bm v \|_p < 1$.
\end{itemize}
\end{theorem}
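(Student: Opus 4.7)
The plan is to leverage the pointwise equivalences already established in \thref{thm:equivalent-p+} and upgrade them to uniform statements by exploiting the compactness of $\overline{\mathcal{M}}$ (which is compact in the finite-dimensional space $\cone$ since $\mathcal{M}$ is bounded) and of the sphere $\sphere_p$. I would close the cycle via (iii) $\Rightarrow$ (ii) $\Rightarrow$ (i) $\Rightarrow$ (iv) $\Rightarrow$ (iii).

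For (iii) $\Rightarrow$ (ii), observe that the map $F(\bf Q, \bm v) \coloneqq \bm v\trsp \bf Q\, \bm v^{\langle p-1\rangle}$ is jointly continuous on the compact set $\overline{\mathcal{M}} \times \sphere_p$, so under (iii) it attains a strictly positive minimum $\lambda > 0$. Using the coordinatewise identity $(c\bm u)^{\langle p-1\rangle} = \sg(c)|c|^{p-1}\bm u^{\langle p-1\rangle}$, one verifies that $\bm v \mapsto \bm v\trsp \bf Q \bm v^{\langle p-1\rangle}$ is positively $p$-homogeneous, so scaling from the sphere extends the bound to $\bm v\trsp \bf Q \bm v^{\langle p-1\rangle} \ge \lambda \|\bm v\|_p^p$ for every $\bm v \in \reals^n$ and $\bf Q \in \mathcal{M}$. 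For (ii) $\Rightarrow$ (i), apply \thref{lem:vecexpandp} with $\bm x = \bm v \in \sphere_p$ and $\bm y = -t\bf Q\bm v$:
\begin{equation}
\|\bm v - t\bf Q \bm v\|_p^p \le 1 + 4t^p\|\bf Q \bm v\|_p^p - pt\, \bm v\trsp \bf Q \bm v^{\langle p-1\rangle} \le 1 + 4t^p M^p - pt\lambda,
\end{equation}
where $M \coloneqq \sup_{\bf Q \in \mathcal{M}}\|\bf Q\|_p < \infty$ by boundedness of $\mathcal{M}$. Choosing $\delta > 0$ small enough that $4\delta^{p-1} M^p \le p\lambda/2$, I obtain $\|\bf I - t\bf Q\|_p^p = \sup_{\bm v \in \sphere_p}\|\bm v - t\bf Q\bm v\|_p^p \le 1 - Lt$ uniformly for $t \in [0,\delta)$ and $\bf Q \in \mathcal{M}$ with $L = p\lambda/2$.

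For (i) $\Rightarrow$ (iv), continuity of $\bf Q \mapsto \|\bf I - t\bf Q\|_p$ lets the bound in (i) pass to the closure, so $\|\bm v - t\bf Q \bm v\|_p \le \|\bf I - t\bf Q\|_p < 1$ holds for all $\bf Q \in \overline{\mathcal M}$ and $\bm v \in \sphere_p$ whenever $t$ is small. Finally, (iv) $\Rightarrow$ (iii) is the pointwise implication (iv) $\Rightarrow$ (iii) of \thref{thm:equivalent-p+}, applied separately to each $\bf Q \in \overline{\mathcal{M}}$. The only genuinely delicate step is (ii) $\Rightarrow$ (i): the $t^p$ remainder produced by \thref{lem:vecexpandp} must be absorbed into the $t$-linear term, which is exactly where boundedness of $\mathcal M$ is quantitatively used via finiteness of $M$. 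All other steps reduce to pointwise invocations of \thref{thm:equivalent-p+} together with continuity extensions from $\mathcal M$ to $\overline{\mathcal M}$.
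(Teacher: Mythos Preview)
Your argument is correct and mirrors the paper's cycle (iii) $\Rightarrow$ (ii) $\Rightarrow$ (i) $\Rightarrow$ (iv) $\Rightarrow$ (iii), using compactness of $\overline{\mathcal M}\times\sphere_p$, \thref{lem:vecexpandp}, and passage to the closure in exactly the same way. One small caveat: in the paper \thref{thm:equivalent-p+} is obtained only as a special case of \thref{thm:equivalent-unfmp+}, so citing it for (iv) $\Rightarrow$ (iii) is formally circular here --- you should instead invoke \thref{lem:convex:varphi} directly (convexity of $t\mapsto\|\bm v - t\bf Q\bm v\|_p^p$ together with $\varphi(t_0)<\varphi(0)$ forces $\varphi'(0)<0$), which is precisely what the paper does.
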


We notice that some mild assumptions can indeed imply $p$-PD. For example, we will show that diagonal dominance implies $p$-PD. Recall that a symmetric matrix $\bf Q = (q_{ij})_{n\times n}$ is called diagonally dominant (with non-negative diagonal) if for every $i\in[n]$,
\begin{equation}
    q_{ii} - \sum_{j\in[n] \setminus \{ i \}} |q_{ij}| > 0.
\end{equation}
Further, we say that a non-empty set $\mathcal M$ of symmetric matrices is \emph{uniformly diagonally dominant} (with non-negative diagonal) if
\begin{equation}
    \inf_{(q_{ij})_{n\times n} \in \mathcal M  }\min_{i\in[n] }\left( q_{ii} - \sum_{j\in[n] \setminus \{ i \}} |q_{ij}| \right) > 0.
\end{equation}
We have the following observations which we shall prove in Appendix~\ref{sec:proofs}. First, we observe
that the uniform $p$-PD assumption is weaker than
the notion of uniform diagonally dominance (with non-negative diagonal).

\begin{proposition}\thlabel{thm:diagdom->ppos}
A uniformly diagonally dominant (with non-negative diagonal) set of symmetric matrices is uniformly $p$-PD for every $p\in [1,2]$.
\end{proposition}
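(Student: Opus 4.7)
The plan is to directly verify characterization (ii) of Theorem~\ref{thm:equivalent-unfmp+}, namely produce a uniform $\lambda > 0$ such that $\bm v^\T \bf Q\, \bm v^{\langle p-1\rangle} \ge \lambda \|\bm v\|_p^p$ for every $\bf Q \in \mathcal M$ and every $\bm v \in \mathbb R^n$. The natural candidate for $\lambda$ is the uniform diagonal dominance margin
\[
  \delta \;\coloneqq\; \inf_{(q_{ij}) \in \mathcal M}\,\min_{i\in[n]} \left(q_{ii} - \sum_{j\ne i} |q_{ij}|\right) \;>\; 0.
\]

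First, I would expand the bilinear form componentwise using the definition of the signed power \eqref{eqn:signed-power}, splitting off the diagonal contribution:
\[
  \bm v^\T \bf Q\, \bm v^{\langle p-1\rangle}
  \;=\; \sum_{i} q_{ii} |v_i|^p \;+\; \sum_{i}\sum_{j\ne i} q_{ij}\, v_i\, \sg(v_j)\, |v_j|^{p-1}.
\]
The diagonal sum is already in the desired form. The off-diagonal sum is what needs to be absorbed by the diagonal, and this is the only step that requires any real work.

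To control the cross terms, I would apply Young's inequality with exponents $p$ and $p/(p-1)$ (for $p\in(1,2]$; the case $p=1$ is handled separately by the trivial bound $|v_i|\,|v_j|^{0} \le |v_i|$):
\[
  |v_i|\,|v_j|^{p-1} \;\le\; \frac{|v_i|^p}{p} + \frac{(p-1)\,|v_j|^p}{p}.
\]
Plugging this into the absolute value of the off-diagonal sum and invoking the symmetry $q_{ij}=q_{ji}$ (which makes the $|v_i|^p$ and $|v_j|^p$ contributions symmetric after re-indexing) yields, after the weights $1/p$ and $(p-1)/p$ add to $1$,
\[
  \left|\sum_{i}\sum_{j\ne i} q_{ij}\, v_i\, \sg(v_j)\, |v_j|^{p-1}\right|
  \;\le\; \sum_i |v_i|^p \sum_{j\ne i} |q_{ij}|.
\]

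Combining the two displays gives the sought lower bound
\[
  \bm v^\T \bf Q\, \bm v^{\langle p-1\rangle}
  \;\ge\; \sum_i |v_i|^p \left(q_{ii} - \sum_{j\ne i} |q_{ij}|\right)
  \;\ge\; \delta \,\|\bm v\|_p^p,
\]
uniformly over $\bf Q \in \mathcal M$. This verifies condition (ii) of Theorem~\ref{thm:equivalent-unfmp+} with $\lambda = \delta$, and hence $\mathcal M$ is uniformly $p$-PD.

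The only substantive step is the Young's inequality estimate combined with the symmetry trick to make the two halves of the bound line up; once that is in place, the argument is immediate, and the claim holds for every $p \in [1,2]$. I do not anticipate any genuine obstacle beyond the $p=1$ boundary case, which must be checked by hand but is even simpler since $|v_j|^{p-1}$ becomes $\sg(v_j)^{2}$ on the support of $v_j$.
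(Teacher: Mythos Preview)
Your proof is correct and follows essentially the same route as the paper: expand the bilinear form into diagonal and off-diagonal parts, bound the cross terms by $\sum_i |v_i|^p\sum_{j\ne i}|q_{ij}|$, and invoke characterization (ii) of \thref{thm:equivalent-unfmp+} with $\lambda=\delta$. The only cosmetic difference is the inequality used for the cross terms: the paper groups the off-diagonal sum into symmetric pairs $i<j$ and applies the elementary bound $x^{p-1}y+xy^{p-1}\le x^p+y^p$ (which follows from $(x^{p-1}-y^{p-1})(x-y)\ge 0$ and holds for all $p\ge 1$, so no separate $p=1$ case is needed), whereas you apply Young's inequality termwise and then symmetrize; summing your Young bound over both orderings recovers exactly the paper's inequality.
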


Next, we notice that the result in Proposition~\ref{thm:diagdom->ppos} is tight
for $p=1$.

\begin{proposition}\thlabel{rmk:1pd=udd}
Uniform $1$-PD is equivalent to uniform diagonal dominance (with non-negative diagonal).
\end{proposition}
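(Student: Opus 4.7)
The plan is to leverage Proposition~\ref{thm:diagdom->ppos} (with $p=1$) for the ``uniform diagonal dominance implies uniform $1$-PD'' direction, and to establish the converse via a test-vector argument that extracts the diagonal-dominance inequality row by row from the characterization in \thref{thm:equivalent-unfmp+}(ii). The forward direction is already contained verbatim in Proposition~\ref{thm:diagdom->ppos} specialized at $p=1$, so the entire content of the proof lies in the reverse implication.

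For the converse, I would start from uniform $1$-PD of the bounded set $\mathcal M$ and invoke \thref{thm:equivalent-unfmp+}(ii) with $p=1$ to obtain a universal constant $\lambda > 0$ such that
\begin{equation*}
\bm v \trsp \bf Q \bm v^{\langle 0 \rangle} \ge \lambda \|\bm v\|_1 \qquad \text{for all } \bm v \in \reals^n,\ \bf Q \in \mathcal M.
\end{equation*}
Fixing $\bf Q = (q_{ij}) \in \mathcal M$ and $j \in [n]$, I would substitute the one-parameter family of test vectors
\begin{equation*}
\bm v_t \coloneqq t\, e_j + \sum_{i \neq j} s_i\, e_i,\qquad s_i \coloneqq -\sg(q_{ij})\ \ (\text{arbitrary if } q_{ij}=0),\ t > 0.
\end{equation*}
Every coordinate of $\bm v_t$ is nonzero, so $\bm v_t^{\langle 0 \rangle}$ is literally the vector of signs of $\bm v_t$, and a direct expansion using the symmetry $q_{ij}=q_{ji}$ gives
\begin{equation*}
\bm v_t \trsp \bf Q \bm v_t^{\langle 0 \rangle} = t\, q_{jj} - (t+1) \sum_{i \neq j} |q_{ij}| + \sum_{k,\,l \neq j} s_k\, q_{kl}\, s_l.
\end{equation*}
Pairing this with $\|\bm v_t\|_1 = t + (n-1)$, dividing the resulting $1$-PD inequality by $t$, and sending $t \to \infty$ should produce $q_{jj} - \sum_{i \neq j} |q_{ij}| \ge \lambda$, which is exactly uniform diagonal dominance with margin $\lambda$, since $j$ and $\bf Q$ were arbitrary.

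The main obstacle is controlling the residual cross-term $\sum_{k,l \neq j} s_k q_{kl} s_l$ in the limit: it must be shown to be $o(t)$ uniformly in $\bf Q \in \mathcal M$. This is where the boundedness of $\mathcal M$---built into the operating setting of \thref{thm:equivalent-unfmp+}---enters decisively, as the cross-term is bounded in absolute value by $(n-1)^2 \sup_{\bf Q \in \mathcal M} \max_{k,l} |q_{kl}|$, a finite constant independent of $t$. A secondary, more mechanical point is the sign choice $s_i = -\sg(q_{ij})$, which is precisely what forces the linear off-diagonal contribution to collapse into $-\sum_{i\neq j}|q_{ij}|$; any other choice would leave a weaker quantity on the left-hand side and fail to recover diagonal dominance.
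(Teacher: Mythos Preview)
Your proof is correct. The forward direction is indeed Proposition~\ref{thm:diagdom->ppos}, and your test-vector argument for the converse goes through: the expansion of $\bm v_t\trsp \bf Q \bm v_t^{\langle 0 \rangle}$ is accurate, and dividing by $t$ and letting $t\to\infty$ yields $q_{jj}-\sum_{i\neq j}|q_{ij}|\ge\lambda$ for every $\bf Q\in\mathcal M$ and $j\in[n]$, which is precisely uniform diagonal dominance with margin $\lambda$. One small point: boundedness of $\mathcal M$ is not really needed to kill the cross-term in the limit (for each fixed $\bf Q$ the term is a constant, hence $o(t)$ automatically); rather, boundedness is already baked in as a hypothesis of \thref{thm:equivalent-unfmp+}, which is what supplies the uniform $\lambda$ in the first place. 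Also be sure that ``arbitrary'' means $s_i\in\{-1,+1\}$ when $q_{ij}=0$, so that all coordinates remain nonzero.

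The paper takes a shorter route: instead of characterization (ii) of \thref{thm:equivalent-unfmp+}, it uses characterization (i), namely $\|\bf I - t\bf Q\|_1 \le 1 - Lt$ for small $t$, together with the explicit column-sum formula $\|\bf I - t\bf Q\|_1 = \max_i\bigl(|1-tq_{ii}| + t\sum_{j\neq i}|q_{ij}|\bigr)$. Reading off $q_{ii}-\sum_{j\neq i}|q_{ij}|\ge L$ is then a one-line computation. Your approach is more hands-on and avoids invoking the closed form of the operator $1$-norm, at the cost of the limiting argument; the paper's approach exploits that closed form and is correspondingly terser.
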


Finally, we observe that the notion of uniform $2$-PD 
is weaker than uniform $p$-PD for any $p\in[1,2]$.

\begin{proposition}\thlabel{rmk:ppd->2pd}
Let $p\in[1,2]$. Uniform $p$-PD implies uniform 2-PD.
\end{proposition}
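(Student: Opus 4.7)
The plan is to reduce the proposition to its pointwise analogue: every symmetric $p$-PD matrix is positive definite (i.e.\ $2$-PD). Once this pointwise implication is established, the uniform version follows for free. By \thref{def:unfmp+}, uniform $p$-PD of $\mathcal{M}$ means $\overline{\mathcal{M}} \subseteq \cone^p_{++}$, so applying the pointwise implication to each $\bf Q \in \overline{\mathcal{M}}$ yields $\overline{\mathcal{M}} \subseteq \cone^2_{++}$, which is exactly uniform $2$-PD. No compactness or topological argument is needed because the implication transfers element-by-element.

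To prove the pointwise implication, I would use an eigenvalue argument built on the algebraic identity
\[
\bm v^\top \bm v^{\langle p-1\rangle} \;=\; \sum_{i=1}^n v_i \, \operatorname{sgn}(v_i) \, |v_i|^{p-1} \;=\; \sum_{i=1}^n |v_i|^p \;=\; \|\bm v\|_p^p,
\]
which is immediate from the definition \eqref{eqn:signed-power}. Given a symmetric $p$-PD matrix $\bf Q$, I would take any eigenvector $\bm v$ with eigenvalue $\mu$, normalize it so that $\bm v \in \sphere_p$ (legitimate since eigenvectors are nonzero), and compute $\bm v^\top \bf Q\, \bm v^{\langle p-1\rangle} = (\bf Q \bm v)^\top \bm v^{\langle p-1\rangle} = \mu\, \bm v^\top \bm v^{\langle p-1\rangle} = \mu$ using the symmetry of $\bf Q$ together with the identity above. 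The $p$-PD assumption applied to this $\bm v \in \sphere_p$ then forces $\mu > 0$. Since $\bf Q$ is symmetric and all its eigenvalues are strictly positive, the spectral theorem gives positive definiteness.

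There is no real obstacle here; the entire argument hinges on spotting the identity $\bm v^\top \bm v^{\langle p-1\rangle} = \|\bm v\|_p^p$, after which everything is a direct consequence of the spectral theorem for symmetric matrices. The only mild subtlety is verifying that $p$-PD (quantified over $\sphere_p$) can be applied to eigenvectors, which is automatic since any nonzero eigenvector rescales into $\sphere_p$ without changing the eigenvalue.
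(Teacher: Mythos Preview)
Your argument is correct and rests on the same core identity the paper uses, namely $\bm v^\top \bm v^{\langle p-1\rangle} = \|\bm v\|_p^p$, combined with the observation that for an eigenvector $\bm v$ of $\bf Q$ with eigenvalue $\mu$ one has $\bm v^\top \bf Q\, \bm v^{\langle p-1\rangle} = \mu\,\|\bm v\|_p^p$. The paper packages this slightly differently: it argues by contradiction, assuming uniform $p$-PD but not uniform $2$-PD, extracting a sequence $\bf Q_m \in \mathcal M$ whose smallest eigenvalues $\lambda_m$ have $\limsup \lambda_m \le 0$, and then invoking item~ii) of \thref{thm:equivalent-unfmp+} (the uniform lower bound $\bm v^\top \bf Q \bm v^{\langle p-1\rangle} \ge \lambda \|\bm v\|_p^p$) to force $\lambda_m \ge \lambda > 0$, a contradiction. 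Your route is more direct: you work pointwise on $\overline{\mathcal M}$ via \thref{def:unfmp+}, which lets you skip both the contradiction and the appeal to the equivalence theorem. The substance is the same; your presentation is a little cleaner.
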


\section{Omitted Proofs}\label{sec:proofs}
In this appendix, we first prove the lemmas, theorems, and propositions in Section~\ref{sec:discussion}, then prove the theorems in Sections~\ref{sec:convergence} and \ref{sec:averaging}. Throughout this appendix, we denote by $\bm \delta_t$ the error of the approximation $\bx_t - \bx^*$, and by $\overline{\bm \delta}_t$ the averaged error $(\bm \delta_0 + \ldots + \bm \delta_{t-1})/t$. The gradient $\grad f (\bx)$ and the Hessian $\grad^2 f(\bx)$ will be written as $\bm R(\bx)$ and $\grad \bm R(\bx)$ respectively, not only for notational simplicity, but also to stress the fact that our results can be applied to any instance of stochastic approximation \eqref{eqn:SA} including SGD.

\begin{proof-of}[\thref{lem:p-expand}]We first prove the $n=1$ case. Suppose $\{S_t\}$ is a one-dimensional martingale and $X_i = S_i - S_{i-1}$. Notice that the function $g(x) = |x|^{1+p}$ satisfies the inequality (see e.g. \citet[Lemma~A.3]{cherapanamjeri2020optimal}):
\begin{equation}
    |g'(x) - g'(y)| \le 2^{1-p} g'(|x-y|),
\end{equation}
where the weak derivative $g'(x) = \sg (x)$ is used in the inequality above in the case of $p=0$, where $$\sg(x):=\begin{cases} 1 & \mbox{if} \quad x>0, \\
                                 -1 & \mbox{if} \quad x<0, \\
                                  0 & \mbox{if} \quad x=0.
                                  \end{cases}$$
Furthermore, by $\Exp[X_i g'(S_{i-1}) \mid \mathcal F_{i-1}] = g'(S_{i-1}) \Exp[ X_i  \mid \mathcal F_{i-1} ] = 0$, we have
\begin{align}
    \Exp[g(S_t)] &= \sum_{i=1}^t \Exp \left[ \int_{S_{i-1}}^{S_i} g'(x) \d x \right]\nonumber \\
    &= \sum_{i=1}^t \Exp \left[ X_i g'(S_{i-1}) + \int_{S_{i-1}}^{S_i} \left[g'(x) - g'(S_{i-1})\right] \d x \right]\nonumber \\
    &= \sum_{i=1}^t \Exp \left[ \int_{S_{i-1}}^{S_i} \left[g'(x) - g'(S_{i-1})\right] \d x \right] \nonumber\\
    &= \sum_{i=1}^t \Exp \left[ \int_{0}^{X_i} \left[g'(S_{i-1} + \tau) - g'(S_{i-1})\right] \d \tau \right]\nonumber \\
    &= \sum_{i=1}^t \Exp \left[ \int_{0}^{|X_i|} \left|g'(S_{i-1} + \sg(X_i)\tau) - g'(S_{i-1})\right| \d \tau \right]\nonumber \\
    &\le 2^{1-p} \sum_{i=1}^t \Exp \left[ \int_{0}^{|X_i|} g'(\tau) \d \tau \right] \nonumber\\
    & =   2^{1-p}  \sum_{i=1}^t \Exp[g(|X_i|)].\label{ineq:n:1}
\end{align}

\par Next, for the higher dimension $n>1$, we denote by $S^j_i$ (resp. $X^j_i$) the $j$-th entry of the vector $\bm S_i$ (resp. $\bm X_i$). We can apply the inequality \eqref{ineq:n:1} obtained above to $S_t^j$ by taking a $(1+p)$-norm,
\begin{align}
    \Exp \left[ \left\| \bm S_t \right\|^{1+p}_{1+p} \right] &= \sum_{j=1}^n \Exp\left[ \left| S_t^j \right|^{1+p} \right]  \nonumber\\
    &\le \sum_{j=1}^n 2^{1-p} \sum_{i=1}^t \Exp \left[ \left|X_i^j\right|^{1+p} \right] \nonumber\\
    & = 2^{1-p} \sum_{i=1}^t  \sum_{j=1}^n \Exp \left[ \left|X_i^j\right|^{1+p} \right] \nonumber\\
    &= 2^{1-p}  \sum_{i=1}^t \Exp \left[\left\| \bm X_i \right\|^{1+p}_{1+p} \right].
\end{align} 
Finally, the inequalities
\begin{equation}
    |\bm x| \le \| \bm x \|_{1+p} \le n^{\frac{1}{1+p} - \frac{1}{2}} |\bm x|
\end{equation}
give our desired result:
\begin{equation}
  \Exp \left[ | \bm S_t |^{1+p} \right] \le 2^{1-p}n^{1-\frac{1+p}{2}} \sum_{i=1}^t \Exp \left[| \bm X_i |^{1+p} \right].
\end{equation}
The proof is complete.
\end{proof-of}

\begin{proof-of}[\thref{lem:vecexpandp}]By the inequality that $|1+a|^p \le 1 + ap + 4|a|^p$ for any $p \in [1,2]$ and $a \in \mathbb R$, we have that for any $p\in [1,2]$
and $x, y \in \mathbb R$,
\begin{align}\label{one:dim:x:y}
    |x+y|^p \le |x|^p + py|x|^{p-1} \sg(x) + 4|y|^p.
\end{align}
Next, for any $\bm x = (x^1, \ldots, x^n)\trsp, \bm y = (y^1, \ldots, y^n)\trsp \in \mathbb R^n$, by taking the $p$-norm and applying the inequality \eqref{one:dim:x:y}, we obtain
\begin{align}
    \|\bm x+\bm y\|^p_p &= \sum_{i=1}^n \left|x^i + y^i \right|^p \nonumber\\
    &\le  \sum_{i=1}^n \left( \left|x^i\right|^p + py^i\left|x^i\right|^{p-1} \sg(x^i) + 4\left|y^i\right|^p \right)\nonumber\\
    &= \|\bm x\|^p_p + 4\|\bm y\|^p_p + p \sum_{i=1}^n y^i\left|x^i\right|^{p-1} \sg(x^i)\label{apply:ineq} \\
    &= \|\bm x\|^p_p + 4\|\bm y\|^p_p + p \bm y \trsp \bm x^{\langle p-1 \rangle},\label{poweredineq}
\end{align}
which completes the proof.
\end{proof-of}

Since \thref{thm:equivalent-p+} is just a special case of \thref{thm:equivalent-unfmp+}, we will only prove the latter. Before we proceed, let us first state a useful technical lemma.

\begin{lemma}\thlabel{lem:convex:varphi}
Let $\bm u, \bm v \in \mathbb R^n$ and consider the function $\varphi(t) = \| \bm u + t \bm v \|_p^p = \sum_{i=1}^n | u^i + v^i t |^p$. The function $\varphi$ is convex and has the following derivative (when $1<p\le 2$) or subderivative (when $p=1$):
\begin{equation}
   \varphi'(t) = \sum_{i=1}^n p\left|u^i + v^i t\right|^{p-1} \sg\left(u^i + v^i t\right) v^i   = p\bm v\trsp (\bm u + t \bm v)^{\langle p -1 \rangle}.
\end{equation}
\end{lemma}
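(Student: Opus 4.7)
The plan is to handle the two assertions (convexity and the (sub)derivative formula) separately, exploiting the fact that $\varphi$ is a finite sum of one-dimensional functions of $t$. This reduces the whole problem to a scalar computation about $g_c(t) := |a + ct|^p$.

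For convexity, I would first recall that $x \mapsto |x|^p$ is convex on $\mathbb{R}$ for every $p\ge 1$ (it is nonnegative and its second derivative $p(p-1)|x|^{p-2}$ is nonnegative where defined, with a standard density/subdifferential argument covering $p=1$ and the point $x=0$). Since $t\mapsto u^i + v^i t$ is affine, $t\mapsto |u^i+v^i t|^p$ is convex as the composition of a convex function with an affine map. Summing over $i\in[n]$ then yields convexity of $\varphi$.

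For the (sub)derivative when $1<p\le 2$, I would differentiate termwise. The scalar function $g(x)=|x|^p$ is $C^1$ with $g'(x)=p|x|^{p-1}\sg(x)$ (interpreting $g'(0)=0$, which is consistent because $p>1$ forces $|x|^{p-1}\to 0$ as $x\to 0$). The chain rule then gives
\begin{equation*}
  \frac{d}{dt}|u^i+v^i t|^p = p|u^i+v^i t|^{p-1}\sg(u^i+v^i t)\,v^i,
\end{equation*}
and summing over $i$ produces exactly the stated expression. The rewriting as $p\bm v^\top(\bm u + t\bm v)^{\langle p-1\rangle}$ is just unpacking the definition of the signed-power notation from \eqref{eqn:signed-power}.

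For $p=1$, the function $|x|$ is not differentiable at $0$, but it is convex and has the subdifferential $\partial|x| = \{\sg(x)\}$ for $x\neq 0$ and $[-1,1]$ for $x=0$. Using the convention $\sg(0)=0$ from the proof of \thref{lem:p-expand}, the value $\sum_{i} v^i\sg(u^i+v^i t)$ lies in the subdifferential of each summand (indeed $0\in[-1,1]$ handles the degenerate indices), so it is a valid subgradient of the sum $\varphi(t)$ by the subdifferential sum rule for convex functions. I would verify this concretely by checking the subgradient inequality $\varphi(s)\ge\varphi(t)+(s-t)\cdot\bigl[\sum_i v^i\sg(u^i+v^i t)\bigr]$ termwise via $|a+cs|\ge |a+ct|+(s-t)c\,\sg(a+ct)$, which is elementary.

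There is really no serious obstacle here; the only subtle point is the $p=1$ case, where one must be careful that the expression in the statement is a specific selection from the subdifferential (not the full set-valued subdifferential), and that the convention $\sg(0)=0$ keeps this selection well-defined and consistent with the formula written for $p>1$.
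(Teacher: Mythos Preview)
Your argument is correct and is precisely the kind of straightforward termwise/scalar reduction the paper has in mind; in fact the paper omits the proof entirely, stating it ``is straightforward and is hence omitted here,'' so there is nothing to compare against beyond confirming that your convexity-via-composition argument, the $C^1$ chain-rule computation for $1<p\le 2$, and the subdifferential selection (with $\sg(0)=0$) for $p=1$ all go through as written.
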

The proof of Lemma~\ref{lem:convex:varphi} is straightforward and is hence omitted here.

\par Now we are ready to prove \thref{thm:equivalent-unfmp+}.

\begin{proof-of}[\thref{thm:equivalent-unfmp+}]We shall show that i)$\implies$iv)$\implies$iii)$\implies$ii)$\implies$i).
\begin{itemize}[leftmargin=1in]
    \item[i)$\implies$iv)] Take a sequence $\{\bf Q_1, \bf Q_2, \ldots \} \subseteq \mathcal M$ such that $\lim_{m\to\infty} \bf Q_m = \bf Q$. iv) follows from $\|\bf I - (\delta/2) \bf Q_m\|_p^p \le 1 - L\delta/2$.
    \item[iv)$\implies$iii)] For all $\bm v \in \sphere_p$ and $\bf Q \in \overline{\mathcal M}$, consider the function $\varphi(t) = \|\bm v - t\bf Q \bm v  \|_p^p$. According to \thref{lem:convex:varphi}, $\varphi(t)$ is convex. Furthermore, $\varphi(t_0) < 1 = \varphi(0)$. Hence it follows that $\varphi'(0) < 0$; that is, $\bm v \trsp \bf Q \bm v^{\langle p - 1 \rangle} > 0$.
    \item[iii)$\implies$ii)] Since the function $(\bm v, \bf Q) \mapsto\bm v \trsp \bf Q \bm v^{\langle p - 1 \rangle}$ is continuous, it maps the compact set $\sphere_p \times \overline{\mathcal M}$ to a compact set. Hence there exists some $\lambda > 0$ such that for all $\bm v \in \sphere_p$ and $\bf Q \in \overline{\mathcal M}$, $\bm v \trsp \bf Q \bm v^{\langle p - 1 \rangle} \ge \lambda$. Now, for every $\bm u \in \mathbb R^n \setminus \{0\}$, by setting $\bm v = \bm u / \|\bm u\|_p$, we get $\bm u \trsp \bf Q \bm u^{\langle p - 1 \rangle} \ge \lambda \|\bm u\|_p^p$.
    \item[ii)$\implies$i)] For arbitrary $\bm v \in \mathbb R^n$ and $\bf Q \in \mathcal{M}$, by \thref{lem:vecexpandp} we have $\|(\bf I - t \bf Q)\bm v\|_p^p = \| \bm v - t \bf Q \bm v \|_p^p \le \|\bm v\|_p^p + 4t^p \| \bf Q \bm v \|_p^p - pt (\bm v\trsp \bf Q \bm v^{\langle p-1\rangle}) \le \|\bm v\|_p^p + 4t^p \|\bf Q\|_p^p \|\bm v\|_p^p - pt\lambda\|\bm v\|_p^p$. This implies i).
\end{itemize}
The proof is complete.
\end{proof-of}

\begin{proof-of}[\thref{thm:diagdom->ppos}]Let $\bf Q \in \mathcal M$ and $\bm v \in \mathbb R^n$.
\begin{align}
   \bm v \trsp \bf Q \bm v^{\langle p - 1 \rangle} &= \sum_{i=1}^n q_{ii}|v^{i}|^p + \sum_{i<j}q_{ij}(v^{i}|v^{j}|^{p-1}\sg(v^{j}) + v^{j}|v^{i}|^{p-1}\sg(v^{i})) \nonumber\\
   &\ge  \sum_{i=1}^n q_{ii}|v^{i}|^p - \sum_{i<j}|q_{ij}|(|v^{i}||v^{j}|^{p-1} + |v^{j}||v^{i}|^{p-1}) \nonumber\\
   &\ge \sum_{i=1}^n q_{ii}|v^{i}|^p - \sum_{i<j}|q_{ij}|(|v^{i}|^p + |v^{j}|^p)\nonumber \\
   &= \sum_{i=1}^n |v^{i}|^p \left(q_{ii} - \sum_{j\neq i}|q_{ij}|\right),
\end{align}
where we used the inequality $x^{p}+y^{p}\ge x^{p-1}y+y^{p-1}x$ for any $p\ge 1$ and $x,y\ge 0$\footnote{To see this, notice that for any $p\ge 1$ and $x,y\ge 0$, $x^{p}+y^{p}-x^{p-1}y-y^{p-1}x=(x^{p-1}-y^{p-1})(x-y)\ge 0$.} to get the third line from the second line above.
Hence the uniform $p$-PD of $\mathcal M$ follows from the item ii) of \thref{thm:equivalent-unfmp+}.
The proof is complete.
\end{proof-of}

\begin{proof-of}[\thref{rmk:1pd=udd}]Suppose $\mathcal M$ is uniform 1-PD. By the item i) of \thref{thm:equivalent-unfmp+}, there exists $\delta, L>0$ such that $\|\bf I - t \bf Q\|_1 \le 1- Lt$ for all $t\in[0,\delta)$ and $\bf Q \in \mathcal M$. Let $\bf Q = (q_{ij})_{n\times n}$ and notice that
\begin{equation}
 \|\bf I - t \bf Q\|_1 = \max_{i\in[n]} \left( |1-tq_{ii}| + \sum_{j\in[n]\setminus\{ i\}}t|q_{ij}|  \right).
\end{equation}
It follows that
\begin{equation}
    \min_{i\in[n] }\left( q_{ii} - \sum_{j\in[n] \setminus \{ i \}} |q_{ij}| \right) \ge L>0.
\end{equation}
Hence $\mathcal M$ is uniformly diagonally dominant (with non-negative diagonal). 
The proof is complete.
\end{proof-of}

\begin{proof-of}[\thref{rmk:ppd->2pd}]Suppose $\mathcal M$ is uniformly $p$-PD but not uniformly $2$-PD. Then, there exists a sequence $\{\bf Q_1, \bf Q_2, \ldots\}\subseteq\mathcal M$ such that the smallest eigenvalues $\lambda_m$ of $\bf Q_m$ satisfy
\begin{equation}\label{eqn:lim-eigvals<=0}
    \lim_{m\to\infty}\lambda_m \le 0.
\end{equation}
For each $m\in\mathbb N^+$, there exists an $\bm v_m \in \mathbb R^n \setminus\{0\}$ such that $\bf Q_m \bm v_m = \lambda_m \bm v_m$. Hence
\begin{equation}
    \bm v_m \trsp \bf Q_m \bm v_m^{\langle p - 1 \rangle} = \lambda_m \bm v_m \trsp \bm v_m^{\langle p - 1 \rangle} = \lambda_m \|\bm v _m\|_p^p.
\end{equation}
But by the item ii) of \thref{thm:equivalent-unfmp+}, there exists $\lambda > 0$ such that $\lambda_m \ge \lambda$. This contradicts \eqref{eqn:lim-eigvals<=0}.
The proof is complete.
\end{proof-of}

\begin{proof-of}[\thref{thm:rate-lp}] We use a technique similar to \cite{krasulina1969stochastic}. Define the function 
\begin{equation}\label{defn:T:t}
\bm T_t(\bm x) = \left(T_t^1(\bm x), \ldots, T_t^n(\bm x)\right)\trsp = \bm x - \bm x^* - \gamma_{t+1} \bm R(x).
\end{equation}

An $n$-dimensional (and corrected) version of the first inequality in
the proof of \citet[Theorem~2]{krasulina1969stochastic} can be obtained by applying \thref{lem:vecexpandp} to our stochastic approximation scheme,
\begin{align}
    \left\|\bm x_{t+1} - \bm x^*\right\|^p_p &= \left\|\bm T_t(\bm x_t) - \gamma_{t+1} \bm \xi_{t+1} \right\|^p_p \\
    &\le \left\|\bm T_t(\bm x_t) \right\|^p_p + 4\gamma_{t+1}^p\left\|\bm \xi_{t+1}\right\|^p_p + p\gamma_{t+1} \sum_{i=1}^n \xi^i_{t+1}\left|T^i_t(\bm x_t)\right|^{p-1} \sg T^i_t(\bm x_t).\label{take:expectation:x:y}
\end{align}
Since $\Exp\left[\xi^i_{t+1}|T^i_t(\bm x_t)|^{p-1} \sg T^i_t(\bm x_t) \mid \bm x_t\right] =|T^i_t(\bm x_t)|^{p-1} \sg T^i_t(\bm x_t) \, \Exp[\xi^i_{t+1} \mid \bm x_t] = 0$, by taking expectations in \eqref{take:expectation:x:y}, we get
\begin{align}
    \Exp \left[  \left\|\bm \delta_{t+1}\right\|^p_p \right] &= \Exp \left[  \left\|\bm x_{t+1} - \bm x^*\right\|^p_p \right]\nonumber \\
    &\le \Exp\left[ \left\|\bm T_t(\bm x_t)\right\|^p_p \right] + 4\gamma_{t+1}^p \Exp \left[\left\|\bm \xi_{t+1}\right\|^p_p \right]\nonumber\\
    &= \Exp\left[\left\|(\bm x_t - \bm x^*) - \gamma_{t+1}\bm R(\bm x_t) \right\|^p_p\right] + 4\gamma_{t+1}^p \Exp \left[\left\|\bm \xi_{t+1}\right\|^p_p \right].
\end{align}
By the mean value theorem, there exists $\bm x^\flat_t \in \{ \bm x^* + \tau(\bm x_t- \bm x^*) : 0\le \tau \le 1  \}$, such that $\bm R(\bm x_t) = \nabla \bm R(\bm x^\flat_t)(\bm x_t - \bm x^*)$, and then
\begin{align}
    & \Exp\left[\left\|(\bm x_t - \bm x^*) - \gamma_{t+1}\bm R(\bm x_t) \right\|^p_p\right] + 4\gamma_{t+1}^p \Exp \left[\left\|\bm \xi_{t+1}\right\|^p_p \right]\nonumber\\
    &= \Exp\left[\left\|(\bf I - \gamma_{t+1} \nabla \bm R(\bm x^\flat_t)) (\bm x_t - \bm x^*)\right\|^p_p\right] + 4\gamma_{t+1}^p \Exp \left[\left\|\bm \xi_{t+1}\right\|^p_p \right] \nonumber\\
    &\le \left\|\bf I - \gamma_{t+1} \nabla \bm R(\bm x^\flat_t) \right\|_p^p \cdot \Exp \left[ \| \bm x_t - \bm x^* \|_p^p \right] + 4\gamma_{t+1}^p \Exp \left[\left\|\bm \xi_{t+1}\right\|^p_p \right] \nonumber \\
    &\le \left\|\bf I - \gamma_{t+1} \nabla \bm R(\bm x^\flat_t) \right\|_p^p \cdot \Exp \left[ \| \bm \delta_t \|_p^p \right] + C_0\gamma_{t+1}^p\left(1 + \Exp\left[\| \bm\delta_t \|_p^p\right]\right),
\end{align}
where the last inequality follows from 
\begin{align}
    \Exp\left[ \left|\bm m_{t+1}\right|^p  \mid  \mathcal F_t\right] &\le  \Exp\left[ \left|\bm m_{t+1}\right|^2  \mid  \mathcal F_t\right]^{p/2} \le  \left[K\left(1+|\bx_t|^2\right)\right]^{p/2}
    \label{eqn:p-state-bound}
    \\ &\le  K^{p/2}\left(1+|\bx_t|^p\right)
    \le K^{p/2}\left(1+2^{p-1}\left(|\bm \delta_t|^p + |\bx^*|^p\right)\right),\nonumber
\end{align}
where we used the inequality $(x+y)^{r}\le x^{r}+y^{r}$ for any $x,y\ge 0$, $0\le r \le 1$
to obtain the first inequality in the second line above, as well as the assumption $\Exp[|\bm \zeta_1|^p]<\infty$. 

\par Note that $\left\|\bf I-\gamma_{t+1}\nabla \bm R(\bm x^\flat_t) \right\|^p_p$ can be estimated by the uniform $p$-PD assumption (see item i) of \thref{thm:equivalent-unfmp+}) since $\gamma_t \to 0$. For $t$ sufficiently large,
\begin{equation}
    \left\|\bf I-\gamma_{t+1}\nabla \bm R(\bm x^\flat_t) \right\|^p_p \le 1-L\gamma_{t+1}.
\end{equation}
And there is a positive constant $C_1$ such that $1-L\gamma_{t+1} + C_0 \gamma_{t+1}^p \le 1- C_1\gamma_{t+1}$ for $t$ sufficiently large. Hence, we arrive at the following iterative bound
\begin{align}\label{iterbound2}
    \Exp\left[  \left\|\bm \delta_{t+1}\right\|^p_p\right] \le (1- \gamma_{t+1}C_1) \cdot \Exp\left[ \left\|\bm \delta_t \right\|^p_p \right] + C_0\gamma_{t+1}^p
\end{align}
for $t$ sufficiently large.

\par Next, let us substitute $\gamma_{t+1}$ with $t^{-\rho}$ where $0 < \rho < 1$. Consider the iteration
\begin{align}\label{mu:iterate}
    \mu_{t+1} = (1- t^{-\rho} C_1) \cdot \mu_t  + C_0 t ^{-\rho p},
\end{align}
so that by \eqref{iterbound2}, $\Exp\left[ \left\|\bm \delta_t \right\|^p_p \right] = \mathcal O(\mu_t)$. By virtue of \thref{lem:chung-fabian}, we get 
\begin{equation}\label{mu:rate}
\mu_t = \Theta \left(t^{-\rho(p-1)}\right). 
\end{equation}
Therefore, by \eqref{iterbound2}, \eqref{mu:iterate}, and \eqref{mu:rate}, 
we obtain the following rate of convergence:
\begin{equation}
   \Exp[\|\bm \delta_t\|^p_p] = \mathcal O \left(t^{-\rho(p-1)}\right).
\end{equation}
Next, since $p$-norms on $\mathbb R^n$ are all equivalent, we can drop the subscript $\|\cdot\|_p$ and obtain
\begin{equation}\label{eqn:rate}
   \Exp[|\bm \delta_t|^p] = \mathcal O \left(t^{-\rho(p-1)}\right).
\end{equation}
Finally, by \eqref{eqn:p-state-bound}, we see that $\sup_{t\in\mathbb N^+}\Exp[|\bm \xi_t |^p] \le \sup_{t\in\mathbb N^+}\Exp[2^{p-1}(|\bm m_t |^p + |\bm \zeta_t|^p)] < \infty$. The proof is complete.
\end{proof-of}
\begin{proof-of}[\thref{thm:rate-lpalpha}]Under the assumptions of \thref{thm:rate-lpalpha}, the rate $\Exp[|\bm \delta_t|^p] = \mathcal{O}\left(t^{ -\rho (p-1)  }\right)$ holds for every $p\in [q, \alpha)$. We can thus apply Jensen's inequality to strengthen it. By Jensen's inequality and \eqref{eqn:lpweak}, we get
\begin{equation}
   \Exp\left[|\bm \delta_t|^q\right] \le \Exp[ |\bm \delta_t|^p ]^{q/p} = \mathcal{O}\left(t^{ -\rho (p-1)\frac{q}{p}  }  \right).
\end{equation}
By letting $p \nearrow \alpha$, we conclude that have for every $\varepsilon > 0$,
\begin{equation}
    \Exp\left[|\bm \delta_t|^q\right] = o \left(t^{-\rho q \frac{\alpha - 1}{\alpha} + \varepsilon}  \right).
\end{equation}
The proof is complete.  %
\end{proof-of}

Next, we state a series of technical lemmas as well as their proofs, which will be used in the proofs of Theorems~\ref{thm:stab-linear}
and \ref{thm:stab-nonlin}.

\begin{lemma}\thlabel{lem:rho-exp} If $\gamma_t \asymp  t^{-\rho}$ with $0 < \rho < \kappa \le 1$, then for all $\lambda > 0$, 
    \begin{align}\label{second:limit}
\lim_{t \to \infty} t^{-\kappa} \sum_{j=1}^{t-1}\exp\left(-\lambda\sum_{i=j}^{t-1} \gamma_i \right) = 0.
    \end{align}
\end{lemma}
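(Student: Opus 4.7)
The plan is to obtain a clean asymptotic estimate for $\sum_{i=j}^{t-1}\gamma_i$ via integral comparison, split the outer sum into a ``head'' (small $j$) and a ``tail'' (large $j$), show the head is super-polynomially small, and bound the tail by a geometric series. Since $0<\rho<1$, integral comparison gives some constant $c>0$ such that for all $1\le j\le t-1$,
\[
\sum_{i=j}^{t-1}\gamma_i \;\ge\; c\,\frac{t^{1-\rho}-j^{1-\rho}}{1-\rho}.
\]
I would split the outer sum at $j=\lfloor t/2\rfloor$.

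For the head $j\le t/2$, the exponent is bounded below by a positive multiple of $t^{1-\rho}\bigl(1-2^{-(1-\rho)}\bigr)$, so each term is at most $\exp(-c_1 t^{1-\rho})$ for some $c_1>0$. The total contribution is at most $\tfrac{t}{2}\exp(-c_1 t^{1-\rho})$, which decays faster than any polynomial and is therefore $o(t^\kappa)$.

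For the tail $t/2<j\le t-1$, I use the complementary, sharper estimate that for such $j$,
\[
\sum_{i=j}^{t-1}\gamma_i \;\ge\; c_2\,(t-j)\,t^{-\rho},
\]
which follows from $\gamma_i \ge c_2 i^{-\rho}\ge c_2 t^{-\rho}$ for $i\le t$. Setting $k=t-j$ and $r=\exp(-\lambda c_2 t^{-\rho})\in(0,1)$, the tail sum becomes
\[
\sum_{k=1}^{\lceil t/2\rceil} r^{k} \;\le\; \frac{r}{1-r}.
\]
Since $1-r \sim \lambda c_2 t^{-\rho}$ as $t\to\infty$, this quantity is $\mathcal{O}(t^\rho)$.

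Combining the two pieces, $\sum_{j=1}^{t-1}\exp(-\lambda\sum_{i=j}^{t-1}\gamma_i)=\mathcal{O}(t^\rho)$, so multiplication by $t^{-\kappa}$ yields $\mathcal{O}(t^{\rho-\kappa})\to 0$ because $\rho<\kappa$. The step that needs the most care is verifying the two-sided asymptotic $\sum_{i=j}^{t-1}\gamma_i \asymp (t^{1-\rho}-j^{1-\rho})/(1-\rho)$ uniformly in $j$ from the hypothesis $\gamma_t\asymp t^{-\rho}$ (i.e., confirming that hidden constants can be chosen independently of $j$ and $t$), but this is standard integral-comparison bookkeeping. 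No deeper ingredient is needed.
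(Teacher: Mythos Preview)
Your proof is correct and arguably more elementary than the paper's. Both arguments start from the same integral-comparison lower bound $\sum_{i=j}^{t-1}\gamma_i \ge B(t^{1-\rho}-j^{1-\rho})$, but from there the paper rewrites the whole sum as a single ratio $\bigl(\sum_{j=0}^{t-1} e^{Bj^{1-\rho}}\bigr)\big/\bigl(t^\kappa e^{Bt^{1-\rho}}\bigr)$ and applies the Stolz--Ces\`aro theorem to show it tends to zero. Your head/tail split avoids Stolz--Ces\`aro entirely: the head is killed by the super-polynomial decay $\exp(-c_1 t^{1-\rho})$, and the tail becomes a geometric series summing to $O(t^\rho)$ via $1-r\sim \lambda c_2 t^{-\rho}$. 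This actually yields the sharper statement that the full sum is $O(t^\rho)$, whereas the paper only extracts $o(t^\kappa)$. One minor remark: your closing caveat about needing the \emph{two-sided} asymptotic $\sum_{i=j}^{t-1}\gamma_i \asymp (t^{1-\rho}-j^{1-\rho})/(1-\rho)$ is overly cautious---both the head and tail estimates use only lower bounds on the partial sums, so the one-sided inequality $\gamma_i \ge c\, i^{-\rho}$ (valid for large $i$, hence for all $i$ in the tail range once $t$ is large) suffices throughout.
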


\begin{proof}
Notice that there exists some constant $B>0$ such that
\begin{equation}
     \sum_{i=j}^{t-1} \gamma_i\ge \frac B \lambda \left(t^{1-\rho}- j^{1-\rho} \right).
\end{equation}
It follows that
\begin{equation}
t^{-\kappa} \sum_{j=1}^{t-1}\exp\left(-\lambda\sum_{i=j}^{t-1} \gamma_i \right)\le t^{-\kappa} \sum_{j=0}^{t-1}\exp\left(-Bt^{1-\rho}+Bj^{1-\rho} \right)
= \frac{\sum_{j=0}^{t-1} \exp(Bj^{1-\rho})}{t^{\kappa}\exp(Bt^{1-\rho})}.
\end{equation}
By Stolz-Ces\`aro theorem, we have
\begin{align*}
\frac{\sum_{j=0}^{t-1} \exp(Bj^{1-\rho})}{t^{\kappa}\exp(Bt^{1-\rho})}
&\asymp \frac{\exp(Bt^{1-\rho})} {(t+1)^{\kappa}\exp(B(t+1)^{1-\rho}) - t^{\kappa}\exp(Bt^{1-\rho})}  \\
&= \frac{1}{(t+1)^{\kappa}\exp[B((t+1)^{1-\rho}-t^{1-\rho})] - t^{\kappa}} \\
&= \frac{1}{(t+1)^{\kappa}\exp[B(1-\rho)(t+1)^{-\rho} + o(t^{-\rho})] - t^{\kappa}}  \\
&= \frac{1}{(t+1)^{\kappa}[1 + B(1-\rho)(t+1)^{-\rho} + o(t^{-\rho})] - t^{\kappa}} \\
&= \frac{1}{B(1-\rho)(t+1)^{\kappa-\rho} + o((t+1)^{\kappa-\rho})} \\
&\to 0,
\end{align*}
as $t\rightarrow\infty$. 
The proof is complete.
\end{proof}

\begin{lemma}\thlabel{lemkappa}
Suppose $\gamma_t \asymp  t^{-\rho}$ and $0 < \rho < \kappa \le 1$; let $\bf A$ be a positive definite symmetric matrix. Consider the matrix recursion in \cite[Lemma~1]{polyak1992acceleration},
\begin{align}
    \bf X_{j}^j  = \bf I, \quad \bf X_j^{t+1} = \bf X_j^t - \gamma_t \bf A \bf X_j^t, \quad (j\in\mathbb N^+)
\end{align}
and define
\begin{align}
       \overline{\bf X}_j^t = \gamma_j \sum_{i=j}^{t-1}\bf X_j^i, \quad \bf \Phi_j^t = \bf A^{-1} - \overline{\bf X}_j^t. 
\end{align}
Then the following limit holds,
\begin{align}
\lim_{t\to\infty} \frac 1 {t^{\kappa}} \sum_{j=1}^{t-1} \|\bm \Phi_j^t \| = 0.
\end{align}
\end{lemma}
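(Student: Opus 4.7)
The plan is to decompose $\bm\Phi_j^t$ via summation by parts into a principal boundary term that decays exponentially in $\sum_{i=j}^{t-1}\gamma_i$ plus a correction term that captures the variation of the step-size, and then reduce the estimate in both cases to an application of Lemma \ref{lem:rho-exp}. The starting identity is $\bf X_j^i - \bf X_j^{i+1} = \gamma_i \bf A\bf X_j^i$, which rewrites $\bf A\bar{\bf X}_j^t = \sum_{i=j}^{t-1}(\gamma_j/\gamma_i)(\bf X_j^i - \bf X_j^{i+1})$. Setting $u_i = \gamma_j/\gamma_i$ (so $u_j=1$ and $u_i$ is nondecreasing as $\gamma$ is nonincreasing) and applying Abel summation, one obtains
\begin{equation*}
    \bm\Phi_j^t \;=\; u_{t-1}\,\bf A^{-1}\bf X_j^t \;-\; \bf A^{-1}\sum_{i=j+1}^{t-1}(u_i-u_{i-1})\,\bf X_j^i.
\end{equation*}

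Let $\lambda>0$ denote the smallest eigenvalue of $\bf A$. For $t$ large enough that $\gamma_t\lambda$ is small, the elementary estimate $\|\bf I-\gamma_i\bf A\|\le 1-\lambda\gamma_i\le e^{-\lambda\gamma_i}$ gives $\|\bf X_j^i\|\le\exp(-\lambda\sum_{k=j}^{i-1}\gamma_k)$. For the principal term, note that $u_{t-1}=\gamma_j/\gamma_{t-1}\asymp (t/j)^{\rho}$. When $j\le t/2$, the exponential decay $\exp(-\lambda\sum_{i=j}^{t-1}\gamma_i)\le\exp(-c\,t^{1-\rho})$ crushes this polynomial factor, while for $j>t/2$ one has $u_{t-1}\le 2^{\rho}$; hence in both regimes we obtain $\|u_{t-1}\bf A^{-1}\bf X_j^t\|\le C\exp(-\lambda'\sum_{i=j}^{t-1}\gamma_i)$ for suitable constants. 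Summing over $j$ and invoking Lemma \ref{lem:rho-exp} shows this contribution to $t^{-\kappa}\sum_{j=1}^{t-1}\|\bm\Phi_j^t\|$ vanishes as $t\to\infty$.

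The delicate part is the correction term, because the weights $u_i-u_{i-1}\ge 0$ telescope to $u_{t-1}-1$, which need not be small—so a pointwise bound is useless. The resolution is to exploit the exponential decay of $\|\bf X_j^i\|$ by interchanging the order of summation after plugging in the bound $\|\bf X_j^i\|\le\exp(-\lambda\sum_{k=j}^{i-1}\gamma_k)$, and then observing that for $\gamma_s\asymp s^{-\rho}$ we have $u_i-u_{i-1}\asymp \rho\gamma_j i^{\rho-1}$, so $(u_i-u_{i-1})\exp(-\lambda\sum_{k=j}^{i-1}\gamma_k)$ can be absorbed into a geometric-type tail sum in $i$ that leaves behind a factor comparable to $\exp(-\lambda''\sum_{k=j}^{t-1}\gamma_k)$ times a bounded quantity. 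A second application of Lemma \ref{lem:rho-exp} on the outer sum in $j$ then closes the argument. The main obstacle throughout is balancing the growing polynomial weight $u_{t-1}$ (or the growing differences $u_i-u_{i-1}$) against the exponential decay of $\bf X_j^i$ so that the final double sum still falls into the regime $\kappa>\rho$ required by Lemma \ref{lem:rho-exp}; this is exactly why the hypothesis $\rho<\kappa$ is used.
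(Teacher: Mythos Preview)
Your Abel-summation decomposition is correct, and the principal term $u_{t-1}\bf A^{-1}\bf X_j^t$ is handled soundly. The genuine gap is in the correction term. The claim that summing $(u_i-u_{i-1})\exp(-\lambda\sum_{k=j}^{i-1}\gamma_k)$ over $i$ ``leaves behind a factor comparable to $\exp(-\lambda''\sum_{k=j}^{t-1}\gamma_k)$'' is false: take $j=1$ and $\gamma_i=i^{-\rho}$, so $u_i-u_{i-1}\asymp\rho\,i^{\rho-1}$ and the sum $\sum_{i\ge 2}i^{\rho-1}e^{-\lambda m_1^i}$ converges to a \emph{positive constant}, not to something decaying like $e^{-\lambda'' m_1^t}$. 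More generally the inner sum in $i$ is of order $j^{\rho-1}$, not $e^{-\lambda'' m_j^t}$, so a ``second application of Lemma~\ref{lem:rho-exp} on the outer sum in $j$'' is unavailable; Lemma~\ref{lem:rho-exp} simply does not govern the correction term.

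The fix is to bound the correction directly rather than trying to recycle the exponential mechanism. Either (i) show the inner sum over $i$ is $O(j^{\rho-1})$ and then use $\sum_{j=1}^{t-1}j^{\rho-1}=O(t^{\rho})=o(t^{\kappa})$; or (ii) actually interchange the sums, note that $\sum_{j=1}^{i-1}\gamma_j e^{-\lambda m_j^i}\le C$ uniformly (telescoping, since $\gamma_j e^{-\lambda m_j^i}\asymp\lambda^{-1}(e^{-\lambda m_{j+1}^i}-e^{-\lambda m_j^i})$), leaving $\sum_{i=2}^{t-1}i^{\rho-1}=O(t^{\rho})$. This is exactly how the paper proceeds, but via a simpler decomposition: writing $\sum_i\gamma_i\bf X_j^i=\bf A^{-1}(\bf I-\bf X_j^t)$ gives $\bm\Phi_j^t=\bf A^{-1}\bf X_j^t+\bf S_j^t$ with $\bf S_j^t=\sum_{i=j}^{t-1}(\gamma_i-\gamma_j)\bf X_j^i$, so the principal term carries no $u_{t-1}$ weight and $\|\bf S_j^t\|\le Cj^{\rho-1}$ follows by route~(i). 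Lemma~\ref{lem:rho-exp} is invoked only for the principal term; the correction term uses $\rho<\kappa$ through the elementary estimate $t^{-\kappa}\sum_{j\le t}j^{\rho-1}\to 0$.
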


\begin{remark}
Lemma~\ref{lemkappa} recovers \cite[Lemma~1]{polyak1992acceleration} as the special case $\kappa = 1$.
\end{remark}

\begin{proof-of}[\thref{lemkappa}]
Modeling after \cite{polyak1992acceleration}'s proof of their Lemma 1, we define $\bf S_j^t = \sum_{i=j}^{t-1} (\gamma_i-\gamma_j)\bf X_j^i$, and we have
\begin{align}
\bm \Phi_j^t = \bf S_j^t + \bf A^{-1} \bf X_j^t.
\end{align}
We will split the proofs into two parts. In the first part, we will prove $t^{-\kappa} \sum_{j=1}^{t-1} \|\bf S_j^t \|\to 0$ and 
then in the second part we will prove $t^{-\kappa} \sum_{j=1}^{t-1} \|\bf X_j^t \| \to 0$.

\textbf{Part I.} We first prove that $t^{-\kappa} \sum_{j=1}^{t-1} \|\bf S_j^t \|\to 0$.

\par By the Part 3 of \citet[Lemma~1]{polyak1992acceleration}\footnote{We can directly use this inequality since our assumption on step-size $\gamma_t \asymp t^{-\rho}$, $0 < \rho < 1$ can meet \citet[Assumption~2.2]{polyak1992acceleration}.}, there exist some $\lambda > 0$ and $K < \infty$ such that
\begin{equation}\label{boundXexp}
    \|\bf X_j^t \| \le K \exp\left(-2\lambda\sum_{i=j}^{t-1} \gamma_i \right) = Ke^{-2\lambda m_j^t}, 
\end{equation}
where $m_{k}^{\ell}$ stands for $\sum_{i = k}^{\ell-1} \gamma_i$. Now we have
\begin{align}
   \left\|\bf S_j^t \right\| &= \left\| \sum_{i=1}^t (\gamma_i - \gamma_j) \bf X_j^i  \right \|  \nonumber\\
   &= \left\| \sum_{i=1}^t\left[\sum_{k=j}^{i-1} (\gamma_{k+1}-\gamma_k)\right] \bf X_j^i  \right \| \nonumber \\
   &\le C_0 \sum_{i=j}^t \sum_{k=j}^{i-1} k^{-\rho-1}  \exp\left(-2\lambda m^i_j\right)  \nonumber  \\
   &\le C_0 j^{-1} \sum_{i=j}^t \sum_{k=j}^{i-1} k^{-\rho}  \exp\left(-2\lambda m^i_j\right)   \nonumber \\
   &\le C_1 j^{-1} \sum_{i=j}^t m^i_j  \exp\left(-2\lambda m^i_j\right)   \nonumber \\
   &= C_1 j^{-1} \sum_{i=j}^t\frac{m_j^i e^{-2\lambda m_j^i} (m_j^i - m_j^{i-1})}{\gamma_i},\label{integralestimation}
\end{align}
where $C_{0},C_{1}$ are some positive constants.

Since the function $f_w(x) = x^\rho \exp(-w x^{1-\rho})$ is bounded on $x\in [1, \infty )$ for every $w > 0$, we have
\begin{align}
    \frac{j^{-\rho}}{\gamma_i}\exp\left(-\lambda m_j^i\right) \le C_2 i^\rho j^{-\rho} \exp(-C_3(i^{1-\rho} - j^{1-\rho})) = C_2 f_{C_3}(i)/f_{C_3}(j) \le C_4,
\end{align}
for some positive constants $C_{2}$, $C_3$ and $C_4$.
Hence, continuing upon \eqref{integralestimation},
\begin{align}
     \left\|\bf S_j^t \right\| \le C_1 C_4 j^{\rho-1}  \sum_{i=j}^t m_j^i e^{-\lambda m_j^i} (m_j^i - m_j^{i-1}).
\end{align}
Since the summation $ \sum_{i=j}^t m_j^i e^{-\lambda m_j^i} (m_j^i - m_j^{i-1})$ approximates $\int_{0}^{m_{j}^{t}} m e^{-\lambda m} \d m$, it is bounded. Hence, for some positive constant $C_{5}$,
\begin{align}
     \|\bf S_j^t \| \le C_5 j^{\rho-1},
\end{align}
which implies the desired limit
\begin{align}
\lim_{t \to \infty}    t^{-\kappa} \sum_{j=1}^{t-1} \|\bf S_j^t \| = 0.
\end{align}

\textbf{Part II.} It remains to prove that $t^{-\kappa} \sum_{j=1}^{t-1} \|\bf X_j^t \| \to 0$.

\par Combining \eqref{boundXexp} and \thref{lem:rho-exp}, we have $t^{-\kappa} \sum_{j=1}^{t-1} \|\bf X_j^t \| \to 0$. Hence the proof of this lemma is complete.
\end{proof-of}

\begin{lemma}\thlabel{lemneveu}
Given the assumption of \thref{thm:stab-linear} or \thref{thm:stab-nonlin},
\begin{align}
    \frac{\bm \xi_1 + \ldots \bm \xi_t}{t^{1/\alpha}} \xrightarrow[t\to \infty]{\mathcal D} \mu.
\end{align}
\end{lemma}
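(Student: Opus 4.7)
The strategy is to decompose $\bm\xi_i = \bm m_i + \bm\zeta_i$ and apply Slutsky's theorem. Assumption~\ref{as:dona} gives $(\bm\zeta_1 + \cdots + \bm\zeta_t)/t^{1/\alpha} \xrightarrow[t\to\infty]{\mathcal D} \mu$ directly, so the task reduces to showing that the martingale of state-dependent noise components $\bm M_t := \sum_{i=1}^t \bm m_i$ satisfies $\bm M_t/t^{1/\alpha} \to 0$ in probability.

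The martingale $\bm M_t$ has differences whose conditional second moments are bounded by $K(1+|\bm x_{i-1}|^2)$ via \eqref{eqn:state-dep-var}. Combined with the $L^p$ convergence rates of $\bm\delta_t = \bm x_t - \bm x^*$ from Theorem~\ref{thm:rate-lp}---which hold for every $p \in (1,\alpha)$ in the setting of Theorem~\ref{thm:stab-nonlin}, and for a single admissible $p$ in the setting of Theorem~\ref{thm:stab-linear}---the computation \eqref{eqn:p-state-bound} shows $\sup_i \Exp[|\bm m_i|^p] < \infty$. Lemma~\ref{lem:p-expand} then produces the $L^p$ moment estimate $\Exp[|\bm M_t|^p] \le C \sum_{i=1}^t \Exp[|\bm m_i|^p] = O(t)$.

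To convert this moment bound into the required probabilistic statement, I plan to leverage the gap between the $\sqrt t$-scale dictated by bounded-conditional-variance martingales and the strictly larger target normalization $t^{1/\alpha}$ (recall $\alpha < 2$). Concretely, I would introduce the stopping time $\tau_B := \inf\{t : |\bm x_t| > B\}$ so that on $\{\tau_B > t\}$ the stopped martingale $\bm M_{t\wedge\tau_B}$ has predictable quadratic variation bounded by $K(1+B^2)t$, whence $\Exp[|\bm M_{t\wedge\tau_B}|^2] = O(B^2 t)$. Choosing $B = B_t$ appropriately and invoking the tightness of $\{\bm x_i\}$ implied by $\sup_i \Exp[|\bm x_i|^q] < \infty$ for every $q < \alpha$ to bound $\Pr(\tau_{B_t} \le t)$ would complete the argument.

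The main technical obstacle is calibrating $B_t$ so that both the $L^2$-norm of $\bm M_{t\wedge\tau_{B_t}}/t^{1/\alpha}$ and the tail probability $\Pr(\tau_{B_t} \le t)$ vanish simultaneously: the naive union bound $\Pr(\sup_{s\le t}|\bm x_s| > B_t) \le \sum_s \Pr(|\bm x_s|>B_t)$ is typically too wasteful to reconcile the two requirements. I expect to address this either by replacing the $L^2$ step with a Lenglart-type probabilistic maximal inequality that tolerates a more aggressive truncation, or by deriving a sharper maximal estimate on $\sup_{s\le t}|\bm\delta_s|$ that exploits the contractive character of the SGD recursion near $\bm x^*$ via a suitable Lyapunov argument on $|\bm\delta_s|^p$.
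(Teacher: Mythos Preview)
Your overall strategy---decompose $\bm\xi_i = \bm\zeta_i + \bm m_i$ and reduce via Slutsky to showing $t^{-1/\alpha}\sum_{i\le t}\bm m_i \to 0$---matches the paper. The gap is in how you control the martingale part. Your first step, applying \thref{lem:p-expand} to get $\Exp[|\bm M_t|^p] = O(t)$, is too weak: it yields $\Exp[|\bm M_t/t^{1/\alpha}|^p] = O(t^{1-p/\alpha})$, which diverges because $p<\alpha$. The reason is that \thref{lem:p-expand} uses only $\sup_i \Exp[|\bm m_i|^p]<\infty$ and discards the much stronger structural information that the \emph{conditional second moments} are predictable and bounded by $K(1+|\bm x_{i-1}|^2)$. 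You then try to recover this structure through a truncation/stopping-time scheme, but as you yourself note, closing that argument is delicate.

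The paper avoids all of this by invoking a Burkholder-type inequality (the ``Remark'' on p.~151 of \cite{neveu1975discrete}): for $r\in(1,2]$,
\[
\Exp\bigl[|\bm M_t|^r\bigr] \;\le\; C\,\Exp\Bigl[\langle \bm M\rangle_t^{r/2}\Bigr]
\;=\; C\,\Exp\Bigl[\Bigl(\textstyle\sum_{i\le t}\Exp[|\bm m_i|^2\mid\mathcal F_{i-1}]\Bigr)^{r/2}\Bigr]
\;\le\; C'\,\Exp\Bigl[\Bigl(t+\textstyle\sum_{i\le t}|\bm\delta_{i-1}|^2\Bigr)^{r/2}\Bigr].
\]
Since $r/2\le 1$, subadditivity of $x\mapsto x^{r/2}$ gives the right-hand side $\le C'\bigl(t^{r/2}+\sum_{i\le t}\Exp[|\bm\delta_{i-1}|^r]\bigr)$. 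Dividing by $t^{r/\alpha}$, the first term vanishes because $\alpha<2$, and the second vanishes by the $L^r$ rates from \thref{thm:rate-lp} or \thref{thm:rate-lpalpha} once $r$ is chosen to satisfy the inequalities in the hypotheses of Theorems~\ref{thm:stab-linear}--\ref{thm:stab-nonlin} (this is precisely where those constraints on $p,q,\rho$ enter). Your Lenglart route would ultimately land on the same quantity $t^{-r/\alpha}\sum_i\Exp[|\bm\delta_{i-1}|^r]$ after bounding $\Pr(\langle\bm M\rangle_t> ct^{2/\alpha})$ via Markov at exponent $r/2$, so it is salvageable---but the direct moment inequality makes the stopping time and the maximal-inequality calibration entirely unnecessary.
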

\begin{proof} We recall the decomposition $\bm \xi_t = \bm \zeta_t +  \bm m_t$, where $\{ \bm \zeta_t \}$ are i.i.d. and $\bm \zeta_1$ is in the domain of normal attraction of an $n$-dimensional centered $\alpha$-stable distribution so that
$$ \frac{\bm \zeta_1 +\ldots+ \bm \zeta_t}{t^{1/\alpha}} \xrightarrow[t\to \infty]{\mathcal D} \mu.$$
Hence, 
it suffices to show that $t^{-1/\alpha}(\bm m_1 + \ldots + \bm m_t) \to 0$ in $L^r$, for some $r\ge 1$.

\par By \eqref{eqn:state-dep-var}, there exists a constant $C>0$ such that
\begin{equation}
     \Exp\left[ \left|\bm m_{t+1}(\bx_t)\right|^2  \mid  \mathcal F_t\right] \le K\left(1+|\bx_t|^2\right) \le K(1+2|\bx^*|^2 + 2|\bm \delta_t|^2) \le C(1+|\bm \delta_t|^2).
\end{equation}

\par Hence, by using the ``Remark'' on p.151 of \cite{neveu1975discrete} (cf. inequalities (20) of \cite{anantharam2012stochastic}), we get
\begin{align}
    \Exp\left[\left | \frac{\bm m_1 + \ldots + \bm m_t}{t^{1/\alpha}} \right |^r\right] &\le \frac{C_1}{t^{r/\alpha}} \Exp\left[\left(\sum_{i=1}^t \Exp\left[ |\bm m_i|^2 \mid \mathcal F_{i-1} \right]\right) ^{r/2}  \right] \nonumber\\
    &\le \frac{C_2}{t^{r/\alpha}} \Exp\left[  \left( \sum_{i=1}^t \left(1 + |\bm \delta_{i-1}|^2\right)  \right)^{r/2}  \right] \nonumber\\
    &\le \frac{C_2}{t^{r/\alpha}} \Exp\left[t^{r/2} + \sum_{i=1}^t |\bm\delta_{i-1}|^r \right],\label{ineq:remark}
\end{align}
where, for the last inequality, we use the fact that $(x+y)^{s}\le x^{s}+y^{s}$ for any $x,y\ge 0$, $0\le s \le 1$.
\par If the assumption of \thref{thm:stab-linear} holds, take $r = p > (\alpha + \alpha \rho)/(1 + \alpha \rho)$ in the inequalities \eqref{ineq:remark} above. Then, by \thref{thm:rate-lp}, $\Exp[|\bm \delta_t|^r] = \mathcal O (t^{-\rho (r-1)}) = o(t^{r/\alpha - 1})$.

\par If the assumption of \thref{thm:stab-nonlin} holds, take $r = q>1/\rho > \alpha / (1+\rho(\alpha - 1))$ in the inequalities \eqref{ineq:remark} above. Then by \thref{thm:rate-lpalpha}, $\Exp[|\bm \delta_t|^r] =\tilde{\mathcal{O}} (t^{-\rho r (\alpha-1)/\alpha}) = o(t^{r/\alpha - 1})$.

\par In both cases, $t^{-1/\alpha}(\bm m_1 + \ldots + \bm m_t) \to 0$ in $L^r$. The proof is complete.
\end{proof}

Finally, we are ready to prove Theorems~\ref{thm:stab-linear}
and \ref{thm:stab-nonlin}.

\begin{proof-of}[\thref{thm:stab-linear}]By \citet[Lemma 2]{polyak1992acceleration}:%
\begin{equation}\label{eqn:error-parts}
    \frac{t}{t^{1/\alpha}} \overline{\bm \delta}_t = \underbrace{\frac{1}{t^{1/\alpha}}\bf F_t \bm\delta_0}_{\bm I^{(1)}_t} - \underbrace{\frac{1}{t^{1/\alpha}} \sum_{j=1}^{t-1} \bf A^{-1} \bm\xi_j}_{\bm I^{(2)}_t} - \underbrace{\frac{1}{t^{1/\alpha}} \sum_{j=1}^{t-1} \bf W_j^t \bm\xi_j}_{\bm I^{(3)}_t},
\end{equation}
where $\bf F_t$ and $\bf W_j^t$ are deterministic matrices with uniformly bounded operator 2-norms defined as
\begin{align}
\bf F_t &= \sum_{i=0}^{t-1}\prod_{k=1}^i(\bf I-\gamma_k \bf A), \label{eqn:def-F} \\
\bf W_j^t &= \gamma_j \sum_{i=j}^{t-1}\prod_{k=j+1}^i(\bf I-\gamma_k \bf A) - \bf A^{-1}. \label{eqn:def-W}
\end{align}
We have $\bm I^{(1)}_t \to 0$ by the boundedness of $\bf F_t$. 
Next, take some $\kappa$ such that
\begin{equation}\label{eqn:take-kappa}
    \max(\rho, 1/\alpha) < \kappa \le p/\alpha.
\end{equation}
We shall prove that $\bm I^{(3)}_t \to 0$ in $L^{\alpha\kappa}$ (notice that $1< \alpha\kappa \le p < \alpha$; cf. \citet[Proof of Theorem~1]{polyak1992acceleration} where convergence in $L^2$ is proven). By \thref{thm:rate-lp}, $\sup_j \Exp[|\bm \xi_j|^p] < \infty$. Hence we can compute, by virtue of \thref{lem:p-expand}, that
\begin{align} 
\Exp\left[ \left| \bm I^{(3)}_t \right|^{\alpha\kappa} \right] &=
\Exp\left[\left |  \frac{1}{t^{1/\alpha}} \sum_{j=1}^{t-1} \bf W_j^t \bm\xi_j \right |^{\alpha\kappa} \right] \le \frac{C_0}{t^{\kappa}}  \sum_{j=1}^{t-1}\Exp\left[\left | \bf W_j^t \bm\xi_j \right |^{\alpha\kappa} \right] \nonumber\\
&\le \left(\frac{C_0}{t^{\kappa}}\sum_{j=1}^{t-1} \left\|\bf W_j^t \right\|^{\alpha\kappa}\right) \, \sup_{j} \Exp\left[\left|\bm \xi_j\right|^{\alpha\kappa}\right] \le \left(\frac{C_0}{t^{\kappa}}\sum_{j=1}^{t-1} \left\|\bf W_j^t \right\|\right) \, \sup_{j} \Exp\left[\left|\bm \xi_j\right|^{\alpha\kappa}\right] \nonumber\\
&\le \frac{C_1}{t^{\kappa}}\sum_{j=1}^{t-1} \left\|\bf W_j^t \right\|.
\end{align}
Notice that the matrices $\bf W_j^t$ defined above correspond to $-\bm \Phi_j^t$ in \thref{lemkappa}. This infers that $\Exp\left[ | \bm I^{(3)}_t |^{\alpha\kappa} \right] \le \frac{K_1}{t^{\kappa}}\sum_{j=1}^{t-1} \|\bf W_j^t \| \to 0$ as $t \to \infty$.
 
\par Finally, \thref{lemneveu} states that $\bm I^{(2)}_t$ converges weakly to an $\alpha$-stable distribution. Hence we conclude the proof.
\end{proof-of}

\begin{proof-of}[\thref{thm:stab-nonlin}]Denote by $\bf A$ the Hessian matrix $\grad \bm R(\bx^*) = \grad^2 f(\bx^*)$. Consider a corresponding linear SA process with the same noise,
\begin{equation}\label{eqn:linear-SA-same-noise}
   \bx_{t+1}^1 = \bx_{t}^1 - \step_{t+1} \left( \bf A(\bx_{t}^1 - \bx^*) + \bxi_{t+1}(\bx_t)\right),
\end{equation}
with $\bx_0 ^1 = \bx_0$. We further define $\bm \delta_t^1 = \bx_t ^1 - \bx^*$ and the averaging process $\overline{\bm \delta}_t^1 = (\bm \delta_0^1 + \ldots + \bm \delta_{t-1}^1)/t$.

\par \textbf{Part I.} We first prove that $t^{1-1/\alpha}\left(\bm {\overline\delta}^1_t - \bm {\overline\delta}_t \right) \to 0$ almost surely.

\par By \eqref{eqn:error-parts}, we have
\begin{equation}\label{eqn:delta1-avg}
    \frac{t}{t^{1/\alpha}} \overline{\bm \delta}^1_t = \frac{1}{t^{1/\alpha}}\bf F_t \bm\delta_0 - \frac{1}{t^{1/\alpha}} \sum_{j=1}^{t-1} \left(\bf A^{-1}+ \bf W_j^t\right)\bm\xi_j,
\end{equation}
where the matrices $\bf F_t$ and $\bf W_j^t$ are defined back in \eqref{eqn:def-F} and \eqref{eqn:def-W}. For the non-linear process \eqref{eqn:SA}, it can be viewed \emph{as if it is a linear process with the $j$-th noise term being $\bm \xi_j + \bm R (\bx_{j-1}) - \bf A \bm \delta_{j-1}$}. Hence by \eqref{eqn:error-parts}, we have
\begin{equation}\label{eqn:delta-avg}
    \frac{t}{t^{1/\alpha}} \overline{\bm \delta}_t = \frac{1}{t^{1/\alpha}}\bf F_t \bm\delta_0 - \frac{1}{t^{1/\alpha}} \sum_{j=1}^{t-1} \left(\bf A^{-1}+ \bf W_j^t\right)\left(\bm \xi_j + \bm R (\bx_{j-1}) - \bf A \bm \delta_{j-1}\right).
\end{equation}
Combining \eqref{eqn:delta1-avg} and \eqref{eqn:delta-avg} yields the difference (cf. Part 4 of \citet[Proof of Theorem~2]{polyak1992acceleration})
\begin{equation}\label{eqn:def-lin-nonlin}
 \frac{t}{t^{1/\alpha}} \left(\bm {\overline\delta}^1_t - \bm {\overline\delta}_t \right) = \frac{1}{t^{1/\alpha}}\sum_{j=1}^{t-1}\left(\bf A^{-1} + \bf W^t_j\right)\left(\bm R (\bm x_{j-1}) - \bf A \bm \delta _{j-1}\right).
\end{equation}
We also recall the assumption that $|\bm R (\bm x_j) - \bf A \bm \delta _j| \le K|\bm \delta_j|^q$. Hence, it suffices to show the following term vanishes
almost surely as $t\rightarrow\infty$:
\begin{align}
J_t = \frac{1}{t^{1/\alpha}}\sum_{j=1}^{t-1}|\bm \delta_j|^{q}.
\end{align}
To show this, first by our calculation of the rate of convergence in \thref{thm:rate-lpalpha},
\begin{equation}
    \Exp\left[\sum_{j=1}^{t-1} \frac{1}{j^{1/\alpha}}|\bm \delta_j|^{q}  \right] = \sum_{j=1}^{t-1} \tilde{\mathcal{O}}\left(j^{-\rho q\frac{\alpha - 1}{\alpha} - \frac{1}{\alpha}}\right) = \mathcal{O}(1).
\end{equation}
The last equality holds since $-\rho q\frac{\alpha - 1}{\alpha} - \frac{1}{\alpha} < -1$.
Hence, we have
\begin{equation}\label{eqn:kron}
    \Pr\left[\sum_{j=1}^{t-1} \frac{1}{j^{1/\alpha}}|\bm \delta_j|^{q} < \infty  \right] = 1.
\end{equation}
By Kronecker's lemma, \eqref{eqn:kron} implies that $\Pr[\lim_{t\to\infty}J_t = 0] = 1$. This further implies that the left hand side of \eqref{eqn:def-lin-nonlin}, $t^{1-1/\alpha}\left(\bm {\overline\delta}^1_t - \bm {\overline\delta}_t \right)$, converges to 0 almost surely.

\par \textbf{Part II.} It remains to show that $t^{1-1/\alpha}\bm {\overline\delta}^1_t$ converges weakly to an $\alpha$-stable distribution.

\par Define $\bm {\overline x}_t^1 = (\bx_0^1 + \ldots + \bx_{t-1}^1)/t$. Since $t^{1-1/\alpha}\left(\bm {\overline x}^1_t - \bm {\overline x}_t \right) = t^{1-1/\alpha}\left(\bm {\overline\delta}^1_t - \bm {\overline\delta}_t \right) \to 0$ almost surely, it follows \emph{a fortiori} that $\bm {\overline x}^1_t - \bm {\overline x}_t \to 0$ almost surely. Hence $\bx_t ^1 - \bx_t \to 0$ almost surely, due to the well-known theorem that a real-valued sequence converges to zero if and only if the average sequence converges to zero.

\par Therefore, for the noise decomposition $\bxi_{t+1} (\bx_t) = \bm \zeta_{t+1} + \bm m_{t+1}(\bx_t)$, the state-dependent component $\bm m_{t+1}(\bx_t)$ satisfies not only \eqref{eqn:state-dep-var}, i.e.,
 \begin{equation}
     \Exp\left[ \left|\bm m_{t+1}(\bx_t)\right|^2  \mid  \mathcal F_t\right] \le K\left(1+|\bx_t|^2\right),
 \end{equation}
 but also
 \begin{equation}
     \Exp\left[ \left|\bm m_{t+1}(\bx_t)\right|^2  \mid  \mathcal F_t\right] \le C\left(1+|\bx_t ^1|^2\right).
 \end{equation}
Hence, combining the discussion above and \thref{lemneveu}, we know that the linear recursion \eqref{eqn:linear-SA-same-noise} defines a process that satisfies \thref{thm:stab-linear}. (The only difference is that $\kappa$, instead of \eqref{eqn:take-kappa}, can be taken from the range $ (\rho,1)$ under the assumption of the current theorem, since by \thref{thm:rate-lp}, $\sup_{t\in\mathbb N^+}\Exp[|\bm \xi _t|^p] < \infty$ for every $1\le p<\alpha$.) It then follows from \thref{thm:stab-linear} that $t^{1-1/\alpha}\bm {\overline\delta}^1_t$ converges weakly to an $\alpha$-stable distribution. 

\par The proof is complete.
\end{proof-of}

\section{Additional Technical Background}\label{sec:more-prelim}

\subsection{Properties of $\alpha$-Stable Distributions}
An $\alpha$-stable distributed random variable $X$ is 
denoted by 
$X \sim \mathcal{S}_{\alpha}(\sigma,\theta,\mu)$, where $\alpha\in(0,2]$ is the \emph{tail-index}, $\theta\in[-1,1]$ is the \emph{skewness} parameter, $\sigma\ge 0$ is the \emph{scale} parameter, and $\mu\in\mathbb{R}$ is called the \emph{location} parameter. An $\alpha$-stable random variable $X$
is uniquely characterized by its 
characteristic function:
$\Exp\left[\exp\left(iu X\right)\right] 
=
e^{-\sigma^{\alpha}|u|^{\alpha}(1-i\theta\text{sgn}(u)\tan(\frac{\pi\alpha}{2}))+i\mu u}$, if $\alpha\neq 1$
and 
$\Exp\left[\exp\left(iu X\right)\right] 
=e^{-\sigma|u|(1+i\theta\frac{2}{\pi}\text{sgn}(u)\log|u|)+i\mu u}$,
if $\alpha=1$,
for any $u\in\mathbb{R}$.
The mean of $X$ coincides with $\mu$ if $\alpha>1$,
and otherwise the mean of $X$ is undefined.
The skewness parameter $\theta$ is a measure of asymmetry. We say that $X$ follows a \emph{symmetric} $\alpha$-stable distribution
denoted as
$\mathcal{S}\alpha\mathcal{S}(\sigma)=\mathcal{S}_{\alpha}(\sigma,0,0)$ 
if $\theta=0$ (and $\mu=0$). 
The tail-index parameter $\alpha\in(0,2]$ determines
the tail thickness of the distribution, 
and $\sigma>0$ measures the spread
of $X$ around its mode. 
When $\alpha<2$, $\alpha$-stable distributions 
have heavy tails so that their moments
are finite only up to the order $\alpha$.
More precisely, let $X\sim\mathcal{S}_{\alpha}(\sigma,\theta,\mu)$
with $0<\alpha<2$.
Then $\Exp[|X|^{p}]<\infty$ for any $0<p<\alpha$
and $\Exp[|X|^{p}]=\infty$
for any $p\ge\alpha$,
which implies infinite variance (see e.g. \cite[Property~1.2.16]{ST1994}).
When $0<\alpha<2$, the left tail and right tail of $X$
are described by the formulas:
\begin{align}
\lim_{x\rightarrow\infty}x^{\alpha}\Pr(X>x)
=\frac{1+\theta}{2}C_{\alpha}\sigma^{\alpha},
\qquad
\lim_{x\rightarrow\infty}x^{\alpha}\Pr(X<-x)
=\frac{1-\theta}{2}C_{\alpha}\sigma^{\alpha},
\end{align}
where $C_{\alpha}:=(1-\alpha)/(\Gamma(2-\alpha)\cos(\pi\alpha/2))$
if $\alpha\neq 1$ and $C_{\alpha}:=2/\pi$ if $\alpha=1$,
(see e.g. \cite[Property~1.2.15]{ST1994}).
The family of $\alpha$-stable distributions include
normal, L\'{e}vy and Cauchy distributions as special cases, 
and can be used to model many complex stochastic phenomena \citep{Sarafrazi2019, Fiche2013, Farsad2015}.

\subsection{Domains of Attraction of Stable Distributions}
 
Let $X_{i}$ be an i.i.d. sequence with a common distribution whose distribution function
is denoted as $F$, and let $S_{n}:=X_{1}+X_{2}+\cdots+X_{n}$. Suppose that for some normalizing constants
$a_{n}>0$ and $b_{n}$, the sequence $S_{n}/a_{n}-b_{n}$ has a non-degenerate limit distribution with distribution function $G$,
i.e. 
\begin{equation}\label{eqn:relation}
\lim_{n\rightarrow\infty}\Pr(S_{n}/a_{n}-b_{n}\le x)=G(x),
\end{equation}
for all continuity points $x$ of $G$,
then such limit distributions $G$ are stable distributions and the set of distribution functions $F$
such that $S_{n}/a_{n}-b_{n}$ converges to a particular stable distribution is called its \emph{domain of attraction}.

Next, let us provide a sufficient and necessary condition for being in the domain of attraction
of a stable distribution. 
The class of distribution functions $F$ for which $S_{n}/a_{n}-b_{n}$ converges to $\mathcal{S}\alpha\mathcal{S}(\sigma)$
is called the $\alpha$-stable domain of attraction, and we denote it as $F\in D_{\alpha}$. Before we proceed, let us recall that a positive measurable function $f$ is \emph{regularly varying} if there exists a constant $\gamma\in\mathbb{R}$
such that
$\lim_{t\rightarrow\infty}\frac{f(tx)}{f(t)}=x^{\gamma}$, for every $x>0$.
In this case, we denote $f\in RV_{\gamma}$, and we say a function $f$ is slowly varying
if $f\in RV_{0}$. 

Define the characteristic functions
$\phi(u):=\int_{-\infty}^{\infty}e^{iux}dF(x)$
and
$\psi(u):=\int_{-\infty}^{\infty}e^{iux}dG(x)$,
and also define
$\lambda(u):=\phi(1/u)$ and $g(u):=\psi(1/u)$
for $u\in[-\infty,\infty]\backslash\{0\}$. 
We also denote $U(x):=\text{Re}\lambda(x)$ and $V(x):=\text{Im}\lambda(x)$.
By L\'{e}vy's continuity theorem for characteristic functions (see e.g. \citet[Chapter XV.3]{Feller}),
the convergence in \eqref{eqn:relation} is equivalent to $\lim_{n\rightarrow\infty}\exp(-ib_{n}/u)\lambda^{n}(a_{n}u)=g(u)$,
$u\in[-\infty,\infty]\backslash\{0\}$ uniformly on neighborhoods of $\pm\infty$.
Based on this, one can show that (see e.g. ) if \eqref{eqn:relation} holds, 
then $|g(u)|^{2}=\exp(-c|u|^{-\alpha})$ for some $\alpha\in(0,2]$ and $c>0$
and moreover $-\log|\lambda|\in RV_{-\alpha}$, i.e. $-\log|\lambda|$ is regularly varying
with index $-\alpha$. Next, we state a sufficient and necessary condition for being in the $\alpha$-stable domain of attraction.

\begin{theorem}[\cite{Geluk}, Theorem~1]
Suppose $0<\alpha<2$. Every $\alpha$-stable random variable $X$ has a characteristic function of the form:
\begin{equation}
\Exp\left[\exp\left(iuX\right)\right]=\exp\left(-\left\{|u|^{\alpha}+iu(2p-1)\left\{(1-\alpha)\tan(\alpha\pi/2)\right\}\frac{|u|^{\alpha-1}-1}{\alpha-1}\right\}\right),
\end{equation}
for some $0\le p\le 1$ with $(1-\alpha)\tan(\pi/2)$ defined to be $2/\pi$ at $\alpha=1$. 
The following statements are equivalent:

(i) $F\in D_{\alpha}$.

(ii) $1-F(x)+F(-x)\in RV_{-\alpha}$ and there exists a constant $p\in[0,1]$ such that
\begin{equation}
\lim_{x\rightarrow\infty}\frac{1-F(x)}{1-F(x)+F(-x)}=p.
\end{equation}

(iii) $1-U(x)\in RV_{-\alpha}$ and there exists a constant $p\in[0,1]$ such that
\begin{equation}
\lim_{x\rightarrow\infty}\frac{xuV(xu)-xV(x)}{x(1-U(x))}=(2p-1)(1-\alpha)\tan\left(\frac{\alpha\pi}{2}\right)\frac{|u|^{1-\alpha}-1}{1-\alpha}, \qquad u\in\mathbb{R}\backslash\{0\}.
\end{equation}
\end{theorem}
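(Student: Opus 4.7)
The plan is to prove the theorem by establishing the cycle (i) $\Rightarrow$ (ii) $\Rightarrow$ (iii) $\Rightarrow$ (i), with the bridge between (ii) and (iii) being the main Abelian/Tauberian content.

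First, for (i) $\Rightarrow$ (ii), I would use L\'evy's continuity theorem together with the convergence of types. Under (i), there exist $a_n>0$ and $b_n$ with $\exp(-ib_n u)\phi^n(u/a_n)\to \psi(u)$ uniformly near $0$, where $\psi$ is the characteristic function of an $\alpha$-stable law; taking logs and using $\log \phi(u) = \log(1 - (1-\phi(u)))$ pointwise near zero, one reads off that $n(1-\phi(u/a_n))$ must tend to $c|u|^\alpha$ times a bounded factor. The classical Gnedenko-Kolmogorov characterization of tail regularity (see e.g.\ \citet[Chapter XVII]{Feller}) then forces $1-F(x)+F(-x)\in RV_{-\alpha}$ together with the tail-balance limit $\lim_{x\to\infty}\frac{1-F(x)}{1-F(x)+F(-x)}=p$ for some $p\in[0,1]$. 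The sequence $a_n$ must itself be regularly varying with index $1/\alpha$, which is what makes the inversion possible.

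Next, for (ii) $\Rightarrow$ (iii), the core identity to exploit is
\begin{equation}
1-U(x) = \int_{-\infty}^{\infty}\bigl(1-\cos(y/x)\bigr)\,dF(y),\qquad V(x)=\int_{-\infty}^{\infty}\sin(y/x)\,dF(y).
\end{equation}
Splitting each integral at $|y|\asymp x$ and integrating by parts reduces the behavior of $1-U(x)$ and $V(x)$ as $x\to\infty$ to tail integrals of $1-F+F(-\cdot)$ and $1-F-F(-\cdot)$ respectively. Karamata's theorem for regularly varying functions then converts the tail regular variation asserted in (ii) into $1-U(x)\in RV_{-\alpha}$ and produces the explicit limit formula in (iii) involving $(2p-1)(1-\alpha)\tan(\alpha\pi/2)$, with the $\alpha=1$ case handled separately via the logarithmic correction. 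The direction (iii) $\Rightarrow$ (ii) runs this same correspondence backwards, using a Tauberian theorem (monotone density / de Haan) to recover tail regular variation from regular variation of $1-U$.

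Finally, for (iii) $\Rightarrow$ (i), choose $a_n\to\infty$ by $n(1-U(a_n))\to 1$, which is possible precisely because $1-U\in RV_{-\alpha}$ is continuous and monotone near infinity after smoothing. Then $n\log\phi(u/a_n)= -n(1-U(a_n/u)) + in\,V(a_n/u) + o(1)$, and the hypothesis in (iii) together with regular variation shows this tends to the characteristic exponent of the $\alpha$-stable law with skewness $2p-1$ written in the theorem. A choice of $b_n$ absorbs the imaginary drift when $\alpha=1$. L\'evy's continuity theorem closes the loop.

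The main obstacle is the Abelian/Tauberian calculation in (ii) $\Leftrightarrow$ (iii): one must carry out the splitting of $\int(1-\cos(y/x))\,dF(y)$ carefully enough to both isolate the leading $|u|^\alpha$ behavior and track the skew-dependent constants, and separately handle $\alpha=1$, where an additional slowly varying (logarithmic) term appears in $V$. Technically, de Haan's theory of $\Pi$-variation (or Bingham-Goldie-Teugels) is what makes the $\alpha=1$ case go through; everything else reduces to Karamata's theorem applied in standard fashion.
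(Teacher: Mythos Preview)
The paper does not prove this theorem at all: it is quoted verbatim as \cite[Theorem~1]{Geluk} in the appendix on technical background, purely as a reference result to illustrate characterizations of the domain of attraction of an $\alpha$-stable law. There is therefore no ``paper's own proof'' to compare your proposal against.

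That said, your sketch is the standard route one finds in the regular-variation literature (Feller, Bingham--Goldie--Teugels, Geluk--de Haan): (i)$\Leftrightarrow$(ii) via the Gnedenko--Kolmogorov tail characterization, and (ii)$\Leftrightarrow$(iii) via the Abelian/Tauberian correspondence between tail regular variation of $F$ and regular variation of $1-U$, with the skew constant extracted from the sine integral and the $\alpha=1$ case handled through $\Pi$-variation. This is exactly the machinery Geluk invokes, so your outline is faithful to the original source even though the present paper contributes nothing to the proof itself. The only caveat is that your write-up is a high-level plan rather than a proof: the delicate part---tracking the constant $(2p-1)(1-\alpha)\tan(\alpha\pi/2)$ through the integration-by-parts and Karamata steps, and justifying the Tauberian inversion for the imaginary part $V$---is asserted rather than carried out, and would need several pages to make rigorous.
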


Furthermore, \cite[Theorem~1]{Geluk} showed that if any of (i), (ii), (iii) holds, then $\lim_{x\rightarrow\infty}\frac{1-U(x)}{1-F(x)+F(-x)}=\Gamma(1-\alpha)\cos(\alpha\pi/2)$
and $\lim_{x\rightarrow\infty}\frac{V(x)-x^{-1}\int_{0}^{x}(1-F(y)-F(-y))\d y}{1-F(x)+F(-x)}=(2p-1)\left(\Gamma(1-\alpha)\sin(\alpha\pi/2)-\frac{1}{1-\alpha}\right)$.

Let us illustrate \cite[Theorem~1]{Geluk} with an example of Pareto distribution, which is a power-law distribution
widely applied in various fields. 
A random variable $X$ is said to follow a Pareto distribution (of type I)
if there exists some $c>0$ such that $\Pr(X>x)=(x/c)^{-\alpha}$
for any $x\ge c$ and $\Pr(X>x)=1$ for any $x<c$. 
In this case, $F(x)=1-(x/c)^{-\alpha}$ for any $x\ge c$
and $F(x)=0$ for any $x<c$. 
It follows that 
$1-F(x)+F(-x)\in RV_{-\alpha}$ and 
$\lim_{x\rightarrow\infty}\frac{1-F(x)}{1-F(x)+F(-x)}=1$.
Therefore, $F\in D_{\alpha}$ and the Pareto distribution 
is in the $\alpha$-stable domain of attraction.

When the tail-index $\alpha\in(0,2)$, the logarithm
of the characteristic function (i.e. $\log\mathbb{E}\left[e^{iuX}\right]$) of an $\alpha$-stable random variable $X$
is of the form (see \cite[equation (12), page 168]{gnedenko1968limit}):
\begin{equation}\label{eqn:characterized}
i\gamma u+c_{1}\int_{-\infty}^{0}\left[e^{iux}-1-\frac{iux}{1+x^{2}}\right]\frac{\d x}{|x|^{1+\alpha}}
+c_{2}\int_{0}^{\infty}\left[e^{iux}-1-\frac{iux}{1+x^{2}}\right]\frac{\d x}{x^{1+\alpha}},
\end{equation}
where $c_{1},c_{2}\ge 0$ and $\gamma\in\mathbb{R}$.
Since the characteristic function uniquely characterizes a probability distribution,
the triplet $(c_{1},c_{2},\alpha)$ uniquely determines an $\alpha$-stable
law up to a constant shift $\gamma\in\mathbb{R}$ when $0<\alpha<2$.
\cite[Theorem~2, page 175]{gnedenko1968limit} provides
another sufficient and necessary condition for being
in the domain of attraction of an $\alpha$-stable distribution,
which complements \cite[Theorem~1]{Geluk}.
Suppose $0<\alpha<2$. 
Then, the distribution function $F(x)$ belongs to the domain
of attraction of an $\alpha$-stable distribution
if and only if the following conditions hold:
(i) $\lim_{x\rightarrow\infty}\frac{F(-x)}{1-F(x)}=\frac{c_{1}}{c_{2}}$.
(ii) For every constant $\kappa>0$, $\lim_{x\rightarrow\infty}\frac{1-F(x)+F(-x)}{1-F(\kappa x)+F(-\kappa x)}=\kappa^{\alpha}$.
In the case of a Pareto distribution (of type I), 
for some $c>0$,
we have $F(x)=1-(x/c)^{-\alpha}$ for any $x\ge c$
and $F(x)=0$ for any $x<c$. 
Then we can check that  $\lim_{x\rightarrow\infty}\frac{F(-x)}{1-F(x)}=0$
and for every constant $\kappa>0$, $\lim_{x\rightarrow\infty}\frac{1-F(x)+F(-x)}{1-F(\kappa x)+F(-\kappa x)}=\lim_{x\rightarrow\infty}\frac{(x/c)^{-\alpha}}{(\kappa x/c)^{-\alpha}}=\kappa^{\alpha}$.
Thus, the Pareto distribution
belongs to the domain
of attraction of an $\alpha$-stable distribution.

Finally, let us provide a sufficient and necessary condition for being in the domain of normal attraction
of a stable distribution. 

\begin{theorem}[\cite{gnedenko1968limit}, Theorem~5, page 181]
Suppose $0<\alpha<2$. The distribution function $F(x)$ belongs to the domain
of attraction of an $\alpha$-stable distribution characterized by \eqref{eqn:characterized}
if and only if 
\begin{align}
&F(x)=(c_{1}a^{\alpha}+\alpha_{1}(x))\frac{1}{|x|^{\alpha}},\qquad\text{for $x<0$},\label{eqn:F:1}
\\
&F(x)=1-(c_{2}a^{\alpha}+\alpha_{2}(x))\frac{1}{x^{\alpha}},\qquad\text{for $x>0$},\label{eqn:F:2}
\end{align}
where $a>0$ is a positive constant and $\alpha_{1}(x),\alpha_{2}(x)$ are functions
satisfying $\lim_{x\rightarrow-\infty}\alpha_{1}(x)=\lim_{x\rightarrow\infty}\alpha_{2}(x)=0$.
Indeed, the constant $a$ in \eqref{eqn:a}, \eqref{eqn:F:1} and \eqref{eqn:F:2} is the same.
\end{theorem}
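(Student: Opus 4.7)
The plan is to translate the statement into an asymptotic expansion of the characteristic function $\phi(u)\coloneqq\int e^{iux}\,\d F(x)$ near the origin, then match it against the L\'evy exponent in \eqref{eqn:characterized}. By L\'evy's continuity theorem, \eqref{eqn:a} is equivalent to the convergence of $n\log\phi\bigl(u/(an^{1/\alpha})\bigr)+iub_n$ to the right-hand side of \eqref{eqn:characterized}. Since $\phi(t)\to 1$ as $t\to 0$, we have $\log\phi(t)=(\phi(t)-1)+O((\phi(t)-1)^{2})$ with the quadratic remainder turning out to be $o(|t|^{\alpha})$ under the leading order we will derive, so the task reduces to computing the asymptotics of $n(\phi(t_n)-1)$ with $t_n=u/(an^{1/\alpha})$.

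For sufficiency, I would write
\begin{equation*}
\phi(t)-1 \;=\; it\int\frac{x}{1+x^{2}}\,\d F(x) \;+\; \int\Bigl(e^{itx}-1-\frac{itx}{1+x^{2}}\Bigr)\,\d F(x),
\end{equation*}
absorb the first term into the centering $b_n$, and split the second integral over the two half-lines. On $(0,\infty)$, integration by parts converts $\d F(x)$ into $-\,\d(1-F(x))$; the substitution $y=|t|x$ combined with \eqref{eqn:F:2} produces exactly the shape of the $c_{2}$-integral in \eqref{eqn:characterized} with prefactor $|t|^{\alpha}c_{2}a^{\alpha}$, plus a remainder driven by $\alpha_{2}$. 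Multiplying by $n$ and using $n|t_{n}|^{\alpha}=|u|^{\alpha}/a^{\alpha}$ yields the $c_{2}$-term; the parallel treatment on $(-\infty,0)$ using \eqref{eqn:F:1} yields the $c_{1}$-term. The necessity direction runs this argument in reverse via the uniqueness of the L\'evy--Khintchine representation: if the normalized sums converge, the measures $\nu_{n}(B)\coloneqq n\,\mathbb P\bigl(X_{1}/(an^{1/\alpha})\in B\bigr)$ must converge vaguely on $\mathbb R\setminus\{0\}$ to $(c_{1}\mathbbm{1}_{x<0}+c_{2}\mathbbm{1}_{x>0})|x|^{-1-\alpha}\,\d x$; evaluating on $(x,\infty)$ and $(-\infty,-x)$ gives $x^{\alpha}n(1-F(an^{1/\alpha}x))\to c_{2}a^{\alpha}$ along the scaling sequence, and monotonicity of $F$ upgrades this to convergence along all $x\to\infty$, delivering \eqref{eqn:F:2}; \eqref{eqn:F:1} is symmetric.

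The main technical obstacle is showing that the error contributed by the correction functions $\alpha_{1},\alpha_{2}$ is genuinely $o(|t|^{\alpha})$ using only that $\alpha_{i}(x)\to 0$ as $|x|\to\infty$, with no quantitative rate. I would handle this with a two-scale cutoff at a threshold $R_{t}\to\infty$ satisfying $|t|R_{t}\to 0$: on $[R_{0},R_{t}]$ one exploits the uniform tail smallness $\sup_{x\ge R_{0}}|\alpha_{i}(x)|\to 0$ together with the pointwise bound $|e^{itx}-1-itx/(1+x^{2})|\lesssim |tx|^{2}\wedge 1$, while on $[R_{t},\infty)$ one uses the integrability provided by the leading $|x|^{-\alpha}$ decay. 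A secondary nuisance is the $\alpha=1$ case, where the L\'evy exponent carries a $y\log|y|$ correction and the centering $b_n$ must absorb an additional $\log n$ term; I would settle $\alpha\neq 1$ first and handle $\alpha=1$ by a direct computation that tracks this extra logarithm.
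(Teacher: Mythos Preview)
The paper does not give its own proof of this theorem: it is quoted verbatim as background from \cite{gnedenko1968limit} (Theorem~5, page~181) and immediately illustrated with the Pareto example, so there is no in-paper argument to compare against. Your sketch is in fact the classical route taken in Gnedenko--Kolmogorov and related treatments: reduce \eqref{eqn:a} via L\'evy's continuity theorem to the small-$t$ expansion of $\phi(t)-1$, recover the L\'evy measure $c_{1}|x|^{-1-\alpha}\mathbbm{1}_{x<0}+c_{2}x^{-1-\alpha}\mathbbm{1}_{x>0}$ from the tails \eqref{eqn:F:1}--\eqref{eqn:F:2} for sufficiency, and read off the tails from vague convergence of the rescaled laws for necessity.

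Two points deserve tightening. First, in the necessity direction you obtain $x^{\alpha}n\bigl(1-F(an^{1/\alpha}x)\bigr)\to c_{2}a^{\alpha}$ only along the lattice $r=an^{1/\alpha}x$ for each fixed $x>0$; to upgrade this to $r^{\alpha}(1-F(r))\to c_{2}a^{\alpha}$ as $r\to\infty$ you must sandwich an arbitrary $r$ between consecutive lattice points and use both the monotonicity of $1-F$ and the fact that $(n+1)/n\to 1$ --- this is routine but should be stated, since otherwise one only gets the regular-variation conclusion rather than the sharper normal-attraction form. Second, your two-scale cutoff for the $\alpha_{i}$ remainder is the right mechanism, but the bound $|e^{itx}-1-itx/(1+x^{2})|\lesssim |tx|^{2}\wedge 1$ is not quite enough on the inner region when $\alpha<1$, because the compensator $itx/(1+x^{2})$ is not the full linear term; you will need to track the difference $itx - itx/(1+x^{2})=itx^{3}/(1+x^{2})$ separately and fold it into the centering sequence $b_{n}$. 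With these two refinements the argument goes through.
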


In the case of a Pareto distribution (of type I), 
for some $c>0$,
we have $F(x)=1-(x/c)^{-\alpha}$ for any $x\ge c$
and $F(x)=0$ for any $x<c$. 
Then we can check that \eqref{eqn:F:1} and \eqref{eqn:F:2} hold
with $c_{1}=0$, $\alpha_{1}(x)\equiv 0$, $c_{2}=1$, $\alpha_{2}(x)\equiv 0$ and $a=c$.
Thus, the Pareto distribution
belongs to the domain
of normal attraction of an $\alpha$-stable distribution.

\end{document}